\documentclass[10pt, leqno, a4paper]{amsart}

\usepackage{amsmath, amsthm}
\usepackage{mathrsfs}    
\usepackage{amssymb}

\usepackage{graphicx,color}

\usepackage[colorlinks]{hyperref}
\hypersetup{linkcolor=blue,citecolor=blue,filecolor=black,urlcolor=blue}

\usepackage{verbatim} 

\usepackage[showonlyrefs]{mathtools}
\mathtoolsset{showonlyrefs=true}

\usepackage[sort,nocompress]{cite}

\newtheorem{theorem}{Theorem}[section]
\newtheorem{proposition}[theorem]{Proposition}
\newtheorem{corollary}[theorem]{Corollary}
\newtheorem{lemma}[theorem]{Lemma}

\theoremstyle{definition}
\newtheorem{definition}[theorem]{Definition}
\newtheorem{remark}[theorem]{Remark}

\usepackage[
  hmarginratio={1:1},     
  vmarginratio={1:1},     
  textwidth=15cm,        
  textheight=20cm,  
  heightrounded,          
]{geometry}

\numberwithin{equation}{section}


\def\eps{\varepsilon}
\def\vphi{\vphi}

\def\Ecal{\mathcal{E}}

\def\Kcal{\mathcal{K}}

\def\Mcal{\mathcal{M}}
\def\Ncal{\mathcal{N}}

\def\Hcal{\mathcal{H}}

\def\N{\mathbb{N}}

\def\R{\mathbb{R}}

\def\S{\mathbb{S}}

\def\hbar{\bar{h}}

\def\leq{\leqslant}
\def\geq{\geqslant}

\newcommand{\pa}{\partial}

\newcommand{\rst}[1]{\ensuremath{{\mathbin |}%
\raise-.5ex\hbox{$#1$}}}
\newcommand{\mf}[1]{\mathbf{#1}}

\newcommand{\loc}{\mathrm{loc}}

\DeclareMathOperator{\rad}{rad}

\definecolor{verde}{rgb}{0,0.35,0.1} 
\definecolor{rosso}{rgb}{0.7,0,0}
\definecolor{blue}{rgb}{0,0,1}
\definecolor{viola}{rgb}{0.6,0,0.4}


\begin{document}

\title[Existence and symmetry of les for Schr\"odinger systems]{New existence and symmetry results for least energy positive solutions of Schr\"odinger systems with mixed competition and cooperation terms}

\author[N. Soave]{Nicola Soave}\thanks{}
\address{Nicola Soave \newline \indent
Mathematisches Institut, Justus-Liebig-Universit\"at Giessen, \newline \indent
Arndtstrasse 2, 35392 Giessen (Germany)}
\email{nicola.soave@gmail.com; nicola.soave@math.uni-giessen.de.}

\author[H. Tavares]{Hugo Tavares}\thanks{}
\address{Hugo Tavares \newline \indent  Center for Mathematical Analysis, Geometry and Dynamical Systems \newline \indent Mathematics Department \newline \indent Instituto Superior T\'ecnico, Universidade de Lisboa \newline \indent Av. Rovisco Pais, 1049-001 Lisboa, Portugal}
\email{htavares@math.ist.utl.pt}

\keywords{Competitive and Cooperative systems, Foliated Schwarz symmetry, Least energy positive solutions,  Nehari manifold, Positive solutions, Schr\"odinger cubic systems, }

\thanks{{Acknowlegments.}  H. Tavares is supported by Funda\c c\~ao para a Ci\^encia e Tecnologia through the program \textit{Investigador FCT} and the project PEst-OE/EEI/LA0009/2013. N. Soave and H. Tavares are partially supported through the project ERC Advanced Grant  2013 n. 339958 ``Complex Patterns for Strongly Interacting Dynamical Systems - COMPAT''}
\subjclass[2010]{35J50 (primary) and 35B06, 35B09, 35J47 (secondary)	}

\date{\today}

\begin{abstract}
In this paper we focus on existence and symmetry properties of solutions to the cubic Schr\"odinger system
\[
-\Delta u_i +\lambda_i u_i = \sum_{j=1}^d \beta_{ij} u_j^2 u_i \quad   \text{in $\Omega \subset \R^N$},\qquad i=1,\dots d
\]
where $d\geq 2$, $\lambda_i,\beta_{ii}>0$, $\beta_{ij}=\beta_{ji}\in \R$ for $j\neq i$, $N=2,3$. The underlying domain $\Omega$ is either bounded or the whole space, and $u_i\in H^1_0(\Omega)$ or $u_i\in H^1_{\rad}(\R^N)$ respectively. We establish new existence and symmetry results for least energy positive solutions in the case of mixed cooperation and competition coefficients, as well as in the purely cooperative case.
\end{abstract}

\maketitle

\section{Introduction}

The existence and the qualitative description of \emph{least energy solutions} to the nonlinear elliptic system
\begin{equation}\label{system 2 comp}
\begin{cases}
-\Delta u + \lambda_1 u = \mu_1 u^3 + \beta u v^2 \\
-\Delta v + \lambda_2 v = \mu_2 v^3 + \beta u^2 v \\
u, v \in H_0^1(\Omega),
\end{cases}  \quad \text{with $\Omega \subset \R^N$ or $\Omega=\R^N$, and $N=2,3$},
\end{equation}
have attracted considerable attention in the last ten years, starting from the seminal paper \cite{LinWei} by T.-C. Lin and J. Wei. Collecting all the results contained in several contributions, it is possible to obtain an exhaustive picture of the problem, see the forthcoming Subsection \ref{sub:review}. In striking contrast, a complete understanding in the case of an arbitrary $d \ge 3$ components system
\begin{equation}\label{system}
\begin{cases}
-\Delta u_i +\lambda_i u_i = \sum_{j=1}^d \beta_{ij} u_j^2 u_i & \text{in $\Omega$}\\
u_i \not \equiv 0 \\
u_i \in H_0^1(\Omega),
\end{cases} \qquad i=1,\dots,d, \quad \beta_{ij}= \beta_{ji}
\end{equation}
is not available, mainly due to the possible coexistence of \emph{cooperation} and \emph{competition}, that is, the existence of two pairs $(i_1,j_1)$ and $(i_2,j_2)$ such that $\beta_{i_1 j_1}>0$ and $\beta_{i_2 j_2} < 0$. We recall that the sign of the coupling parameter $\beta_{ij}$ determines the nature of the interaction between the components $u_i$ and $u_j$: if $\beta_{ij}>0$, then they cooperate, while if $\beta_{ij}<0$, then they compete. Very recently, the systematic study of existence of least energy solutions in problems with simultaneous cooperation and competition has been started by the first author in \cite{Soave} and by Y. Sato and Z.-Q. Wang in \cite{SatoWang}. Nevertheless, there are still some gaps to fill towards a complete understatement of the problem. In the present paper we give a contribution to fill some of these gaps, and, in the mean time, we analyse the symmetry properties of least energy solutions to \eqref{system}, proving results which are new also in a purely cooperative context ($\beta_{ij} > 0$ for every $i \neq j$), and recover what is known in the purely competitive one ($d=2$ and $\beta_{12} =\beta< 0$). 

In order to motivate our research, in the following we review the results already available in the literature, but before it is worth to observe that thanks to the assumption $\beta_{ij}=\beta_{ji}$, system \eqref{system} has variational structure, as its solutions are critical points of the functional $J:H_0^1(\Omega,\R^d) \to \R$ defined by 
\begin{equation}\label{def: functional}
J(\mf{u}):= \int_{\Omega} \frac{1}{2} \sum_{i=1}^d \left( |\nabla u_i|^2 + \lambda_i u_i^2 \right) - \int_\Omega\frac{1}{4} \sum_{i,j=1}^d \beta_{ij} u_i^2 u_j^2,
\end{equation}
where we used the vector notation $\mf{u}=(u_1,\dots,u_d)$. Observe that \eqref{system} admits \emph{semi-trivial solutions}, i.e., solutions $\mf{u}\not\equiv 0$ with some zero components. However, we will be only interested in the existence of	\emph{positive solutions}: $\mf{u}$ solving \eqref{system} such that $u_i>0$ for every $i$. In particular, we will be interested in the existence of \emph{least energy positive solutions}, that is solutions achieving the \emph{least energy positive level};
\[
\inf \left\{J(\mf{u}):\  \mf{u} \text{ is a solution of \eqref{system} such that } u_i>0 \text{ for all }i \right\}.
\]
Observe that, due to the shape of the functional and to the strong maximum principle, in the definition of the previous energy level, one can replace $u_i>0$ by $u_i\not \equiv 0$. Observe moreover that, depending on the ranges of $\beta_{ij}$, this might not coincide with the least energy level (ground state):
\[
\inf\{J(\mf{u}):\ \mf{u} \neq 0 \text{ is a solution of }  \eqref{system}  \},
\]
(see for instance \cite{AmbrosettiColorado} or \cite{Sirakov}), which is an additional difficulty when one looks for least energy positive solutions.

\subsection{Known results}\label{sub:review}
Let us first describe the existing results in either the \emph{purely cooperative case} $\beta_{ij}>0$ for every $i\neq j$, or in the \emph{purely competitive case} $\beta_{ij}<0$ for every $i\neq j$. Some results deal with $\Omega$ bounded, while others with the case $\Omega=\R^N$. An important observation is that, in all the cited contributions dealing with the case $\R^N$, one is naturally led to work in $H^1_\textrm{rad}(\R^N)$, and with least energy positive \emph{radial} solutions, that is positive radial solutions having minimal energy among all the positive radial solutions. In fact, in the purely cooperative case, each positive solution of \eqref{system} is radially decreasing, as comes out from \cite{BuscaSirakov}; hence a least energy positive level coincides with the radial one. On the other hand, in the purely competitive case, it is proved in \cite[Theorem 1]{LinWei} that the least energy positive level is not achieved, so one is naturally led to deal with the radial one. Since working in a radial setting makes possible to take advantage of the compactness of the Sobolev embedding $H^1_{\rad}(\R^N) \hookrightarrow L^4(\R^N)$, the research of least energy positive radial solutions in $\R^N$ is substantially equivalent to the research of least energy solutions in bounded domains. For the sake of clarity, in what follows we always refers to a result as it was stated in its original contribution, but the reader has always to keep in mind that, whenever we cite a result for ``least energy radial positive solution in $\R^N$", this also yields an existence of ``least energy positive solution in a bounded domain", and vice-versa.

\medbreak

Having this in mind, we focus at first on the $2$ components system \eqref{system 2 comp} in $\R^N$ ($N=2,3$), with $\lambda_{1},\lambda_2,\mu_1,\mu_2 >0$. By collecting the main theorems in \cite{AmbrosettiColorado, LinWeiErratum,LinWei, MaiaMontefuscoPellacci, Sirakov}, one deduces that there exist $0 < \underline{\beta} \le \overline{\beta}$ (depending on $\lambda_i$ and $\mu_i$) such that if either $-\infty<\beta < \underline{\beta}$, or $\beta > \overline{\beta}$, then \eqref{system 2 comp} has a least energy radial positive solution (whose level coincides with the least energy positive level for $\beta>0$). For the expression of the optimal values for $\underline{\beta}$ and $\overline{\beta}$, we refer to \cite{ChenZou, Mandel, Sirakov}.

The results for the $2$ components system have been partially extended for systems with an arbitrary number of components. Sufficient conditions for the existence of a least energy radial positive solutions of \eqref{system} in $\R^N$ are the following.
\begin{itemize}
\item \emph{Strong cooperation}: $\lambda_1=\dots=\lambda_N=\lambda>0$, $\beta_{ii}>0$, and $\beta_{ij}=\beta$ for every $i \neq j$ is larger than a positive constant depending on $\beta_{ii}$ and $\lambda$ (see Corollary 2.3 and Theorem 2.1 in \cite{LiuWang}; see also Theorem 1.6 and Remark 3 in \cite{Soave}). 
\item \emph{Weak cooperation}: $\lambda_i >0$, $\beta_{ii}>0$, $0<\beta_{ij}$ smaller than a positive constant depending on $\lambda_i$ and $\beta_{ii}$, and the matrix $(\beta_{ij})$ is positive definite (see Theorem 2 in \cite{LinWeiErratum});
\item \emph{Competition}: if $\lambda_i >0$, $\beta_{ii}>0$, and $\beta_{ij} \le 0$ for every $i \neq j$, then there exists a least energy radial positive solution (see Theorem 1.1. plus Remark 1.5 in \cite{LinWei2} for the case $\Omega$ bounded; we refer also to Theorem 3.1 in \cite{LiuWang}, and to Corollary 1.4 plus Proposition 1.5 in \cite{Soave}). 
\end{itemize}
Other sufficient conditions in a purely cooperative setting have been given in  \cite[Section 4]{Sirakov}, \cite[Theorem 2.1]{LiuWang} and \cite{Chang}. It is natural to assume that $\beta_{ij}$ is either large, or small, with respect to $\beta_{ii}$ and $\beta_{jj}$. Indeed if for instance $\beta_{ii} \le \beta_{ij} \le \beta_{jj}$ and $\lambda_i>\lambda_j$, then a positive solution of \eqref{system} does not exists, see Theorem 1-($ii$) in \cite{Sirakov} or Theorem 0.2 in \cite{BartschWang}. 

\medbreak

As far as the possible occurrence of simultaneous cooperation and competition is concerned, in \cite[Theorem 0.1]{SatoWang} Y. Sato and Z.-Q. Wang considered a $3$ components system in a bounded domain, showing that a least energy positive solution of \eqref{system} does exist if $\beta_{13},\beta_{23} \le 0$ are fixed, and $\beta_{12} \gg 1$ is very large (depending on $\beta_{13}$ and $\beta_{23}$); we refer to this cases as \emph{competition vs. arbitrarily large cooperation}. In \cite[Theorems 1.6, 1.7, 1.9]{Soave} the author considered an arbitrary $d$ components system, proving the existence of least energy positive solutions whenever the $d$ components are divided into $m$ groups, with $m \le d$, and 
\begin{itemize}
\item the relation between components of the same group is purely cooperative, with coupling parameters greater than an explicit positive constant,
\item the relation between components of different groups is competitive, and the competition is very strong.
\end{itemize}
When restricted to a $3$ components system, this leads for instance to existence of a least energy solution if $\beta_{12}> \overline{\beta}>0$, and $\beta_{13},\beta_{23} \ll -1$ (depending on $\beta_{12})$. We refer to this cases as to \emph{strong cooperation vs. arbitrarily large competition}. In the previous two results we would like to stress that the ``large'' parameters depend on \emph{all} the other interaction terms. Ahead we will give a result which allows to fix \emph{a priori} all the ranges for the parameters, which consists of a novelty when dealing with mixed cooperative and competitive interaction.

\begin{remark}
It is worth to point out the difference between \emph{strong cooperation} and \emph{arbitrarily large cooperation}: in the former case, we mean that some $\beta_{ij}$'s are greater than a positive constant which can be large but is fixed and determined as function of $\beta_{ii}$ and $\lambda_i$, more or less explicitly; in the latter one we mean that some $\beta_{ij}$ have to be thought as very large parameters which are tending to $+\infty$, depending in a non explicit way on the other parameters. The same discussion holds for the distinction between \emph{competition} and \emph{arbitrarily large competition}.
\end{remark}

For further existence results for system \eqref{system} with mixed cooperative and competitive couplings, which regard solutions not necessarily of least energy, we refer the reader to \cite[Theorem 4]{LinWei}, \cite[Theorem 2.1]{LiuWang2008}, \cite[Corollary 1.4]{Soave}, \cite{SatoWang2} and \cite{Colorado}.

Concerning the symmetry properties of least energy positive solutions in bounded domains, the main results are contained in \cite{TavaresWeth,WangWillem}. We postpone a precise description of them after having introduced some notation. In the next subsections, we describe the main results of this paper.

\subsection{Main results: existence}\label{sub: existence}
We are concerned with the existence of least energy solutions of system \eqref{system}:
\[
\begin{cases}
-\Delta u_i +\lambda_i u_i= \sum_{i=1}^d \beta_{ij} u_i u_j^2 & \text{in $\Omega$} \\
u_i=0 & \text{on $\partial \Omega$},
\end{cases} \qquad i=1,\dots,d
\]
where either
\begin{equation}\label{basic assumption}
\begin{split}
\text{$\Omega$ is a bounded domain of $\R^N$ with $N=2,3$}, \\ \text{$\lambda_i>-\mu_1(\Omega)$ and $\beta_{ii}>0$ for every $i=1,\dots,d$}, \\
\text{$\beta_{ij}=\beta_{ji} \in \R$ for every $i \neq j$},
\end{split}
\end{equation} 
and $\mu_1(\Omega)$ is the first eigenvalue of the Laplace operator with homogeneous Dirichlet boundary conditions on $\Omega$, or
\begin{equation}\label{basic assumption in space}
\begin{split}
\text{$\Omega=\R^N$, with $N=2,3$}, \\ \text{$\lambda_i>0$ and $\beta_{ii}>0$ for every $i=1,\dots,d$}, \\
\text{$\beta_{ij}=\beta_{ji} \in \R$ for every $i \neq j$}.
\end{split}
\end{equation}
In this last case, the boundary condition $u_i=0$ on $\pa \Omega$ has to be replaced by $u_i\to 0$ as $|x|\to \infty$, and in the following instead of $H_0^1(\Omega)$ we have to write $H^1_{\rad}(\R^N)$, the space of $H^1$ radially symmetric functions in $\R^N$. 

In the following we recall some notations already introduced in 
\cite{Soave}.

\begin{itemize}
\item $B:= (\beta_{ij})_{i,j=1,\dots,d}$, and we refer to it as to the \emph{coupling matrix} of system \eqref{system}. 
\item We endow the Sobolev space $H_0^1(\Omega)$ - or $H^1_{\rad}(\R^N)$ - with scalar products and norms
\[
\langle u, v \rangle_i := \int_{\Omega} \left( \nabla u \cdot \nabla v + \lambda_i uv\right)  \quad \text{and} \quad \|u\|_{\lambda_i}^2= \|u\|_i^2:=\langle u, u \rangle_i,
\]
for every $i=1,\dots,d$; in light of the assumptions on $\lambda_i$, these norms are equivalent to the standard one. 
\item For an arbitrary $1 \le m \le d$, we say that a vector $\mf{a}=(a_0,\dots,a_m) \in \N^{m+1}$ is \emph{a $m$-decomposition of $d$} if
\[
0=a_0<a_1<\dots<a_{m-1}<a_m=d;
\]
given a $m$-decomposition $\mf{a}$ of $d$, we set, for $h=1,\dots,m$,
\begin{equation}\label{def Ih}
\begin{split}
& I_h:= \{i \in  \{1,\dots,d\}:  a_{h-1} < i \le a_h \}, \\
&\mathcal{K}_1:= \left\{(i,j) \in I_h^2 \text{ for some $h=1,\dots,m$, with $i \neq j$}\right\}, \\ &\mathcal{K}_2 := \left\{(i,j) \in I_h \times I_k \text{ with $h \neq k$} \right\}. 
\end{split}
\end{equation}
\end{itemize}

This way, we have partitioned the set $\{1,\ldots, d\}$ into $m$ groups $I_1,\ldots, I_m$, and have consequently splitted the components into $m$ groups: $\{u_i:\ i\in I_h\}$. 

We point out that the limit cases $m=1$ or $m=d$ are also included in our terminology. This means that we will be able to recover (and sometimes improve) the known results for the purely cooperative case (taking $m=1$) and for the purely competitive or weakly cooperative one (taking $m=d$).

Let us continue to introduce more notations.

Given a $m$-decomposition $\mf{a}$ of $d$, we introduce the Nehari-type set induced by $\mf{a}$ as
\begin{equation}\label{def N}
\Ncal := \left\{ \mf{u} \in H^1_0(\Omega;\R^d) \left|\  
\begin{array}{l}
\sum_{i \in I_h} \|u_i\|_i \neq 0 \text{ and } \sum_{i \in I_h} \partial_i J (\mf{u}) u_i=0 \\
\text{for every } h=1,\dots,m
\end{array}\right. \right\}.
\end{equation}

\medbreak

 Only to fix our minds, we assume from now on that $\Omega$ is a bounded domain, and then we suppose that \eqref{basic assumption} is in force. Unless otherwise specified, the results can be extended for systems in $\R^N$ replacing ``least energy positive solution" with ``least energy radial positive solution".

For a given $d \ge 2$, let $\mf{a}$ be a $m$ decomposition of $d$. We set
\[
c:= \inf_{\mf{u} \in \mathcal{N}} J(\mf{u}),
\]
the infimum of $J$ on the Nehari-type set $\mathcal{N}$. 

\begin{theorem}\label{thm:c_B_is_achieved}
There exists $K>0$, depending only on $\beta_{ii}, \lambda_i$ ($i=1,\ldots, d$), such that, whenever $B$ satisfies
\begin{align*}
&\beta_{ij} \geq 0 \quad \forall (i,j)\in \Kcal_1, \qquad -\infty <\beta_{ij}< K \quad \forall (i,j)\in \Kcal_2,
\end{align*}
then $c$ is achieved by a nonnegative $\mf{u}_{\min} \in \Ncal$. Furthermore, any minimizer is a nonnegative solution of \eqref{system}.
\end{theorem}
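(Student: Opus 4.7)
My plan is to apply the direct method on $\Ncal$, exploiting the block structure induced by the decomposition $\mf{a}$. For any $\mf{u}\in H_0^1(\O,\R^d)$ with $b_h(\mf{u}):=\sum_{i\in I_h}\|u_i\|_i^2>0$ for each $h=1,\dots,m$, consider rescalings of the form $(t\cdot\mf{u})_i:=t_h u_i$ for $i\in I_h$, with $t\in(0,\infty)^m$. Setting $\tau_h:=t_h^2$ and $A_{hk}(\mf{u}):=\sum_{(i,j)\in I_h\times I_k}\b_{ij}\int_\O u_i^2u_j^2$, the constraint $t\cdot\mf{u}\in\Ncal$ becomes the symmetric linear system $A(\mf{u})\tau=b(\mf{u})$; the hypothesis $\b_{ii}>0$ and $\b_{ij}\geq 0$ on $\Kcal_1$ makes $A_{hh}(\mf{u})$ strictly positive, while $\b_{ij}<K$ on $\Kcal_2$ together with Sobolev controls each off-diagonal block by a constant multiple of $K$. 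If $K$ is chosen small enough (in terms of $\b_{ii}$ and $\l_i$), $A(\mf{u})$ is strictly diagonally dominant, hence invertible with positive inverse on positive vectors, so a unique $t(\mf{u})\in(0,\infty)^m$ exists. Moreover $J(t\cdot\mf{u})=\tfrac12 b\cdot\tau-\tfrac14\tau^{\top}A\tau$ is then strictly concave in $\tau$, so $t(\mf{u})$ is the unique global maximum of $s\mapsto J(s\cdot\mf{u})$; from this $c>0$ and the boundedness in $H_0^1$ of any minimizing sequence follow.

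\emph{Compactness and non-triviality.} Let $(\mf{u}_n)\subset\Ncal$ with $J(\mf{u}_n)\to c$. Since both $J$ and the defining equations of $\Ncal$ depend only on the $(u_i^n)^2$, we may replace $\mf{u}_n$ by $|\mf{u}_n|$ and assume $u_i^n\geq 0$. Up to subsequence, $\mf{u}_n\rightharpoonup\mf{u}$ weakly in $H_0^1$ and strongly in $L^4$ by Rellich--Kondrachov, with $\mf{u}\geq 0$. The Nehari identity $b_h(\mf{u}_n)=\sum_k A_{hk}(\mf{u}_n)$, combined with Sobolev bounds and the cooperative/competitive splitting described above, forces a uniform lower bound $b_h(\mf{u}_n)\geq\d>0$ for each $h$; by strong $L^4$-convergence and the same identity passed to the limit, this gives $b_h(\mf{u})\geq\d'>0$, so $\mf{u}$ is non-trivial on every block.

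\emph{Conclusion.} Applied to $\mf{u}$, the first step produces a unique $\bar t:=t(\mf{u})\in(0,\infty)^m$ with $\bar t\cdot\mf{u}\in\Ncal$. Since $\mf{u}_n\in\Ncal$ means that $(1,\dots,1)$ is the global maximum of $s\mapsto J(s\cdot\mf{u}_n)$, weak lower semicontinuity of $\|\cdot\|_i^2$ together with strong $L^4$-convergence of the quartic terms yields
\[
J(\bar t\cdot\mf{u})\leq\liminf_n J(\bar t\cdot\mf{u}_n)\leq\liminf_n J(\mf{u}_n)=c.
\]
As $\bar t\cdot\mf{u}\in\Ncal$, equality must hold throughout; inspecting the equality cases for the Hilbert norms forces $\|u_i^n\|_i\to\|u_i\|_i$ for all $i$, hence $\mf{u}_n\to\mf{u}$ strongly in $H_0^1$, $\bar t=(1,\dots,1)$, and $\mf{u}\in\Ncal$ achieves $c$. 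Finally, Lagrange multipliers provide $J'(\mf{u})=\sum_{h=1}^m\mu_h G_h'(\mf{u})$ with $G_h(\mf{v}):=\sum_{i\in I_h}\pa_i J(\mf{v})v_i$; pairing with $u_i$ and summing on $i\in I_h$ leads to a linear system $M\mu=0$ in which $M$ is a perturbation of $-2A(\mf{u})$ by lower-order terms, hence invertible under the same smallness of $K$, so $\mu=0$ and $\mf{u}$ is a nonnegative solution of \eqref{system}.

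\textbf{Main obstacle.} The heart of the argument is the quantitative choice of $K$: it must be small enough, uniformly in $\mf{u}\in\Ncal$, to simultaneously ensure (i) strict diagonal dominance of the Nehari matrix $A(\mf{u})$, (ii) global strict concavity of the scaling map, (iii) the blockwise lower bound $b_h\geq\d$ for minimizing sequences, and (iv) invertibility of the final Lagrange system. All four requirements reduce to the same quantitative domination of the intra-group interaction (encoded by $\b_{ii}$ and the $\Kcal_1$-coefficients) over the inter-group one (bounded above by $K$), and the announced threshold is the minimum of the thresholds dictated by (i)--(iv).
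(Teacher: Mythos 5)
Your proposal follows the direct method: take a minimizing sequence, extract a weak limit $\mf{u}$, and project $\mf{u}$ back onto $\Ncal$ via a block rescaling $\bar t \cdot \mf{u}$. This is precisely the route the paper deliberately abandons (it points to Lemmas 2.6--2.8 of the earlier reference for how delicate this projection is), replacing it with Ekeland's variational principle applied to $J|_{\Ncal}$. With Ekeland, the minimizing sequence can be taken to be a constrained Palais--Smale sequence; after showing the Lagrange multipliers vanish (using positive definiteness of $M_B$ of the weak limit, via the strict-diagonal-dominance lemma), one gets a free PS sequence and hence strong $H^1_0$ convergence, so the projection step never appears. Your route is not merely a variant of the same proof but a genuinely different strategy.

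Unfortunately, the projection step in your argument contains a real gap. You write that strict diagonal dominance of $A(\mf{u})$ (positive diagonal, small off-diagonal) makes $A(\mf{u})$ ``invertible with positive inverse on positive vectors, so a unique $t(\mf{u})\in(0,\infty)^m$ exists.'' This is false. Strict diagonal dominance with positive diagonal guarantees positive definiteness and invertibility, but the inverse need not map the positive cone into itself. A $2\times 2$ counterexample: $A=\begin{pmatrix}3&2\\2&3\end{pmatrix}$ is strictly diagonally dominant with positive diagonal, yet $A^{-1}(1,10)^{\top}=\tfrac15(-17,28)^{\top}$ has a negative entry. The property you need (nonnegative inverse) is essentially that of an $M$-matrix, which requires nonpositive off-diagonals --- but here the off-diagonal blocks are only bounded above by $K$ and may well be positive. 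Even restricting to $\mf{u}\in\widetilde{\Ncal}$ (so that $b_h\le\sum_k A_{hk}$) does not repair this, as the same example with $b=(1,5)$ shows. Thus the existence of a positive $\bar t$ is not justified, and without it the rest of the compactness argument does not get off the ground.

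Once the projection is granted, your equality-case argument and the strong-convergence conclusion can be made rigorous (the liminf of a nonnegative sum splits by Fatou and weak lower semicontinuity, so each $\|u_i^n\|_i\to\|u_i\|_i$ along a subsequence, giving strong $H^1_0$ convergence and $\bar t=\mf{1}$ by uniqueness of the concave maximizer). And the final Lagrange-multiplier argument is in the same spirit as the paper's PS verification. But the positive projection is the missing piece, and it is not a small technicality: it is exactly the difficulty the Ekeland reformulation was designed to avoid.
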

This is an improvement of Theorem 1.3 in \cite{Soave}, where $K$ depends also on $\beta_{ij}$ with $(i,j) \in \mathcal{K}_1$, and the minimization is considered in an \emph{open subset of $\mathcal{N}$} (notice that in such case a minimizer with all positive components needs not to be a least energy solution). 

Theorem \ref{thm:c_B_is_achieved} gives existence of nonnegative solutions for systems of $d$ equations where the populations are associated in groups, in such a way that inside each group there is cooperation, while between different groups we have either competition or weak cooperation.

Observe that if we can show that $\mf{u}_{\min}$ has all positive components, we can immediately conclude that it is a least energy positive solution of the system. We are able to obtain this conclusion in several situations. First, applying Theorem \ref{thm:c_B_is_achieved} in the particular case $m=d$, which leads to the decomposition $\mf{a}=(0,1,\ldots, d)$, we have existence of least energy positive solutions of \eqref{system} also in regimes of \emph{competition and/or weak cooperation}.

\begin{corollary}\label{corol: comp e weak coop}
There exists $K>0$, depending only on $\beta_{ii}, \lambda_i$ ($i=1,\ldots, d$), such that if 
\[
-\infty< \beta_{ij} < K \qquad \text{for every $i \neq j$},
\]
then $c$ is achieved by a least energy positive solution of \eqref{system}.
\end{corollary}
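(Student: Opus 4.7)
The plan is to apply Theorem \ref{thm:c_B_is_achieved} directly with the extreme decomposition $m=d$, and then upgrade ``nonnegative solution'' to ``positive solution'' via the strong maximum principle. With $\mathbf{a}=(0,1,\ldots,d)$ each group $I_h=\{h\}$ is a singleton, so $\mathcal{K}_1=\emptyset$ and $\mathcal{K}_2=\{(i,j):i\neq j\}$. Consequently, the hypothesis $\beta_{ij}\geq 0$ on $\mathcal{K}_1$ becomes vacuous, while $\beta_{ij}<K$ on $\mathcal{K}_2$ is exactly the assumption of the corollary. Theorem \ref{thm:c_B_is_achieved} then delivers a nonnegative $\mathbf{u}_{\min}\in\mathcal{N}$ with $J(\mathbf{u}_{\min})=c$, which is moreover a nonnegative solution of \eqref{system}.

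Next I would establish strict positivity componentwise. Since $I_h=\{h\}$, the defining condition of $\mathcal{N}$ in \eqref{def N} degenerates into $\|(u_{\min})_i\|_i\neq 0$ and $\partial_i J(\mathbf{u}_{\min})(u_{\min})_i=0$ for every individual $i=1,\dots,d$ (i.e., the ordinary Nehari identity for each component). In particular $(u_{\min})_i\not\equiv 0$ for every $i$. By standard elliptic regularity applied to \eqref{system}, each $(u_{\min})_i$ is smooth and solves the linear equation
\[
-\Delta (u_{\min})_i+\Bigl(\lambda_i-\sum_{j=1}^d \beta_{ij}(u_{\min})_j^2\Bigr)(u_{\min})_i=0,
\]
with nonnegative $(u_{\min})_i\not\equiv 0$. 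The strong maximum principle (and Hopf's lemma at $\partial\Omega$) then forces $(u_{\min})_i>0$ throughout $\Omega$, so $\mathbf{u}_{\min}$ is a positive solution.

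Finally, I would identify $c$ with the least energy positive level. Any positive solution $\mathbf{v}$ of \eqref{system} satisfies $\partial_i J(\mathbf{v})v_i=0$ and $v_i\not\equiv 0$ for every $i$, hence $\mathbf{v}\in\mathcal{N}$, giving $c\leq J(\mathbf{v})$. Combined with the fact that $\mathbf{u}_{\min}$ is itself a positive solution attaining $c$, this shows that $c$ coincides with the least energy positive level and is realized by $\mathbf{u}_{\min}$.

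The main (and only) conceptual obstacle is the positivity step: one has to verify that the degenerate case $m=d$ really does force every component of a minimizer to be nontrivial (which is encoded in the definition of $\mathcal{N}$), since otherwise the strong maximum principle cannot be invoked. The rest of the argument is a bookkeeping reduction to Theorem \ref{thm:c_B_is_achieved}.
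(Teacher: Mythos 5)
Your proposal is correct and follows exactly the route the paper takes: it applies Theorem \ref{thm:c_B_is_achieved} with the decomposition $m=d$, so that $\mathcal{K}_1 = \emptyset$, each $I_h$ is a singleton, the Nehari constraint forces every component to be nontrivial, and the strong maximum principle upgrades nonnegativity to strict positivity. The paper does not spell out a separate proof of this corollary --- it is stated as an immediate consequence of the theorem with the $d$-decomposition --- and your writeup simply fills in the same bookkeeping, including the final observation that any positive solution lies in $\mathcal{N}$, so $c$ is indeed the least energy positive level.
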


Recall from the previous subsection that, up to now, this result was only know in the pure competitive or in the pure cooperative cases. Focusing on this last situation, note in particular that, unlike in \cite[Theorem 2]{LinWeiErratum} (where the case $\beta_{ij}>0$ for every $i \neq j$ is considered), Corollary \ref{corol: comp e weak coop} does not require the positive definiteness of the matrix $B$. Moreover, in the particular case of \emph{pure cooperation}, we will show that
\begin{equation}\label{eq:special_shape_of_K}
K:= \frac{\min_{i=1,\ldots, d} \{S_i^2\}}{2\sum_{j=1}^m \frac{S_j^2}{\beta_{jj}}},\quad  \text{ where } S_i:=\inf_{\int_\Omega u^4=1} \|u\|_i^2,
\end{equation}
which generalizes \cite{WangWillem}, where only the case $d=2$ is considered.

If we consider a general $m$ decomposition with $m<d$, to find new least energy positive solutions we have to find conditions on the coupling parameters ensuring that the minimizer $\mf{u}_{\min}$ in Theorem \ref{thm:c_B_is_achieved} has all non-trivial components. In what follows we shall use this argument to prove new existence results with respect to those in \cite{SatoWang, Soave}. As in the quoted papers, the idea is to find conditions on the coupling parameters $\beta_{ij}$ which ensure that 
\[
\inf_{\mathcal{N}} J < \inf\left\{J(\mf{u}): \text{$\mf{u} \in \mathcal{N}$ and $u_i =0$ for some $i$} \right\}.
\]  

\begin{theorem}\label{thm: weak coop and strong coop 1}
Let $d \ge 2$, let $\mf{a}$ be a $m$-decomposition of $d$ for some $1 \le m \le d$. Let $K$ be the constant defined in Theorem \ref{thm:c_B_is_achieved}. If 
\begin{itemize}
\item[($i$)] $\beta_{ij}=\beta_h > \max\{\beta_{ii}: i \in I_h\}$ for every $(i,j) \in I_h^2$ with $i \neq j$, $h=1,\dots,m$;
\item[($ii$)] $\beta_{ij}=b < K$ for every $(i,j) \in \mathcal{K}_2$;
\item[($iii$)] $\lambda_i=\lambda_h>-\mu_1(\Omega)$ for every $i \in I_h$, $h=1,\dots,m$;
\end{itemize} 
then any minimizer of $J$ in $\mathcal{N}$ is positive, and hence system \eqref{system} has a least energy positive solution.
\end{theorem}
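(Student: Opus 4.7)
By Theorem~\ref{thm:c_B_is_achieved} (which applies under~(i)--(iii): the bound in (i) forces $\beta_{ij}\ge 0$ on $\Kcal_1$, and (ii) is precisely the $\Kcal_2$ bound), there exists a nonnegative minimizer $\mf{u}\in\Ncal$ that also solves \eqref{system}. It remains to show that each $u_i\not\equiv 0$; strict positivity will then follow from the strong maximum principle. I would argue by contradiction: suppose $u_{i_0}\equiv 0$ for some $i_0\in I_{h_0}$. The Nehari condition $\sum_{i\in I_{h_0}}\|u_i\|_i\neq 0$ excludes $|I_{h_0}|=1$, so $|I_{h_0}|\ge 2$ and one may fix $j_0\in I_{h_0}\setminus\{i_0\}$ with $u_{j_0}\not\equiv 0$.

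The key idea exploits hypothesis~(iii): because $\lambda_i=\lambda_{h_0}$ throughout $I_{h_0}$, the norm $\|\cdot\|_{\lambda_{h_0}}$ is common to every component of the group, and one can perform a \emph{norm-preserving swap} of mass between $u_{i_0}$ and $u_{j_0}$. For $\eps\in(0,1)$ set
\[
u_{i_0}^\eps := \eps\, u_{j_0},\qquad u_{j_0}^\eps := \sqrt{1-\eps^2}\, u_{j_0},\qquad u_i^\eps := u_i \ \text{ for } i\neq i_0,j_0.
\]
Denote by $N_h(\mf{w}):=\sum_{i\in I_h}\|w_i\|_{\lambda_h}^2$ the group norm, by $M_h(\mf{w}):=\sum_{(i,j)\in I_h^2}\beta_{ij}\int w_i^2 w_j^2$ the intra-group quartic, and by $C_{hk}(\mf{w}):=b\int\bigl(\sum_{i\in I_h}w_i^2\bigr)\bigl(\sum_{j\in I_k}w_j^2\bigr)$ the cross-group term. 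The pointwise identity $(u_{i_0}^\eps)^2+(u_{j_0}^\eps)^2=u_{j_0}^2$, together with $u_{i_0}\equiv 0$, makes $N_h(\mf{u}^\eps)$, $C_{hk}(\mf{u}^\eps)$, and $M_h(\mf{u}^\eps)$ for $h\neq h_0$ independent of $\eps$. A direct expansion in the two modified components then yields
\[
M_{h_0}(\mf{u}^\eps) = M_{h_0}(\mf{u}) + 2\eps^2\bigl(\beta_{h_0}-\beta_{j_0 j_0}\bigr)\int_\Omega u_{j_0}^4 + O(\eps^4),
\]
whose leading coefficient is strictly positive by hypothesis~(i).

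To push $\mf{u}^\eps$ back onto $\Ncal$, I would rescale each group: $v_i^\eps := t_{h(i)}(\eps)\, u_i^\eps$. Writing $x_h=t_h^2$, the $m$ Nehari relations for $\mf{v}^\eps$ amount to the linear system $A(\eps)\mf{x}(\eps)=\mf{N}$, where $A_{hh}(\eps)=M_h(\mf{u}^\eps)$ and $A_{hk}(\eps)=C_{hk}$ for $h\neq k$; the matrix $A(\eps)$ is symmetric because $C_{hk}=C_{kh}$. The original Nehari identities at $\mf{u}$ read $A(0)\mf{1}=\mf{N}$, so $\mf{x}(0)=\mf{1}$, and the implicit function theorem produces a smooth curve $\mf{x}(\eps)$ near $\eps=0$ under invertibility of $A(0)$. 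Combining the on-$\Ncal$ identity $J(\mf{v}^\eps)=\tfrac14\mf{N}\cdot\mf{x}(\eps)$ with the symmetry of $A$ and the consequent relation $A(0)^{-1}\mf{N}=\mf{1}$, implicit differentiation of $A(\eps)\mf{x}(\eps)=\mf{N}$ gives
\[
J(\mf{v}^\eps) = J(\mf{u}) - \tfrac12\bigl(\beta_{h_0}-\beta_{j_0 j_0}\bigr)\Bigl(\int_\Omega u_{j_0}^4\Bigr)\eps^2 + O(\eps^4),
\]
contradicting the minimality of $\mf{u}$. Hence every $u_i\not\equiv 0$, and strict positivity follows.

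The main technical obstacle I anticipate is the invertibility of $A(0)$, needed both for the implicit function theorem and for the group-wise Nehari projection to be single-valued near $\mf{u}$. This is a manifold regularity statement for $\Ncal$ at the critical point $\mf{u}$ of $J|_\Ncal$ and should fit inside the variational framework of Theorem~\ref{thm:c_B_is_achieved}; alternatively, the implicit function theorem can be bypassed by a direct perturbative analysis of $A(\eps)\mf{x}(\eps)=\mf{N}$, since any continuous branch with $\mf{x}(\eps)\to\mf{1}$ produces the same leading-order energy decrease.
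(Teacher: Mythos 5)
Your proposal is correct. It uses the same underlying variational idea as the paper — perturb in the direction that pours mass from a nonzero component $u_{j_0}$ into the vanishing component $u_{i_0}$ inside the same group, so that assumption (i) makes the intra-group quartic strictly increase at order $\eps^2$ — but the execution differs in a meaningful way. The paper invokes the abstract second-order necessary condition at a constrained minimizer: since $\mathcal{N}$ is a $\mathcal{C}^2$ manifold near $\mf{u}$ (by Lemma \ref{lemma:positive_definite}, $\mf{u}\in\mathcal{E}$) and since the minimizer is a \emph{free critical point} of $J$ (natural constraint property, $dJ(\mf{u})=0$), one has $d^2J(\mf{u})[\mf{v},\mf{v}]\ge 0$ for every $\mf{v}\in T_\mf{u}\mathcal{N}$; it then plugs in exactly the tangent vector $\mf{v}$ you call the derivative of the swap at $\eps=0$, and uses the equation for $u_p$ to produce the contradiction $0\le (\beta_{pp}-\beta_{\bar h})\int u_p^4<0$. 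Your version instead builds an explicit Nehari-fibered curve $\mf{v}^\eps\in\mathcal{N}$ via the norm-preserving swap and the group-wise scaling $\mf{x}(\eps)=A(\eps)^{-1}\mf{N}$, reads off $J(\mf{v}^\eps)=\tfrac14\mf{N}\cdot\mf{x}(\eps)$, and extracts the $-\tfrac12(\beta_{h_0}-\beta_{j_0j_0})(\int u_{j_0}^4)\eps^2$ term from the expansion of $A(\eps)^{-1}$. What this buys you: you never need to know that $dJ(\mf{u})=0$ — only constrained minimality of $J|_{\mathcal{N}}$ and the invertibility of $A(0)=M_B(\mf{u})$, which is exactly the strict diagonal dominance from Lemma \ref{lemma:positive_definite} that both proofs rely on (you correctly flagged this as the one point to be settled). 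The roles of hypotheses (i), (ii), (iii) are identical in both arguments, with (iii) making the swap norm-preserving, (i) giving the strict sign, and (ii) guaranteeing diagonal dominance of the Nehari matrix.
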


 This statement together with Corollary 2.3 in \cite{LiuWang} and our Corollary \ref{corol: comp e weak coop} provides the natural extension of what is known for the $2$ components system for systems with an arbitrary number of equations. To be more precise, let us give an example in the case of $3$ components system with the additional assumptions $\lambda_1=\lambda_2=\lambda$ and $\beta_{13}=\beta_{23}$. In this case, from our result and the quoted ones we deduce the existence of $0<\underline{\beta}<\overline{\beta}$ such that \eqref{system} admits a least energy positive solution when one of the following conditions is verified:
\begin{equation}\label{bar b_nderbar_b}
\begin{split}
& \beta_{12}=\beta_{13}=\beta_{23}>\overline{\beta};\quad \lambda_3=\lambda\\
& \beta_{12}>\overline{\beta}\text{ and }-\infty<\beta_{13}=\beta_{23}<\underline{\beta};\\
& -\infty<\beta_{12},\beta_{13}=\beta_{23}<\underline{\beta}.
\end{split}
\end{equation}
The downsize of the previous theorem is that the restriction $\beta_{ij}=b$ for every $(i,j) \in \mathcal{K}_2$ is quite strong. For this reason we present also an alternative result, which permits to avoid this assumption but requires that $|\beta_{ij}|$ is not too large for $(i,j) \in \mathcal{K}_2$.

\begin{theorem}\label{thm: weak coop and strong coop 2}
Let $d \ge 2$, let $\mf{a}$ be a $m$-decomposition of $d$ for some $0 \le m \le d$. Let $K$ be the constant defined in Theorem \ref{thm:c_B_is_achieved}, and fix $\alpha>1$. If
\begin{itemize}
\item[($i$)] $\beta_{ij}=\beta_h$ for every $(i,j) \in I_h^2$ with $i \neq j$, $h=1,\dots,m$, and
\[
\beta_h > \frac{\alpha}{\alpha-1}\max\{\beta_{ii}: i \in I_h\};
\]
\item[($ii$)] for every $(i,j) \in \mathcal{K}_2$ there holds 
\[
|\beta_{ij}| \le \frac{K}{\alpha d^2};
\]
\item[($iii$)] $\lambda_i=\lambda_h>-\mu_1(\Omega)$ for every $i \in I_h$, $h=1,\dots,m$;
\end{itemize} 
then any minimizer of $J$ in $\mathcal{N}$ is positive, and hence system \eqref{system} has a least energy positive solution.
\end{theorem}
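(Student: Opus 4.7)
The strategy, following the outline before the statement, is to argue by contradiction: suppose a minimizer $\mathbf{u}_{\min} \in \Ncal$ obtained from Theorem \ref{thm:c_B_is_achieved} has some component $u_{i_0} \equiv 0$ (say $i_0 \in I_{h_0}$), and construct a competitor $\tilde{\mathbf{v}} \in \Ncal$ satisfying $J(\tilde{\mathbf{v}}) < c$. Since $\sum_{i \in I_{h_0}}\|u_i\|_i \neq 0$ by the definition \eqref{def N} of $\Ncal$, some $j_0 \in I_{h_0}$ satisfies $u_{j_0} \not\equiv 0$. Introduce the one-parameter perturbation
\[
v_{i_0}^\eps := \eps\, u_{j_0}, \qquad v_{j_0}^\eps := \sqrt{1-\eps^2}\, u_{j_0}, \qquad v_k^\eps := u_k \ \text{ for } k \neq i_0, j_0.
\]
Hypothesis \emph{(iii)} forces $\lambda_{i_0} = \lambda_{j_0} = \lambda_{h_0}$, so $\|v_{i_0}^\eps\|_{i_0}^2 + \|v_{j_0}^\eps\|_{j_0}^2 = \|u_{j_0}\|_{j_0}^2$, and the group norms $b_h := \sum_{i \in I_h}\|v_i^\eps\|_i^2$ are $\eps$-independent.

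Project $\mathbf{v}^\eps$ onto $\Ncal$ by positive group-wise scalings $\tilde v_i^\eps = t_h(\eps) v_i^\eps$ for $i \in I_h$; since $\mathbf{u}_{\min} \in \Ncal$ corresponds to $\mathbf{t}(0)=\mathbf{1}$, existence and uniqueness of $\mathbf{t}(\eps)$ near $\eps=0$ follow by continuity from the Nehari system. On $\Ncal$ the functional collapses to $J = \tfrac{1}{4}\sum_i\|u_i\|_i^2$, so that
\[
J(\tilde{\mathbf{v}}^\eps) = \tfrac{1}{4}\mathbf{b}^\top A(\mathbf{v}^\eps)^{-1}\mathbf{b}, \qquad A_{hk}(\mathbf{v}) := \sum_{i \in I_h,\, j \in I_k}\beta_{ij}\int_\Omega v_i^2 v_j^2,
\]
and $A(\mathbf{u}_{\min})\mathbf{1} = \mathbf{b}$. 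A direct calculation, using that $\beta_{i_0 k} = \beta_{j_0 k} = \beta_{h_0}$ for every $k \in I_{h_0}\setminus\{i_0, j_0\}$ (which annihilates the intra-group cross contributions to leading order), yields
\[
\sum_{h,k}\bigl(A_{hk}(\mathbf{v}^\eps) - A_{hk}(\mathbf{u}_{\min})\bigr) = 2\eps^2\Lambda + O(\eps^4),
\]
with
\[
\Lambda := (\beta_{h_0} - \beta_{j_0 j_0})\int_\Omega u_{j_0}^4 + \sum_{j \notin I_{h_0}}(\beta_{i_0 j} - \beta_{j_0 j})\int_\Omega u_{j_0}^2 u_j^2.
\]
A first-order expansion of $A(\mathbf{v}^\eps)^{-1}$ combined with $A(\mathbf{u}_{\min})^{-1}\mathbf{b} = \mathbf{1}$ then gives $J(\tilde{\mathbf{v}}^\eps) - c = -\tfrac{\eps^2}{2}\Lambda + O(\eps^4)$.

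Hence everything reduces to proving $\Lambda > 0$, which is the main obstacle. Hypothesis \emph{(i)} rearranges into $\beta_{h_0} - \max_{i \in I_{h_0}}\beta_{ii} \ge \beta_{h_0}/\alpha$, yielding a positive first term of size at least $(\beta_{h_0}/\alpha)\|u_{j_0}\|_4^4$. Hypothesis \emph{(ii)} and Cauchy--Schwarz bound the second term in absolute value by $\frac{2K}{\alpha d^2}\|u_{j_0}\|_4^2\sum_{j=1}^d\|u_j\|_4^2$. Dominance of the first over the second is then obtained by controlling $\sum_j\|u_j\|_4^2$ through the Sobolev bounds $\|u_j\|_4^2 \le \|u_j\|_j^2/S_j$ and the a priori energy identity $\sum_i\|u_i\|_i^2 = 4c$, noting that the constants $\alpha d^2$ and $K$ (whose structure is that of \eqref{eq:special_shape_of_K}) are calibrated precisely to absorb these up-to-$d$ cross terms. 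With $\Lambda > 0$ secured, $J(\tilde{\mathbf{v}}^\eps) < c$ for small $\eps > 0$ contradicts the minimality of $\mathbf{u}_{\min}$. Therefore every component $u_i$ is nontrivial, and the strong maximum principle upgrades each to strict positivity, so the minimizer is a least energy positive solution of \eqref{system}.
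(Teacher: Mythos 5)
Your perturbation construction is essentially an explicit realization of the second--order optimality condition used in the paper's proof (the paper tests $d^2 J(\mf{u}_{\min})[\mf{v},\mf{v}]\ge 0$ with $v_{i_0}=u_{j_0}$ and $v_i=0$ otherwise, which is exactly the tangent direction of your curve $\mf{v}^\eps$); the resulting quadratic form is the same quantity $\Lambda$, so the overall route is the same. The leading-order computation of $\sum_{h,k}(A_{hk}(\mf{v}^\eps)-A_{hk}(\mf{u}_{\min}))=2\eps^2\Lambda+O(\eps^4)$ is correct, including the cancellation of intra-group cross terms thanks to $\beta_{i_0 j}=\beta_{j_0 j}=\beta_{h_0}$.

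However, there is a genuine gap at the crucial final step where you need $\Lambda>0$: you choose $j_0$ as \emph{any} index in $I_{h_0}$ with $u_{j_0}\not\equiv 0$. This is not enough. Estimating as you indicate gives
\[
\Lambda \ge \frac{|u_{j_0}|_{L^4}^2}{\alpha}\Bigl(\beta_{h_0}|u_{j_0}|_{L^4}^2 - \tfrac{8K\bar C}{d^2 S}\Bigr)
= \frac{|u_{j_0}|_{L^4}^2}{\alpha}\Bigl(\beta_{h_0}|u_{j_0}|_{L^4}^2 - \tfrac{S}{2d^2}\Bigr),
\]
where $K=S^2/(16\bar C)$ is the constant of Theorem~\ref{thm:c_B_is_achieved} (see Remark~\ref{rem: constant K}); note that this is \emph{not} the constant \eqref{eq:special_shape_of_K} you invoke, which is only the sharper value in the purely cooperative case. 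The first term of $\Lambda$ scales like $|u_{j_0}|_{L^4}^4$ while the second scales like $|u_{j_0}|_{L^4}^2$, so if $|u_{j_0}|_{L^4}^2$ is small the sign is controlled by the possibly negative cross term, and the argument collapses. What is missing is the lower bound of Lemma~\ref{lemma:lower_bounds}: one must choose $j_0$ to be the index $p\in I_{h_0}$ with $\beta_{h_0}|u_p|_{L^4}^2\ge \delta/d = S/(2d^2)$, exactly as the paper does. Only then does the above inequality, combined with the strict inequality in hypothesis~($i$) in the form $\beta_{pp}/\beta_{h_0} < (\alpha-1)/\alpha$, yield $\Lambda>0$ and thus the contradiction. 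Without identifying and selecting this ``fat'' component the proof does not close, and the phrase ``the constants are calibrated precisely to absorb these cross terms'' hides the very step where the estimate is delicate.

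A minor remark: for the projection $\mf{t}(\eps)$ to exist, be smooth, and for the formula $J(\tilde{\mf{v}}^\eps)=\tfrac14\,\mf{b}^\top A(\mf{v}^\eps)^{-1}\mf{b}$ to make sense, you need $A(\mf{u}_{\min})=M_B(\mf{u}_{\min})$ to be invertible; this is guaranteed by Lemma~\ref{lemma:positive_definite} (the minimizer lies in $\Ecal$, so $M_B(\mf{u}_{\min})$ is positive definite), and should be stated explicitly.
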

We observe that the previous two results seem to be the first dealing simultaneously with strong and weak cooperation.

With these results in hands, together with those in \cite{LiuWang,SatoWang,Soave}, we can give quite a complete picture for the problem of the existence of a least energy solution for system \eqref{system} when $d \ge 1$. We have already recalled in Subsection \ref{sub:review} that the existence of a least energy positive solution has been proved in regimes of \emph{strong cooperation}, \emph{competition}, \emph{weak cooperation}, \emph{arbitrarily large cooperation vs. competition}, \emph{strong cooperation vs. arbitrarily large competition}. Thanks to Corollary \ref{corol: comp e weak coop}, Theorems \ref{thm: weak coop and strong coop 1} and \ref{thm: weak coop and strong coop 2}, we have existence results in regimes of \emph{competition or weak cooperation} and \emph{weak cooperation or competition vs. strong competition}. Recalling the non-existence of positive solution when $\beta_{ii} \le \beta_{ij} \le \beta_{jj}$ proved in \cite{BartschWang, Sirakov}, at least from a qualitative point of view the existence of a least energy positive solutions is proved in all the admissible cases.  

We would like to stress that in all the previous theorems, unlike in \cite{SatoWang,Soave}, the bounds only depend on $\lambda_i$ and $\beta_{ii}$, for $i=1,\ldots,d$.

\medskip

\subsection{Main results: non existence of least energy positive solution in $\R^N$}\label{subsect:nonexistence_inR^N}

As it was observed in Subsection \ref{sub:review}, in the purely competitive case ($\beta_{ij}<0$ for every $i\neq j$), a least energy positive solution in $\R^N$ (note that here the adjective ``radial" is omitted) is not achieved, cf. Theorem 1 in \cite{LinWei}. We have already recalled that in regimes of pure and strong cooperation, least energy solutions in $\R^N$ does exists, and are naturally radially symmetric and radially decreasing with respect to some point. A case of mixed cooperation and competition is treated in \cite[Theorem 3]{LinWeiErratum}, but only when exactly one  state repels all the other, and the remaining ones have small attractive coefficients. The general case is left open in \cite{LinWei,LinWeiErratum}. 

As we pointed out, our Theorem \ref{thm:c_B_is_achieved} in case $\Omega=\R^N$ works only in a radial setting, and allows much more combinations of cooperation with competition coefficients. It seems natural to ask whether this restriction, in general, is completely justified or not. We can prove that it is in several situations.

\begin{theorem}\label{thm: les in space comp}
Under \eqref{basic assumption in space}, let $d\geq 2$, and let $\mf{a}$ be a $m$-decomposition of $d$, with $1 \le m \le d$. If 
\begin{itemize}
\item $\beta_{ij} \ge  0 \quad \forall (i,j) \in \mathcal{K}_1$,
\item $\beta_{ij}  \le 0 \quad \forall (i,j) \in \mathcal{K}_2$, and there exist $h_1\neq h_2$ such that $\beta_{ij}<0$ for every $(i,j)\in I_{h_1}\times I_{h_2}$;
\end{itemize}
then
\begin{equation}\label{def di l}
l:= \inf_{\mathcal{M}} J
\end{equation}
is not achieved, where
\begin{equation}\label{def: Nehari non radiale}
\mathcal{M}:= \left\{ \mf{u} \in H^1(\R^N,\R^d) \left|\  
\begin{array}{l}
\sum_{i\in I_h} \|u_i\|_i \neq 0 \text{ and } \sum_{i \in I_h} \partial_i J (\mf{u}) u_i=0 \\
\text{for every } h=1,\dots,m
\end{array}\right. \right\}.
\end{equation} 
\end{theorem}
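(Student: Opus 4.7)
The plan is to identify $l$ with the ``split'' level $\sum_{h=1}^m c_h$, where $c_h$ is the least energy of the purely cooperative sub-system associated to the indices in $I_h$ on its Nehari manifold $\mathcal{N}_h$ (well-defined and strictly positive since $\beta_{ii}>0$ and $\beta_{ij}\ge 0$ for $(i,j)\in I_h^2$), and then to derive a contradiction from the strict inequality $J(\bar{\mf u})>\sum_h c_h$ that any minimizer $\bar{\mf u}\in\mathcal M$ must satisfy.

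\emph{Step 1: $l\le \sum_h c_h$ by translation.} Given $\eps>0$, I pick almost-minimizers $\mf v^{(h)}_\eps\in\mathcal{N}_h$ with $J_h(\mf v^{(h)}_\eps)\le c_h+\eps$, translate each $\mf v^{(h)}_\eps$ by a center $y_h\in\R^N$ with $|y_h-y_k|\ge R$ for $h\ne k$, and obtain a test vector $\mf u_R\in H^1(\R^N,\R^d)$ whose norms and within-group integrals are preserved, while the cross-group integrals $\int u_i^2u_j^2$ for $(i,j)\in\mathcal{K}_2$ vanish as $R\to\infty$. I then project $\mf u_R$ onto $\mathcal M$ via a block-rescaling $\mf t^R=(t_1^R,\dots,t_m^R)$ solving the coupled Nehari system $\sum_{i\in I_h}\|v_i^{(h)}\|_i^2=\sum_k t_k^2\sum_{i\in I_h, j\in I_k}\beta_{ij}\int u_i^2 u_j^2$: since the Jacobian at $(\mf t,R)=((1,\dots,1),\infty)$ is diagonal with positive entries, the implicit function theorem provides $\mf t^R\to(1,\dots,1)$ for $R$ large, and the corresponding energies converge to $\sum_h J_h(\mf v^{(h)}_\eps)\le\sum_h c_h+m\eps$. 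This step is the \textbf{main obstacle}: the projection requires controlling the fully coupled $m$-dimensional Nehari system of the whole problem, but the decay of the off-diagonal interactions makes the implicit function argument tractable.

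\emph{Step 2: a minimizer is a positive solution.} Assume by contradiction that $\bar{\mf u}\in\mathcal M$ attains $l$. I set $s_h:=\sum_{i\in I_h}\|\bar u_i\|_i^2>0$ and $a_{hk}:=\sum_{i\in I_h, j\in I_k}\beta_{ij}\int\bar u_i^2 \bar u_j^2$, so that $a_{hh}>0$, $a_{hk}\le 0$ for $h\ne k$, and the Nehari identity reads $s_h=\sum_k a_{hk}$. This yields $\sum_{k\ne h}|a_{hk}|=a_{hh}-s_h<a_{hh}$, so the matrix $A:=(a_{hk})$ is strictly diagonally dominant and nonsingular. Testing the Lagrange identity $J'(\bar{\mf u})=\sum_h\lambda_h G_h'(\bar{\mf u})$ (with $G_h:=\sum_{i\in I_h}\pa_i J\cdot u_i$) against each sub-vector $(\bar u_i)_{i\in I_k}$ produces the linear system $A\lambda=0$, whence $\lambda\equiv 0$ and $\bar{\mf u}$ weakly solves the system. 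The strong maximum principle applied to each equation $-\Delta \bar u_i+(\lambda_i-\sum_j\beta_{ij}\bar u_j^2)\bar u_i=0$ then forces every $\bar u_i$ to be either $\equiv 0$ or $>0$; by the definition of $\mathcal M$, each group contains at least one strictly positive component.

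\emph{Step 3: strict gap and contradiction.} Rescaling $(\bar u_i)_{i\in I_h}$ by $t_h:=\sqrt{s_h/a_{hh}}\in(0,1]$ places it on $\mathcal{N}_h$, so $c_h\le s_h^2/(4a_{hh})$. Combined with $J(\bar{\mf u})=\tfrac14\sum_h s_h$ (another consequence of Nehari), I get
\[
J(\bar{\mf u})-\sum_{h=1}^m c_h\ge \frac{1}{4}\sum_{h=1}^m s_h\Bigl(1-\frac{s_h}{a_{hh}}\Bigr)\ge \frac{s_{h_1}}{4}\Bigl(1-\frac{s_{h_1}}{a_{h_1 h_1}}\Bigr).
\]
Step 2 provides $i^*\in I_{h_1}$ and $j^*\in I_{h_2}$ with $\bar u_{i^*},\bar u_{j^*}>0$; combined with $\beta_{i^*j^*}<0$ (and $\beta_{ij}\le 0$ for the other pairs in $I_{h_1}\times I_{h_2}$), this forces $a_{h_1 h_2}<0$, hence $s_{h_1}<a_{h_1 h_1}$, and the right-hand side is strictly positive. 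Together with Step 1 this gives $J(\bar{\mf u})>\sum_h c_h\ge l=J(\bar{\mf u})$, a contradiction.
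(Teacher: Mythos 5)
Your proposal is correct, and it follows the same overall strategy as the paper: compare $l$ with the ``split'' level $\sum_h l_h$ (the sum of the least energies of the cooperative sub-systems), prove $l\le\sum_h l_h$ by translating almost-separated test vectors and projecting onto $\mathcal M$ via the linear Nehari system, and then derive a strict contradiction for a hypothetical minimizer using the strong maximum principle together with the strictly negative coupling $\beta_{i^*j^*}<0$. I checked the algebra in Steps 2 and 3 (strict diagonal dominance of $A=(a_{hk})$, the rescaling factor $t_h^2=s_h/a_{hh}\le 1$, the identity $E_h(t_h\bar{\mf u}_h)=s_h^2/(4a_{hh})$, and the final chain $l=J(\bar{\mf u})>\sum_h c_h\ge l$), and it all works.

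Where you genuinely deviate from the paper, you obtain real simplifications. The paper first proves (Lemma 5.4 in the LaTeX source) that each $l_h$ is \emph{achieved} by a radial minimizer that coincides with the minimizer of a relaxed problem $\tilde l_h$ over an inequality-constrained set $\widetilde{\mathcal M}_h$ (this uses Schwarz symmetrization and compactness of $H^1_{\rad}\hookrightarrow L^4$), and separately establishes exponential decay of these minimizers to make the cross-group interactions vanish under translation. You replace all of this with $\eps$-almost-minimizers in $\mathcal N_h$: the vanishing of cross-group integrals for translated $L^4$ functions is a soft density argument, so you need neither existence of minimizers for $l_h$ nor exponential decay. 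Likewise, the paper's final contradiction goes through the relaxed level $\tilde l_h$ and the ``any minimizer of $\tilde l_h$ lies in $\mathcal M_h$'' step, whereas your Step 3 computes the energy gap $J(\bar{\mf u})-\sum_h c_h>0$ directly, which at the same time re-derives the lower bound $l\ge\sum_h c_h$ and delivers the strict inequality in one stroke. A small typo: the coupled Nehari system in Step 1 should be linear in $t_k$ (you are using the $\sqrt{t}$-rescaling convention), not quadratic as written, but the intent and the ensuing $\mf t^R\to\mf 1$ argument are clearly correct.
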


For the reader's convenience, we recall that $I_h$, $\mathcal{K}_1$ and $\mathcal{K}_2$ have been defined in \eqref{def Ih}. Notice also that $\mathcal{M}$ is the Nehari set associated to the $m$-decomposition $\mf{a}$ \emph{without the radial constraint}.

\subsection{Main results: partial symmetry}

We now pass to the statements regarding partial symmetry. First, we recall the following.

\begin{definition}
Let $\Omega \subset \R^N$ be radial with respect to $0$. A function $u: \Omega \to \R$ is called \emph{foliated symmetric Schwarz} with respect to the direction $p \in \S^{N-1}$ if $u$ depends only on $(r,\theta):= (|x|, \arccos( x\cdot p / |x|) )$, and is non-increasing in $\theta$. \\
We write that the vector valued function $(u_1,\dots,u_k)$ is foliated Schwarz symmetric with respect to $p$ if each $u_i$ is foliated Schwarz symmetric with respect to $p$.\\
We write that $(u_1,\dots,u_h)$ and $(u_{h+1},\dots,u_k)$ are foliated Schwarz symmetric with respect to antipodal directions if there exists $p \in \S^{N-1}$ such that $u_1,\dots,u_h$ are foliated Schwarz symmetric with respect to $p$, while $u_{h+1},\dots,u_k$ are foliated Schwarz symmetric with respect to $-p$.
\end{definition}

We analyse the symmetry properties of $\mf{u}_{\min}$ when $\Omega$ is a bounded radial domain (since in $\R^N$ we deal with radial solutions, an analogue statement would be trivial in that setting). The following result is stated in the greatest possible generality. 

\begin{theorem}\label{thm: partial symmetry}
Let $\Omega \subset \R^N$ be a bounded radially symmetric domain, with $N =2,3$. Let $d \ge 2$, let $\mf{a}=(a_0,\ldots, a_m)$ be a $m$-decomposition of $d$ for some $1 \le m \le d$, assume that \eqref{basic assumption} holds, and take $K$ as in Theorem \ref{thm:c_B_is_achieved}. Assume that $B$ satisfies
\[
\beta_{ij} \geq 0 \quad \forall (i,j)\in \Kcal_1, \qquad -\infty <\beta_{ij}< K \quad \forall (i,j)\in \Kcal_2,
\]
and for some $l:=a_{\bar h}$ with $\bar h\in \{1,\ldots, m\}$,
\begin{equation}\label{eq:sign assumption}
\begin{split}
&\beta_{ij}>0 \quad \text{for every $(i,j) \in \{1,\dots,l\}^2 \cup \{l+1,\dots,m\}^2$} \\
& \beta_{ij} < 0 \quad \text{for every $(i,j) \in \{1,\dots,l\} \times \{l+1,\dots,m\}$}.
\end{split}
\end{equation}
Then any nonnegative $\mf{u}$ achieving $c = \inf_{\mathcal{N}} J$ is such that $(u_1,\dots,u_l)$ and $(u_{l+1},\dots,u_d)$ are foliated Schwartz symmetric with respect to antipodal directions.
\end{theorem}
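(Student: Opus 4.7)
The strategy is to use polarization (two-point rearrangement) adapted to the cooperation/competition splitting, together with a polarization-based characterization of foliated Schwarz symmetry in the spirit of \cite{TavaresWeth, WangWillem}.

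Fix a closed half-space $H\subset\R^N$ with $0\in\partial H$, let $\sigma_H$ denote the reflection through $\partial H$, and for a nonnegative $u$ set $u^H(x)=\max\{u(x),u(\sigma_H x)\}$ on $H$ and $u^H(x)=\min\{u(x),u(\sigma_H x)\}$ on $H^c$ (and symmetrically $u^{H^c}$). Since $\Omega$ is radially symmetric around the origin, $\sigma_H\Omega=\Omega$ and polarization preserves $\|u\|_i$ as well as $\int u^2$. Motivated by \eqref{eq:sign assumption}, I introduce the mixed polarization
\[
\mf{u}^{\star H}:=(u_1^H,\ldots,u_l^H,u_{l+1}^{H^c},\ldots,u_d^{H^c}),
\]
so that components of the first super-group are polarized toward $H$ and those of the second super-group toward $H^c$.

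The key inequality, following from the classical estimates $\int (u^H)^2(v^H)^2\ge \int u^2v^2$ (same-direction) and $\int (u^H)^2(v^{H^c})^2\le \int u^2v^2$ (opposite-direction), combined with $\beta_{ij}>0$ inside each super-group and $\beta_{ij}<0$ between super-groups, is
\[
J(\mf{t}\cdot\mf{u}^{\star H})\le J(\mf{t}\cdot\mf{u}) \quad\text{for every }\mf{t}=(t_1,\ldots,t_m)\in(0,\infty)^m,
\]
where $\mf{t}\cdot\mf{u}$ stands for the group-wise scaling $t_{h(i)}u_i$ (with $h(i)$ the index of the group containing $i$). The within-group cooperation built into $\Ncal$ gives it a genuine Nehari-type fibered structure: for every admissible $\mf{w}$, the map $\mf{t}\mapsto J(\mf{t}\cdot\mf{w})$ has a unique positive maximizer that lies in $\Ncal$, so $\sup_{\mf{t}>0}J(\mf{t}\cdot\mf{w})\ge c$. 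Applied to $\mf{w}=\mf{u}^{\star H}$ and chained with the previous energy inequality, this gives
\[
c\le \sup_{\mf{t}}J(\mf{t}\cdot\mf{u}^{\star H})\le \sup_{\mf{t}}J(\mf{t}\cdot\mf{u})=J(\mf{u})=c,
\]
so equality holds throughout.

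From the equality cases of the polarization inequalities, for every pair $(i,j)$ with $\beta_{ij}\neq 0$ the functions $u_i,u_j$ must be comonotone on every $\sigma_H$-orbit when $\beta_{ij}>0$ and antimonotone when $\beta_{ij}<0$. Using the strict positivity of the intra-super-group couplings and the strict negativity of the inter-super-group couplings in \eqref{eq:sign assumption}, this forces the components of the first super-group to be jointly oriented one way on each $\sigma_H$-orbit, and those of the second jointly in the opposite way. Equivalently, for every admissible $H$, either $\mf{u}^{\star H}=\mf{u}$ or $\mf{u}^{\star H}=\mf{u}\circ\sigma_H$. The polarization characterization of foliated Schwarz symmetry with antipodal directions (see \cite{TavaresWeth, WangWillem}) then produces a direction $p\in\S^{N-1}$ such that $(u_1,\ldots,u_l)$ is foliated Schwarz symmetric with respect to $p$ and $(u_{l+1},\ldots,u_d)$ with respect to $-p$.

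The most delicate step is the fibered Nehari projection used in the third paragraph: one must verify that for every admissible $\mf{w}$ the group-wise scaled energy $\mf{t}\mapsto J(\mf{t}\cdot\mf{w})$ admits a unique positive maximizer which lies on $\Ncal$, so that $\sup_{\mf{t}}J(\mf{t}\cdot\mf{w})\ge c$. This relies on the positivity of the within-super-group coefficients and should be available within the machinery already used to prove Theorem \ref{thm:c_B_is_achieved} in \cite{Soave}; transferring it to the polarized configuration $\mf{u}^{\star H}$ is the technical heart of the argument. A secondary point is the rigidity step: passing from pairwise (anti)monotonicity on each orbit to a single joint orientation of each super-group uses that inside a super-group every pair has strictly positive coupling, so the "co-monotone with" relation is an equivalence on each $\sigma_H$-orbit.
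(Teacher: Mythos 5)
Your overall strategy matches the paper's: you polarize the two super-groups in opposite directions, compare the fibered-Nehari suprema of $\mf{u}^{\star H}$ and $\mf{u}$, and use the resulting chain of equalities to force symmetry. However, two steps are genuinely gapped. First, the statement in your third paragraph that \emph{every} admissible $\mf{w}$ has a unique positive fibered maximizer lying on $\Ncal$ is false in general: the map $\mf{t}\mapsto J(\mf{t}\cdot\mf{w})$ is strictly concave precisely when the $m\times m$ matrix $M_B(\mf{w})$ is positive definite, which fails for arbitrary $\mf{w}$ when competition is present. The paper closes this by noting that $\mf{u}^{\star H}\in\widetilde{\Ncal}$ with $\sum_h\|\mf{u}^{\star H}_h\|_h^2 = \sum_i\|u_i\|_i^2 = 4c\le 4\bar C$ (polarization preserves norms and $H^1_0$), so Lemma~\ref{lemma:positive_definite} gives $\mf{u}^{\star H}\in\mathcal{E}$, and then shows separately that the maximizer has all coordinates strictly positive. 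You flag this as ``the technical heart'' but do not verify it; this is precisely what needs proof, and it is where the constraint $\beta_{ij}<K$ on $\Kcal_2$ enters.

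Second, your rigidity step is weaker than what is needed. Equality in $\int u_i^2u_j^2=\int (u_i^{\star H})^2(u_j^{\star H})^2$ for each pair only tells you, for a.e.\ $\sigma_H$-orbit $\{x,\sigma_H x\}$, that $\bigl(u_i(x)-u_i(\sigma_Hx)\bigr)\bigl(u_j(x)-u_j(\sigma_Hx)\bigr)$ has the right sign; it says nothing on orbits where either factor vanishes, and it does not propagate to a single global orientation of each super-group. Your claim ``Equivalently, for every admissible $H$, either $\mf{u}^{\star H}=\mf{u}$ or $\mf{u}^{\star H}=\mf{u}\circ\sigma_H$'' does not follow from orbit-wise comonotonicity alone. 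The paper instead upgrades the equality of minima into the statement that $\mf{u}^{\star H}$ is itself a \emph{solution} of \eqref{system} (natural constraint, Lemma~\ref{lemma:N_is_manifold}), and then Proposition~\ref{prop: abstract criterion} applies the strong maximum principle to $w:=u_{1,H}-u_1\ge 0$, which satisfies a linear elliptic inequality in $\Omega\cap H$ with $w(rp)=0$; this yields $w\equiv 0$, and back-substitution into the equation with $u_1>0$ forces $u_{j,H}\equiv u_j$ for $j\le l$ and $u_{j,\widehat H}\equiv u_j$ for $j>l$. This PDE-based rigidity is what converts the pointwise orbit information into the required global identity, and it is the piece missing from your argument.
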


The interpretation of the theorem is the following. A $m$-decomposition of $d$ induces a separation of the $d$ components into $m$ different groups: $\{u_i:\ i\in I_h\}$, $h=1,\ldots, m$ (with $I_h$ defined in \eqref{def Ih}).  We join the $m$ groups into two macro groups, the first one collecting the first $\bar h$ groups $\{u_i:\ i\in I_1\cup \ldots \cup I_{\bar h}\}$, the second one the remaining components. If we assume that the relation between components in the same macro-group is purely cooperative, while the relation between components in different macro-groups is purely competitive (see assumption \eqref{eq:sign assumption}), then pairs of components of different macro-groups are foliated Schwartz symmetric with respect to antipodal directions. 

Up to our knowledge, in the literature all the symmetry results so far were for systems in the pure cooperative or pure competitive cases. We point out also that Theorem \ref{thm: partial symmetry} in the particular case $d=2$ and $l=1$ (competition between two components) permits to recover Theorem 1.3 in \cite{TavaresWeth} in the present setting. Moreover, when $m=1$ and $l=d$ (purely cooperative setting), we recover and significantly extend the results in \cite{WangWillem} for $N=2,3$. Further remarks and comments are postponed to Section \ref{sec: partial symmetry}, but we would like to remark here that, in general, least energy positive solutions of \eqref{system} when $\Omega$ is radial are \emph{not} radially symmetric, see Remark 5.4 in \cite{TavaresWeth}, the results of Section 3 in \cite{WangWillem}, and Corollary 0.5 in \cite{SatoWang}. This break of symmetry can be caused  either due to the presence of competition terms, or by the non convexity of the underlying domain.

Clearly one can now combine Theorem \ref{thm: partial symmetry} with the existence of least energy positive solutions of Subsection \ref{sub: existence}. As particular relevant cases, we would like to highlight that we have existence and symmetry of least energy positive solutions whenever $B$ satisfies the following:
\begin{itemize}
\item For a system with $d\geq 2$ equations, if $0<\beta_{ij}<K$ (where $K$ is given by \eqref{eq:special_shape_of_K}), then \eqref{system 2 comp} admits a least energy positive solution, whose components are  foliated Schwartz symmetric with respect to the same point.
\item In the special case $d=3$, and assuming moreover that $\lambda_1=\lambda_2=\lambda$, we have symmetry for least energy solutions $(u_1,u_2,u_3)$ in all the possible situations described in \eqref{bar b_nderbar_b}:
\begin{itemize}
\item if either $\beta_{12}=\beta_{23}=\beta_{13}>\bar \beta$ and $\lambda_3=\lambda$, or $\beta_{12}>\bar \beta$ and $0<\beta_{13}=\beta_{23}<\underline \beta$, then $(u_1,u_2,u_3)$ are foliated Schwartz symmetric with respect to the same point.
\item if $\beta_{12}>\bar \beta$ and $\beta_{13}=\beta_{23}<0$, then $(u_1,u_2)$ and $u_3$ are foliated Schwartz symmetric with respect to antipodal points.
\end{itemize}
\end{itemize}

\subsection{Structure of the paper} In Section \ref{sec: ex non-negative} we prove Theorem \ref{thm:c_B_is_achieved}. Although in the introduction we presented the partial symmetry results at last, we point out that Theorem \ref{thm: partial symmetry} regards not only least energy positive solutions, but constrained minimizers found under the assumptions of Theorem \ref{thm:c_B_is_achieved}. For this reason, the proof of Theorem \ref{thm: partial symmetry} is the object of Section \ref{sec: partial symmetry}. Section \ref{sec: new ex results} is devoted to the proof of the new results on least energy positive solutions: Theorems \ref{thm: weak coop and strong coop 1} and \ref{thm: weak coop and strong coop 2}. Finally, in the last section we prove Theorem \ref{thm: les in space comp}.

\section{Existence of nonnegative minimizers}\label{sec: ex non-negative}

This section is devoted to the proof of Theorem \ref{thm:c_B_is_achieved}. In the literature, minimization on Nehari type sets as $\mathcal{N}$ is usually addressed by firstly studying properties of minimizing sequences, and then showing that any limit of such a sequence can be projected on $\mathcal{N}$. The second part of this argument is extremely delicate from a technical point of view when the number of components is arbitrary (see e.g. Lemmas 2.6-2.8 in \cite{Soave}). In what follows we use a different approach based on the Ekeland's variational principle for the constrained functional $J|_{\mathcal{N}}$. As we shall see, this permits both to avoid several technicalities, and to obtain an \emph{explicit constant} $K$ depending only on $\beta_{ii}$ and $\lambda_i$. Fix $\Omega$ be a bounded domain, $d \ge 2$, and take $\mf{a} = (a_0,a_1,\dots,a_m)$ be a $m$-decomposition of $d$ for some $1 \le m \le d$. We always assume that \eqref{basic assumption} is in force, and we use both the notation introduced in Subsection \ref{sub: existence}, and the following:
\begin{itemize}
\item we let
\begin{equation}\label{def of S}
S:= \inf_{i=1,\dots,d} \inf_{ u \in H_0^1(\Omega) \setminus \{0\} } \frac{ \| u \|_i^2 }{|u|_{L^4}^2}.
\end{equation}
By Sobolev embedding, $S >0$.
\item let $\mf{u} \in H_0^1(\Omega,\R^d)$. We set, for $h=1,\dots,m$,
\begin{equation}\label{def uh}
\mf{u}_h:= \left(u_{a_{h-1}+1},\dots,u_{a_h}\right) \in (H_0^1(\Omega))^{a_h-a_{h-1}}.
\end{equation}
The space $(H_0^1(\Omega))^{a_h-a_{h-1}}$ is naturally endowed with scalar product and norm
\[
\langle \mf{v}^1, \mf{v}^2 \rangle_h := \sum_{i \in I_h} \langle v_{i}^1, v_i^2 \rangle_i \quad \text{and} \quad \|\mf{v}\|_h^2:=\langle \mf{v}, \mf{v} \rangle_h.
\]
\item It will be useful to consider the following set, which contains some weak $H^1_0$ limits of elements of $\Ncal$:
\begin{equation}\label{def tilde N}
\widetilde \Ncal= \left\{ \mf{u} \in H^1_0(\Omega;\R^d) \left|\  
\begin{array}{l}
\|\mf{u}_h\|_h \neq 0 \text{ and } \sum_{i \in I_h} \partial_i J (\mf{u}) u_i \leq 0 \\
\text{for every } h=1,\dots,m
\end{array}\right.\right\}.
\end{equation}
\end{itemize}
Finally, we introduce
\begin{equation}\label{def E}
\mathcal{E}:= \left\{ \mf{u} \in H^1_0(\Omega;\R^d):\  \ M_B(\mf{u})  \text{ is strictly diagonally dominant}  \right\},
\end{equation}
where the $m \times m$ matrix $M_B(\mf{u})$ is defined by 
\[
M_B(\mf{u})_{hk}:= \left( \sum_{(i,j) \in I_h \times I_k} \int_{\Omega} \beta_{ij} u_i^2 u_j^2\right)_{h,k=1,\dots,m}.
\]
Recall that a $m \times m$ matrix $A=(a_{ij})_{i,j}$ is strictly diagonally dominant if for every $i=1,\dots,m$ there holds $a_{ii}> \sum_{j \neq i} |a_{ij}|$. Recall that, if a square matrix $A$ is strictly diagonally dominant and has positive diagonal terms, then it is positive definite. Thus, in particular, for each $\mf{u}\in \mathcal{E}$, $M_B(\mf{u})$ is positive definite. This will be a key property of the matrices $M_B(\mf{u})$. Since we deal with $m\times m$ matrices, with $m$ arbitrary, it does not seem easy to check directly that for some $\mf{u}$ the matrix $M_B(\mf{u})$ is positive definite, and will be always proved by checking that it is strictly diagonally dominant (a condition which involves only the verifications of some inequalities).

Firstly, we recall some basic facts about the geometric structure of $\mathcal{N} \cap \mathcal{E}$, for which we refer to Proposition 1.1 and Remark 9 in \cite{Soave}. The set $\mathcal{E}$ is an open set in $H_0^1(\Omega, \R^d)$. The set $\mathcal{N}$ is defined by a systems of inequalities ($\|\mf{u}_h\|_h >0$ for $h=1,\dots,m$) and a system of equations $G_h(\mf{u}) = 0$, where
\begin{equation}\label{def G_h}
G_h(\mf{u}): = \|\mf{u}\|_h^2 - \sum_{k=1}^m \sum_{(i,j) \in I_h \times I_k} \int_{\Omega} \beta_{ij} u_i^2 u_j^2.  
\end{equation}
It is not difficult to check that $\mathcal{N} \cap \mathcal{E} \neq \emptyset$, and that if $\mf{u} \in \mathcal{N} \cap \mathcal{E}$, then $\mathcal{N}$ is a smooth manifold of codimension $m$ in a neighbourhood of $\mf{u}$. Furthermore, one can show that $\mathcal{N} \cap \mathcal{E}$ is a natural constraint, that is, critical points of $J$ restricted on $\mathcal{N} \cap \mathcal{E}$ are critical points of $J$ in the whole space $H_0^1(\Omega,\R^d)$.

Finally, we recall that with the notation previously introduced the functional $J$ can be written as
\[
J(\mf{u})= \frac{1}{2}\sum_{h=1}^m \|\mf{u}_h\|_h^2 - M_B(\mf{u}) \mf{1} \cdot \mf{1},
\]
where $\mf{1}=(1,\dots,1)$, and $\cdot$ denotes the Euclidean scalar product, and that the constrained functional $J|_{\mathcal{N}}$ reads as 
\begin{equation}\label{funct on N}
J(\mf{u}) = \frac{1}{4} \sum_{i=1}^d \|u_i\|_i^2 = \frac{1}{4} \sum_{i,j=1}^d \int_{\Omega} \beta_{ij} u_i^2 u_j^2,
\end{equation}
so that $J|_{\mathcal{N}}$ is coercive and bounded from below. Thus, it makes sense to search for a constrained minimizer for
\[
c= \inf_{\mathcal{N}} J \ge 0,
\]
where we recall that $\mathcal{N}$ has been defined in \eqref{def N}.
The following is a refinement of \cite[Lemma 2.1]{Soave}, and it is the first step to obtain the constant $K>0$ in Theorem \ref{thm:c_B_is_achieved}.

\begin{lemma}\label{eq:c_B_universalbound}
It holds
\[
c := \inf_{\mathcal{N}} J \leq \bar C,
\]
with
\begin{equation}\label{eq:uniform_upper_estimate}
\bar C = \frac{1}{4} \max_{h=1,\ldots, m}\min_{i\in I_h} \left\{\frac{(1+\lambda_i)^2}{\beta_{ii}}\right\}  \cdot \mathop{\inf_{\Omega \supset \Omega_1,\dots,\Omega_m \text{ open}} }_{\Omega_i \cap \Omega_j = \emptyset,\ (i\neq j)}  \sum_{h=1}^m  S^2(\Omega_h), 
\end{equation}
$S(\Omega)$ being the best Sobolev constant for the embedding $H^1(\Omega)\hookrightarrow L^4(\Omega)$.
\end{lemma}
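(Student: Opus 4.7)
The strategy is to build an explicit admissible vector $\mf{u}\in\Ncal$ whose energy can be bounded by the right-hand side of \eqref{eq:uniform_upper_estimate}. Given any family of pairwise disjoint open sets $\Omega_1,\ldots,\Omega_m \subset \Omega$, I would select, for each $h\in\{1,\ldots,m\}$, an index $i_h\in I_h$ realizing $\min_{i\in I_h}(1+\lambda_i)^2/\beta_{ii}$, and set every component of $\mf{u}$ to zero except the distinguished one in each group: $u_j\equiv 0$ for $j\in I_h\setminus\{i_h\}$, and $u_{i_h}=t_h\varphi_h$ with $\varphi_h\in H_0^1(\Omega_h)$ extended by zero to all of $\Omega$, for some scaling factor $t_h>0$ to be determined.

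Because the supports of the $\varphi_h$'s lie in pairwise disjoint sets and each group contributes a single nontrivial component, all mixed integrals $\int_\Omega u_i^2u_j^2$ with $i\neq j$ vanish, regardless of whether the pair is in $\Kcal_1$ or $\Kcal_2$. Hence the Nehari-type constraints $\sum_{i\in I_h}\partial_i J(\mf{u})u_i=0$ reduce, for each $h$, to the single scalar identity $\|u_{i_h}\|_{i_h}^2=\beta_{i_h i_h}\int_\Omega u_{i_h}^4$, which is satisfied by the unique choice
\[
t_h^2=\frac{\|\varphi_h\|_{i_h}^2}{\beta_{i_h i_h}\int_\Omega\varphi_h^4}.
\]
Thus $\mf{u}\in\Ncal$, so from \eqref{funct on N},
\[
J(\mf{u})=\tfrac{1}{4}\sum_{h=1}^m\|u_{i_h}\|_{i_h}^2=\tfrac{1}{4}\sum_{h=1}^m\frac{\|\varphi_h\|_{i_h}^4}{\beta_{i_h i_h}\int_\Omega\varphi_h^4}.
\]

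To conclude, I would use the elementary comparison $\|\varphi\|_{i_h}^2\leq(1+\lambda_{i_h})\|\varphi\|_{H^1(\Omega_h)}^2$ (immediate when $\lambda_{i_h}\geq 0$; when $-\mu_1(\Omega)<\lambda_{i_h}<0$ a coarser form of the same bound is obtained by combining Poincar\'e with the assumption on the spectrum). This gives
\[
\frac{\|\varphi_h\|_{i_h}^4}{\beta_{i_h i_h}\int_\Omega\varphi_h^4}\leq\frac{(1+\lambda_{i_h})^2}{\beta_{i_h i_h}}\left(\frac{\|\varphi_h\|_{H^1(\Omega_h)}^2}{\bigl(\int\varphi_h^4\bigr)^{1/2}}\right)^2.
\]
Choosing $\varphi_h$ along a minimizing sequence for the Sobolev quotient on $\Omega_h$, the parenthesis tends to $S(\Omega_h)^2$, yielding
\[
c\leq J(\mf{u})\leq\tfrac{1}{4}\Bigl(\max_{h}\frac{(1+\lambda_{i_h})^2}{\beta_{i_h i_h}}\Bigr)\sum_{h=1}^m S(\Omega_h)^2.
\]
By the definition of the indices $i_h$ the maximum on the right equals $\max_h\min_{i\in I_h}(1+\lambda_i)^2/\beta_{ii}$, and taking the infimum over admissible disjoint families $(\Omega_1,\ldots,\Omega_m)$ produces $c\leq \bar C$.

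The main technical subtlety I anticipate concerns the last step: since $S(\Omega_h)$ is the constant of the $H^1(\Omega_h)\hookrightarrow L^4(\Omega_h)$ (Neumann-type) embedding, whereas the test functions $\varphi_h$ must lie in $H^1_0(\Omega_h)$ in order for their zero extension to belong to $H^1_0(\Omega)$, one has to argue that a near-minimizing sequence concentrated in the interior of $\Omega_h$ can be cut off so as to satisfy the Dirichlet condition with negligible loss in the Sobolev quotient. By inner approximation, this reduces to the case of balls, where the argument is standard; all other ingredients (the disjoint-support trick, the scalar Nehari scaling, and the norm comparison) are routine.
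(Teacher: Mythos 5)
Your construction is exactly the paper's: in each group $I_h$ select the index $i_h$ minimizing $(1+\lambda_i)^2/\beta_{ii}$, take a single nonzero component per group supported in a disjoint open set $\Omega_h$ (so that every mixed integral $\int u_i^2u_j^2$, $i\neq j$, vanishes), project onto $\Ncal$ by the scalar scaling $t_h^2=\|\varphi_h\|_{i_h}^2/(\beta_{i_hi_h}|\varphi_h|_{L^4}^4)$, estimate $\|\cdot\|_{i_h}^2\leq(1+\lambda_{i_h})\|\cdot\|_{H^1}^2$, and pass to the infimum over test functions and over disjoint families.

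The one point to correct is your closing paragraph. Since $\varphi_h$ must lie in $H^1_0(\Omega_h)$ for its zero extension to belong to $H^1_0(\Omega)$, the quantity your argument (and the paper's) naturally produces is the \emph{Dirichlet} constant
\[
\inf\left\{\frac{\int_{\Omega_h}(|\nabla u|^2+u^2)}{|u|_{L^4(\Omega_h)}^2}:\, u\in H^1_0(\Omega_h)\setminus\{0\}\right\},
\]
and that is how $S(\Omega_h)$ should be read here, the phrase ``embedding $H^1(\Omega)\hookrightarrow L^4(\Omega)$'' in the statement being loose. Under a literal Neumann reading your proposed repair — cutting off a near-minimizing sequence ``concentrated in the interior of $\Omega_h$'' — does not work: Neumann near-minimizers have no reason to decay near $\partial\Omega_h$, and for a fixed (reasonable) $\Omega_h$ the Dirichlet quotient infimum strictly exceeds the Neumann one, so no cut-off of a Neumann minimizer can recover the smaller value. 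Once $S(\Omega_h)$ is read in the Dirichlet sense, your proof needs no such maneuver and coincides with the paper's.
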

\begin{proof}
For each $h$, denote $i_h$ the index achieving $\min_{i\in I_h} \left\{\frac{(1+\lambda_i)^2}{\beta_{ii}}\right\}$. Take $\tilde u_{i_1},\ldots,\tilde u_{i_m}\not \equiv 0$ such that $\tilde u_{i_h}\cdot \tilde u_{i_k} \equiv 0$ whenever $h\neq k$, and define $t_h=\|\tilde u_h\|_h/ (\sqrt{\beta_{i_hi_h}}|\tilde u_h|_{L^4}^2)$.  Define $\mf{\bar u}$ such that $\bar u_{i_h}=t_h \tilde u_{i,h}$ for $h=1,\dots,m$, and $\bar u_i=0$ for $i\neq i_1,\ldots, i_m$. It is clear that $\mf{\bar u}_h\not\equiv 0$ for every $h$, and that $\mf{\bar u}\in \Ncal$. Thus, by definition,
\begin{align*}
c&\leq J(\mf{\bar u})=\frac{1}{4}\sum_{i=1}^d \| \bar u_i\|_i^2 =\frac{1}{4}\sum_{h=1}^m t_h^2 \|\tilde u_{i_h}\|_{i_h}^2\\
	&\leq \frac{1}{4}\sum_{h=1}^m  \frac{(1+\lambda_{i_h})^2}{\beta_{i_h i_h}} \frac{\|\tilde u_{h}\|_{1}^4}{|\tilde u_{h}|_{L^4}^4} \leq  \max_{h=1,\ldots, m} \frac{(1+\lambda_{i_h})^2}{\beta_{i_h i_h}} \sum_{h=1}^m\frac{\|\tilde u_{h}\|_{1}^4}{|\tilde u_{h}|_{L^4}^4}
	\qedhere.
\end{align*}
\end{proof}

The following is a key result, both for the existence result of this section, as well as for the symmetry one in the following.

\begin{lemma}\label{lemma:positive_definite}
For $K=S^2/(16\bar C)>0$ (which depends only on $\lambda_i$, $\beta_{ii}$, $i=1,\ldots, d$) we have that, whenever
\[
-\infty < \beta_{ij}< K \qquad \forall (i,j)\in \Kcal_2,
\]
the following inclusion holds:
\[
\widetilde \Ncal \cap \left\{\mf{u}:\ \sum_{i=1}^d \|u_i\|_i^2 \leq 8 \bar C\right\}\subset \mathcal{E},
\]
where we recall that $\widetilde \Ncal$ and $\mathcal{E}$ have been defined in \eqref{def tilde N} and \eqref{def E} respectively.
\end{lemma}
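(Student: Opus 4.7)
The plan is to take an arbitrary $\mathbf{u}$ in the intersection $\widetilde{\mathcal{N}} \cap \{\sum_i \|u_i\|_i^2 \le 8\bar C\}$ and to verify directly that $M_B(\mathbf{u})$ is strictly diagonally dominant, i.e.\ for every $h=1,\dots,m$
\[
M_B(\mathbf{u})_{hh} > \sum_{k \neq h} |M_B(\mathbf{u})_{hk}|.
\]
The first ingredient comes for free from the definition of $\widetilde{\mathcal{N}}$: rewriting the inequality $\sum_{i\in I_h}\partial_i J(\mathbf{u})u_i \le 0$ via $G_h$ from \eqref{def G_h} yields $\|\mathbf{u}_h\|_h^2 \le \sum_{k=1}^m M_B(\mathbf{u})_{hk}$, hence
\[
M_B(\mathbf{u})_{hh} \ge \|\mathbf{u}_h\|_h^2 - \sum_{k\neq h} M_B(\mathbf{u})_{hk}.
\]

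To handle the off–diagonal terms, which can have either sign because $\beta_{ij}<K$ does not constrain $\beta_{ij}$ from below, I would split $\beta_{ij}=\beta_{ij}^+-\beta_{ij}^-$ for $(i,j)\in \mathcal{K}_2$ and write $M_B(\mathbf{u})_{hk}=A_{hk}-B_{hk}$ where $A_{hk},B_{hk}\ge 0$ collect respectively the positive and negative parts. Then $|M_B(\mathbf{u})_{hk}|\le A_{hk}+B_{hk}$, so combining with the previous display gives
\[
M_B(\mathbf{u})_{hh} - \sum_{k\neq h}|M_B(\mathbf{u})_{hk}| \;\ge\; \|\mathbf{u}_h\|_h^2 - 2\sum_{k\neq h}A_{hk}.
\]
Thus the whole proof reduces to showing that the right-hand side is strictly positive.

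The main technical step is to control $\sum_{k\neq h}A_{hk}$ strictly in terms of $\|\mathbf{u}_h\|_h^2$. Setting $\beta^* := \max_{(i,j)\in\mathcal{K}_2}\beta_{ij}^+$, so that $\beta^*<K$ (the maximum is over finitely many indices), Cauchy--Schwarz in the integral and the Sobolev embedding constant $S$ of \eqref{def of S} give $\int_\Omega u_i^2 u_j^2 \le \|u_i\|_i^2\|u_j\|_j^2/S^2$, and hence
\[
2\sum_{k\neq h}A_{hk} \;\le\; \frac{2\beta^*}{S^2}\|\mathbf{u}_h\|_h^2 \sum_{j\notin I_h}\|u_j\|_j^2 \;\le\; \frac{16\beta^*\bar C}{S^2}\,\|\mathbf{u}_h\|_h^2,
\]
where the second inequality uses the a priori bound $\sum_i\|u_i\|_i^2\le 8\bar C$. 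The choice $K=S^2/(16\bar C)$ is then exactly what is needed: since $\beta^*<K$ we obtain $16\beta^*\bar C/S^2<1$, and combining with the preceding estimates and the fact that $\|\mathbf{u}_h\|_h>0$ for $\mathbf{u}\in\widetilde{\mathcal{N}}$, we conclude
\[
M_B(\mathbf{u})_{hh}-\sum_{k\neq h}|M_B(\mathbf{u})_{hk}| \ge \Bigl(1-\tfrac{16\beta^*\bar C}{S^2}\Bigr)\|\mathbf{u}_h\|_h^2 > 0,
\]
that is $\mathbf{u}\in\mathcal{E}$.

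The main obstacle is precisely the fact that large negative $\beta_{ij}$'s are allowed on $\mathcal{K}_2$, so one cannot simply dominate $|M_B(\mathbf{u})_{hk}|$ by a multiple of $K$; the trick of replacing $|M_B(\mathbf{u})_{hk}|$ by $A_{hk}+B_{hk}$ and then using the $\widetilde{\mathcal{N}}$–inequality to absorb the $B_{hk}$ into the diagonal is what makes the negative parts work in our favour, leaving only the positive parts, which are controlled by the Sobolev bound and the energy level constant $\bar C$.
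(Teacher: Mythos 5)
Your proof is correct and takes essentially the same approach as the paper: exploit the $\widetilde{\mathcal{N}}$-inequality to absorb the negative off-diagonal contributions into the diagonal, then bound the remaining positive part by Cauchy--Schwarz, the Sobolev constant $S$, and the constraint $\sum_i\|u_i\|_i^2\le 8\bar C$, leading to the same $K=S^2/(16\bar C)$. The only (cosmetic) difference is that you decompose each individual $\beta_{ij}$ into positive and negative parts, whereas the paper splits the aggregated block sums $M_B(\mathbf{u})_{hk}$ by sign; both reduce the target to an estimate of the form $\|\mathbf{u}_h\|_h^2 - 2\,(\text{positive off-diagonal contribution})>0$ and lead to the same final inequality, while your introduction of $\beta^*=\max_{\mathcal K_2}\beta_{ij}^+<K$ makes the strictness of the conclusion slightly cleaner, since $1-16\beta^*\bar C/S^2>0$ is a strict inequality between constants (you only need to note separately the trivial case $m=1$, where $\mathcal K_2=\emptyset$).
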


\begin{proof}
Let us prove that $M_B(\mf{u})$ is strictly diagonally dominant, that is, for each $h=1.\ldots, m$,
\begin{equation}\label{eq:equivalent_to_strictdiagdominant}
\sum_{(i,j)\in I_h^2}\int_\Omega \beta_{ij} u_i^2u_j^2 >\mathop{\sum_{k=1}^m}_{k\neq h}\left| \sum_{(i,j)\in I_h\times I_k} \int_\Omega \beta_{ij} u_i^2u_j^2\right|
\end{equation}
Some of the terms inside the absolute value might be positive, while others might be negative. Suppose, without loss of generality, that there exists $\bar m\in \{0,\ldots, m\}$ such that
\[
\mathop{\sum_{k=1}^m}_{k\neq h}\left| \sum_{(i,j)\in I_h\times I_k} \int_\Omega \beta_{ij} u_i^2u_j^2\right|= - \mathop{\sum_{k=1}^{\bar m}}_{k \neq h} \sum_{(i,j)\in I_h\times I_k} \int_\Omega \beta_{ij} u_i^2u_j^2 + \mathop{\sum_{k={\bar m+1}}^{m}}_{k \neq h} \sum_{(i,j)\in I_h\times I_k} \int_\Omega \beta_{ij} u_i^2u_j^2.
\]
On the other hand, since $\mf{u}\in \widetilde \Ncal$, for each $h=1,\ldots, m$,
\[
\sum_{(i,j)\in I_h^2}\int_\Omega \beta_{ij} u_i^2u_j^2\geq \sum_{i\in I_h}\|u_i\|_i^2- \mathop{\sum_{k=1}^m}_{k\neq h} \sum_{(i,j)\in I_h\times I_k} \int_\Omega \beta_{ij}u_i^2 u_j^2.
\]
Thus \eqref{eq:equivalent_to_strictdiagdominant} is true if we show that
\begin{equation}\label{eq:implication_to_strictdiagonaldominant}
 \sum_{i\in I_h}\|u_i\|_i^2 > 2 \mathop{\sum_{k={\bar m+1}}^{m}}_{k \neq h} \sum_{(i,j)\in I_h\times I_k} \int_\Omega \beta_{ij} u_i^2u_j^2.
\end{equation}
which holds if, for $(i,j)\in \Kcal_2$, $\beta_{ij}< K:=S^2/(16 \bar C)$. Indeed, recalling that by assumption $\sum_i \|u_i\|_i^2 \le 8 \bar C$, we have
\begin{align*}
2 \mathop{\sum_{k={\bar m+1}}^{m}}_{k \neq h} \sum_{(i,j)\in I_h\times I_k} \int_\Omega \beta_{ij} u_i^2u_j^2 &< \frac{2K}{S^2} \mathop{\sum_{k={\bar m+1}}^{m}}_{k \neq h} \sum_{(i,j)\in I_h\times I_k} \| u_i\|_i^2 \|u_j\|_j^2\\
					&\leq \frac{16K\bar C}{S^2}  \sum_{i\in I_h} \| u_i\|_i^2 = \sum_{i\in I_h} \| u_i\|_i^2,
\end{align*}
thanks to the choice of $K$.
\end{proof}
 
An immediate consequence is the following.
\begin{lemma}\label{lemma:N_is_manifold}
If 
\[
-\infty<\beta_{ij}< K \qquad \forall (i,j)\in \Kcal_2,
\]
then $\Ncal$ is a manifold at each $\mf{u}\in \Ncal$ with $J(\mf{u})<2\bar C$. Moreover, constrained critical points of $J|_{\mathcal{N}}$ such that $J(\mf{u})<2 \bar C$ are in fact free critical points of $J$. 
\end{lemma}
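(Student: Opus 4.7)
The plan is to reduce the statement to the already-recalled geometric facts about $\mathcal{N}\cap\mathcal{E}$ by showing that any $\mf{u}\in\mathcal{N}$ with $J(\mf{u})<2\bar C$ automatically lies in $\mathcal{E}$. Since the equations defining $\mathcal{N}$ together with membership in $\mathcal{E}$ are exactly the setting in which the paper has stated (from Proposition 1.1 and Remark 9 in \cite{Soave}) that $\mathcal{N}$ is locally a smooth codimension-$m$ submanifold and a natural constraint, once we check the inclusion into $\mathcal{E}$ both conclusions follow for free.

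First, I would use the identity $J(\mf{u})=\tfrac{1}{4}\sum_{i=1}^d\|u_i\|_i^2$ valid on $\mathcal{N}$, established in \eqref{funct on N}. The assumption $J(\mf{u})<2\bar C$ is therefore equivalent to $\sum_{i=1}^d\|u_i\|_i^2<8\bar C$. Second, the inclusion $\mathcal{N}\subset\widetilde{\mathcal{N}}$ is tautological: if $\sum_{i\in I_h}\partial_i J(\mf{u})u_i=0$ for every $h$, then a fortiori this sum is $\leq 0$, and the non-triviality conditions $\sum_{i\in I_h}\|u_i\|_i\neq 0$ coincide with $\|\mf{u}_h\|_h\neq 0$.

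Combining these two observations, $\mf{u}$ belongs to $\widetilde{\mathcal{N}}\cap\{\sum_i\|u_i\|_i^2\leq 8\bar C\}$, which by Lemma \ref{lemma:positive_definite} is contained in $\mathcal{E}$. Thus $\mf{u}\in\mathcal{N}\cap\mathcal{E}$, and the recalled structural properties apply: a neighbourhood of $\mf{u}$ in $\mathcal{N}$ is a smooth manifold of codimension $m$, and the constraint is natural, i.e.\ any critical point of $J|_{\mathcal{N}}$ sitting in this sublevel is a free critical point of $J$ in $H^1_0(\Omega,\R^d)$.

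I do not expect any real obstacle here: the lemma is essentially a bookkeeping statement that packages Lemma \ref{lemma:positive_definite} together with the geometric results already quoted from \cite{Soave}. The only point deserving a one-line verification is that the argument used in \cite{Soave} to show naturality of the constraint goes through verbatim, because positive definiteness of $M_B(\mf{u})$ is exactly what guarantees that the Lagrange multiplier system forces all multipliers to vanish; this is automatic once we know $\mf{u}\in\mathcal{E}$, since strictly diagonally dominant matrices with positive diagonal are positive definite.
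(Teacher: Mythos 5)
Your proof is correct and matches the paper's argument essentially verbatim: both reduce to showing that $J(\mf{u})<2\bar C$ on $\mathcal{N}$ gives $\mf{u}\in\widetilde{\mathcal{N}}\cap\{\sum_i\|u_i\|_i^2\le 8\bar C\}$, invoke Lemma~\ref{lemma:positive_definite} to place $\mf{u}$ in $\mathcal{E}$, and then cite the structural results from \cite{Soave} (Remark 9 for the manifold statement, Proposition 1.2 for naturality of the constraint).
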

\begin{proof}
If $\mf{u}\in \Ncal$ and $J(\mf{u})<2\bar C$, then $\mf{u} \in\widetilde{\mathcal{N}}$ and $\sum_i \|u_i\|_i^2 \le 8\bar C$. Thanks to the previous lemma, we deduce that $\mf{u}\in \mathcal{N} \cap \mathcal{E}$, thus $\mathcal{N}$ is a manifold at $\mf{u}$  (see \cite[Remark 9]{Soave}), while the other conclusion comes from \cite[Proposition 1.2]{Soave}.
\end{proof}

Now we can prove that minimizing sequences for $c$ are also conveniently bounded from below.

\begin{lemma}\label{lemma:lower_bounds}
Take $\delta:=S/(2d)>0$. If 
\[
\beta_{ij} \ge 0 \quad \forall (i,j) \in \mathcal{K}_1 \quad \text{and} \quad -\infty<\beta_{ij} < K \quad \forall (i,j) \in \mathcal{K}_2,
\]
then for every $\mf{u}\in \Ncal$ such that $J(\mf{u})\leq 2\bar C$ there holds
\[
\left(\max_{i,j\in I_h} \beta_{ij} \right) \cdot \sum_{i\in I_h} |u_i|^2_{L^4}\geq \delta \qquad \forall h=1,\dots,m.
\]
\end{lemma}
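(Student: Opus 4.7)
The plan is to exploit the Nehari identity associated to the group $I_h$ together with the bound on cross-group couplings obtained in the proof of Lemma \ref{lemma:positive_definite}. First, from the representation \eqref{funct on N} of $J|_\Ncal$ and the assumption $J(\mf{u}) \le 2 \bar C$, I would extract the global a priori bound $\sum_{i=1}^d \|u_i\|_i^2 \le 8\bar C$; in particular $|u_i|_{L^4}^2 \le \|u_i\|_i^2/S \le 8\bar C/S$ for each $i$, by definition \eqref{def of S} of $S$.

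Next, I would write the Nehari identity corresponding to the index $h$:
\[
\sum_{i \in I_h} \|u_i\|_i^2 = \sum_{(i,j)\in I_h^2} \int_\Omega \beta_{ij} u_i^2 u_j^2 + \mathop{\sum_{k=1}^m}_{k \neq h} \sum_{(i,j) \in I_h \times I_k} \int_\Omega \beta_{ij} u_i^2 u_j^2,
\]
and bound the last term from above exactly as in the proof of Lemma \ref{lemma:positive_definite}: keeping only the positive part of each $\beta_{ij}$ with $(i,j) \in \Kcal_2$ (recalling $\beta_{ij} < K$) and using $\int_\Omega u_i^2 u_j^2 \le \|u_i\|_i^2 \|u_j\|_j^2 /S^2$ together with $\sum_{j \notin I_h}\|u_j\|_j^2 \le 8 \bar C$, the choice $K = S^2/(16 \bar C)$ yields
\[
\mathop{\sum_{k=1}^m}_{k \neq h} \sum_{(i,j) \in I_h \times I_k} \int_\Omega \beta_{ij} u_i^2 u_j^2 \le \frac{1}{2} \sum_{i \in I_h} \|u_i\|_i^2.
\]
Substituting back into the Nehari identity,
\[
\sum_{(i,j)\in I_h^2} \int_\Omega \beta_{ij} u_i^2 u_j^2 \ge \frac{1}{2} \sum_{i \in I_h} \|u_i\|_i^2 \ge \frac{S}{2} \sum_{i\in I_h} |u_i|_{L^4}^2,
\]
again using the definition of $S$.

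Finally I would bound the left-hand side from above by
\[
\sum_{(i,j)\in I_h^2} \int_\Omega \beta_{ij} u_i^2 u_j^2 \le \Big(\max_{(i,j) \in I_h^2} \beta_{ij}\Big) \sum_{(i,j)\in I_h^2} |u_i|_{L^4}^2 |u_j|_{L^4}^2 = \Big(\max_{(i,j) \in I_h^2} \beta_{ij}\Big) \Big(\sum_{i\in I_h} |u_i|_{L^4}^2\Big)^2,
\]
which is legitimate since $\max_{(i,j)\in I_h^2}\beta_{ij} \ge \beta_{ii} > 0$. Since $\|\mf{u}_h\|_h \neq 0$ forces $\sum_{i\in I_h}|u_i|_{L^4}^2 > 0$, I may divide by this quantity and combine with the lower bound to obtain
\[
\Big(\max_{(i,j)\in I_h^2}\beta_{ij}\Big) \sum_{i\in I_h} |u_i|_{L^4}^2 \ge \frac{S}{2} \ge \frac{S}{2d} = \delta,
\]
as claimed. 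The only delicate step is ensuring that the estimate on the cross-group term genuinely matches the choice of $K$; this is essentially a re-bookkeeping of the calculation already performed in Lemma \ref{lemma:positive_definite}, so no new difficulty arises.
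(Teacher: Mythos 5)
Your proof is correct and follows the same overall strategy as the paper: exploit the group-$h$ Nehari identity, absorb the cross-group coupling into $\tfrac12\sum_{i\in I_h}\|u_i\|_i^2$ using the specific choice $K=S^2/(16\bar C)$ and the a priori bound $\sum_i\|u_i\|_i^2\le 8\bar C$, and then upper-bound the intra-group term. The only difference is technical and in your favor: where the paper first applies AM--GM ($u_i^2u_j^2\le\tfrac12(u_i^4+u_j^4)$) and then Cauchy--Schwarz in $i$, picking up a factor $|I_h|\le d$, you apply H\"older directly to get $\sum_{(i,j)\in I_h^2}\beta_{ij}\int u_i^2u_j^2\le\bigl(\max_{i,j\in I_h}\beta_{ij}\bigr)\bigl(\sum_{i\in I_h}|u_i|_{L^4}^2\bigr)^2$ with no extra $d$, so you actually obtain the sharper lower bound $S/2\ge S/(2d)=\delta$.
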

\begin{proof}
Following the proof of Lemma 2.2 in \cite{Soave}, since $\mf{u}\in \Ncal$:
\begin{align*}
S \sum_{i\in I_h} |u_i|_{L^4}^2 &\leq \sum_{i\in I_h} \|u_i\|_i^2 =\sum_{(i,j)\in I_h^2} \int_\Omega \frac{\beta_{ij}}{2}\left(u_i^4+u_j^4\right) + K \sum_{i\in I_h} \sum_{j\not\in I_h} |u_i|_{L^4}^2|u_j|_{L^4}^2\\
						&<\max_{i,j\in I_h}\{\beta_{ij}\} \sum_{i\in I_h}|u_i|_{L^4}^4+ \frac{8KC}{S}\sum_{i\in I_h}|u_i|_{L^4}^2\\
						&<d \max_{i,j\in I_h}\{\beta_{ij}\} \left(\sum_{i\in I_h}|u_i|_{L^4}^2\right)^2+ \frac{S}{2}\sum_{i\in I_h}|u_i|_{L^4}^2. \qedhere
\end{align*}
\end{proof}

Having established the basic properties of minimizing sequences, we can proceed with the core of the argument.

\begin{lemma}
The constrained functional $J|_{\Ncal}$ satisfies the Palais-Smale condition at level $c$, whenever $B$ is such that
\[
\beta_{ij}\geq 0 \quad \forall (i,j)\in \Kcal_1 \quad \text{and} \quad -\infty< \beta_{ij}< K \quad \forall (i,j)\in \Kcal_2.
\]
\end{lemma}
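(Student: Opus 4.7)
Let $(\mf{u}_n) \subset \mathcal{N}$ be a Palais-Smale sequence at level $c$, that is, $J(\mf{u}_n) \to c \le \bar C$ and $\|\nabla J|_{\mathcal{N}}(\mf{u}_n)\|_{*} \to 0$. My plan is to extract a strongly convergent subsequence via the classical combination of boundedness, weak convergence and asymptotic vanishing of the Lagrange multipliers.

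First, using formula \eqref{funct on N}, the identity $J(\mf{u}_n)=\frac14\sum_i\|u_{i,n}\|_i^2$ yields that $\sum_i\|u_{i,n}\|_i^2\to 4c\le 4\bar C$, so the sequence is bounded in $H^1_0(\Omega,\R^d)$ and eventually $\sum_i\|u_{i,n}\|_i^2<8\bar C$. Combined with $\mathcal{N}\subset\widetilde{\mathcal{N}}$, Lemma~\ref{lemma:positive_definite} therefore forces $\mf{u}_n\in\mathcal{E}$ for $n$ large, so the Lagrange multiplier formulation of the constrained problem is available. By the compactness of the Sobolev embedding ($\Omega$ bounded, or $\Omega=\R^N$ in the radial setting), up to a subsequence $\mf{u}_n\rightharpoonup \mf{u}$ in $H^1$ and $\mf{u}_n\to \mf{u}$ in $L^4$.

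Next I would identify the weak limit as an admissible competitor. The $L^4$ convergence combined with Lemma~\ref{lemma:lower_bounds} (applied to each group $I_h$) passes to the limit and gives $\mf{u}_h\not\equiv 0$ for every $h$. Lower semicontinuity of the norm together with the $L^4$ convergence yields $\sum_{i\in I_h}\partial_i J(\mf{u})u_i\le \liminf_n G_h(\mf{u}_n)=0$, hence $\mf{u}\in\widetilde{\mathcal{N}}$. Since $\sum_i\|u_i\|_i^2\le 4c<8\bar C$, Lemma~\ref{lemma:positive_definite} again gives $\mf{u}\in\mathcal{E}$, so the matrix $M_B(\mf{u})$ is strictly diagonally dominant with positive diagonal, hence positive definite.

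The main step, and the one I expect to be the most delicate, is to show that the Lagrange multipliers $\mu_{h,n}$ attached to the constraints $G_h(\mf{u}_n)=0$ vanish in the limit. By the Palais-Smale hypothesis, $\nabla J(\mf{u}_n)-\sum_{k=1}^m\mu_{k,n}\nabla G_k(\mf{u}_n)\to 0$ in $H^{-1}$. Testing componentwise against $u_{i,n}$ and summing over $i\in I_h$, a direct computation using the symmetry $\beta_{ij}=\beta_{ji}$ gives
\[
\sum_{i\in I_h}\partial_i G_k(\mf{u}_n)[u_{i,n}]=-2\,M_B(\mf{u}_n)_{hk}\qquad (h,k=1,\dots,m),
\]
while $\sum_{i\in I_h}\partial_i J(\mf{u}_n)[u_{i,n}]=G_h(\mf{u}_n)=0$. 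Because $(\mf{u}_n)$ is bounded in $H^1$, the error term in this testing is $o(1)$, so the above yields the vector identity
\[
2\,M_B(\mf{u}_n)\,\bmu_n=o(1)\quad\text{in }\R^m,\qquad \bmu_n=(\mu_{1,n},\dots,\mu_{m,n}).
\]
Since $\mf{u}_n\to\mf{u}$ in $L^4$, one has $M_B(\mf{u}_n)\to M_B(\mf{u})$, and the latter is invertible by the previous paragraph; the uniform lower bound on the smallest eigenvalue (via the Gershgorin disc estimate for strictly diagonally dominant matrices together with the control on the diagonal entries from $\mf{u}_n\in\mathcal{N}$) ensures that $M_B(\mf{u}_n)^{-1}$ is uniformly bounded, forcing $\bmu_n\to 0$.

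Finally, once $\bmu_n\to 0$, the boundedness of $\nabla G_k(\mf{u}_n)$ in $H^{-1}$ yields $\nabla J(\mf{u}_n)\to 0$ in $H^{-1}$. Testing $\partial_i J(\mf{u}_n)$ against $u_{i,n}-u_i$, the quartic part $\sum_j\beta_{ij}\int u_{i,n}(u_{i,n}-u_i)u_{j,n}^2$ vanishes in the limit by the strong $L^4$ convergence of all components, leaving
\[
\langle u_{i,n},\,u_{i,n}-u_i\rangle_i\to 0,
\]
which combined with weak convergence gives $\|u_{i,n}\|_i^2\to\|u_i\|_i^2$ and hence $u_{i,n}\to u_i$ strongly in $H^1$. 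This completes the Palais-Smale verification at level $c$.
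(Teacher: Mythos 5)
Your proof is correct and follows essentially the same route as the paper's: bound the PS sequence via the Nehari identity, extract weak $H^1$/strong $L^4$ limits, invoke Lemma~\ref{lemma:positive_definite} to place $\mf{u}_n$ and $\mf{u}$ in $\mathcal{E}$, test the Lagrange-multiplier equation against the group-restricted vectors $\hat{\mf{u}}^h_n$ to obtain $M_B(\mf{u}_n)\bmu_n=o(1)$, and conclude $\bmu_n\to 0$. The only cosmetic difference is that you deduce $\bmu_n\to 0$ from the uniform invertibility of $M_B(\mf{u}_n)$, whereas the paper multiplies the vector identity by $\bmu_n$ and uses positive definiteness directly; both are equally valid and of the same complexity.
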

\begin{proof}
Take $\{\mf{u}_n\} \subset \mathcal{N}$ such that
\begin{equation}\label{constrained PS sequence}
J(\mf{u}_n)\to c,\qquad J'(\mf{u}_n)=\sum_{h=1}^h\lambda_{h,n}G'_h(\mf{u}_n)+\textrm{o}(1),
\end{equation}
where we recall that $G_h$ has been defined in \eqref{def G_h}. We can take $n$ large enough so that $J(\mf{u}_n)\leq 2\bar C$.
By \eqref{funct on N}, up to a subsequence
\[
u_{i,n}\to u_i \qquad \text{ weakly in $H^1_0(\Omega)$, strongly in $L^2(\Omega)\cap L^4(\Omega)$}, 
\]
whence
\begin{itemize}
\item[($i$)] $\mf{u}\in \widetilde \Ncal$ and $\sum_{i} \|u_{i}\|_{i}^2 \le 8 \bar C$;
\item[($ii$)] $M_B(\mf{u}_n) \to M_B(\mf{u})$ component--wise.
\end{itemize}
From ($i$), by Lemma \ref{lemma:positive_definite} we deduce that $\mf{u} \in \mathcal{E}$, so that $M_B(\mf{u})$ is positive definite. Testing the second equation in \eqref{constrained PS sequence} with $\hat{\mf{u}}^h \in H_0^1(\Omega,\R^d)$ defined by 
\[
\hat{u}_{i,n}^h:= \begin{cases} u_{i,n} & \text{if $i \in I_h$} \\
0 & \text{if $i \not \in I_h$},
\end{cases}
\]
we obtain with ($ii$) that
\[
\textrm{\bf o}(1)=M_B(\mf{u}_n) \left(\begin{array}{c} \lambda_{1,n} \\ \vdots \\ \lambda_{m,n} \end{array}\right)=(M_B(\mf{u})+\textrm{\bf o}(1))\left(\begin{array}{c} \lambda_{1,n} \\ \vdots \\ \lambda_{m,n} \end{array}\right);
\]
multiplying by $(\lambda_{1,n},\ldots, \lambda_{m,n})$, and using the fact that $M_B(\mf{u})$ is positive definite, we finally infer
\[
\textrm{\bf o}(1) |(\lambda_{m,n},\ldots, \lambda_{m,n})|\geq C |(\lambda_{1,n},\ldots, \lambda_{m, n})|^2+ \textrm{\bf o}(1) |(\lambda_{1,n},\ldots,\lambda_{m,n})|^2,
\]
yielding $\lambda_{i,n}\to 0$. Moreover, $\{G_h'(\mf{u}_n)\}$ is a uniformly bounded family of operators, thanks to the boundedness of $\{\mf{u}_n\}$, and hence $J'(\mf{u}_n)\to 0$. This means that $\mf{u}_n$ is a standard Palais-Smale sequence, and the result follows easily from now on.
\end{proof}

We are now ready to prove the main result of this section.
 
\begin{proof}[Proof of Theorem \ref{thm:c_B_is_achieved}]
The proof is a simple consequence of what was established before. In fact, $c\geq 0$, hence we can take a minimizing sequence $\mf{u}_n$, which we can choose, by Ekeland's variational principle, to be a Palais-Smale sequence for $J|_{\Ncal}$ at level $c$. Note that the Ekeland's principle is applicable, since by Lemma \ref{lemma:lower_bounds} the set $\mathcal{N} \cap \{J \le  2 \bar C\}$ endowed with the $H^1_0(\Omega)$ topology is a complete metric space. Thus, by the previous lemma, up to a subsequence $\mf{u}_n\to \mf{u}$ strongly in $H^1_0(\Omega)$, and by Lemma \ref{lemma:lower_bounds} this implies that $\mf{u}\in \Ncal$. By convergence, we infer that
\[
J(\mf{u}) = \lim_{n \to \infty} J(\mf{u}_n) = c,
\]
which completes the proof since $\Ncal \cap\{J<2\bar C\}$ is a natural constraint (cf. Lemma \ref{lemma:N_is_manifold}).  
\end{proof}

\begin{remark}\label{rem: constant K}
For future reference, we observe that the constant $K$ is equal to $S^2/(16\bar C)$, where 
\[
S:= \inf_{i=1,\dots,d} \inf_{ u \in H_0^1(\Omega) \setminus \{0\} } \frac{ \| u \|_i^2 }{|u|_{L^4}^2},
\]
and
\[
\bar C = \frac{1}{4} \max_{h=1,\ldots, m}\min_{i\in I_h} \left\{\frac{(1+\lambda_i)^2}{\beta_{ii}}\right\}  \cdot \mathop{\inf_{\Omega \supset \Omega_1,\dots,\Omega_m \text{ open}} }_{\Omega_i \cap \Omega_j = \emptyset,\ (i\neq j)}  \sum_{h=1}^m  S^2(\Omega_h)
\]
(recall Lemma \ref{lemma:positive_definite} and \eqref{eq:uniform_upper_estimate}).
\end{remark}

\begin{remark}
In the particular case of full cooperative systems, we can have a better (and more explicit) constant $K>0$. Thinking for instance at the $d$--decomposition $\textbf{a}=(0,1,\ldots, m)$, one can take:
\begin{equation}\label{eq:a better K}
K:= \frac{\min_{i=1,\ldots, d} \{S_i^2\}}{2\sum_{j=1}^m \frac{S_j^2}{\beta_{jj}}},\quad  \text{ where } S_i:=\inf_{\int_\Omega u^4=1} \|u\|_i^2.
\end{equation}
This constant is similar to the one appearing in assumption (H2) of \cite{WangWillem} (for $d=2$), being ours slightly worse in the framework of Z.-Q. Wang and M. Willem's paper. This is a price to pay from passing from $d=2$ to more equations, since we had to prove that $M_B(\mf{u})$ is positive definite by proving that actually it is strictly diagonally dominant (while in $2\times 2$ matrices one can perform an explicit computation). 
The proof of \eqref{eq:a better K} is not completely immediate, but since we consider this to be a lateral statement, here we just provide some hints. By taking $\tilde w_i$ to be functions achieving $S_i$, we can take $w_i=\sqrt{S_i/\beta_{ii}} \tilde w_i$, which satisfies the equation $-\Delta w_i+\lambda_i w_i=\beta_{ii} w_i^3$, with $\|w_i\|_i^2=S_i^2/\beta_{ii}$. Moreover, since we are in a full cooperative case, it is straightforward to check that $\mf{w}\in \widetilde \Ncal$. By using \eqref{eq:a better K}, $\mf{w}\in \Ecal$ and there exist $0<t_i<1$ such that $(t_1 w_1,\ldots, t_d w_d) \in \Ncal$. Thus $c\leq \sum_{j=1}^d S_j^2/(4\beta_{jj})$, and working through the proof of Lemma \ref{lemma:positive_definite} the rest follows.
\end{remark}

\section{Partial symmetry of nonnegative minimizers}\label{sec: partial symmetry}

We now turn to the problem of the symmetry of least energy positive solutions when \emph{$\Omega$ is a radially symmetric bounded domain of $\R^N$} (which we always assume along this section). Observe that when $\Omega$ is a ball, the classical result by Troy \cite{Troy} (see also \cite{Shaker}) yields that, in the cooperative case, each positive solution is radially symmetric. However, thanks to \cite[Theorem 1.4]{TavaresWeth} (which deal with the competitive case), \cite[Corollary 0.5]{SatoWang} (mixed cooperation and competition case), and \cite[Section 3]{WangWillem} (cooperative case, $\Omega$ an annulus), it is known that \emph{in general} a least energy positive solution is not radial. 
On the other hand, it is natural to expect that least energy solutions inherit part of the symmetric structure of the problem. Theorem \ref{thm: partial symmetry} establishes that this is the case when the competition takes place between two groups of cooperative components, or in a purely cooperative setting. 

\subsection{Comments on Theorem \ref{thm: partial symmetry}}

Due to its general formulation, Theorem \ref{thm: partial symmetry} might not be easy to read and to understand. In this subsection we present several remarks which should help the reader towards this purpose.

\begin{remark} 
The reader could object that only the division in macro-groups is necessary, and that one could drop the original division into $m$ groups considering only a $2$ decomposition of $d$ in $2$ different groups. This is a a particular case of our result, but it is not equivalent, because in our statement we allow to minimize in different Nehari type sets (related to the division in $m$ groups), and so in this way we can deal with a larger class of constrained minimizers.

As an illustrative example, we consider a $3$ components system, separating $(u_1,u_2,u_3)$ into $2$ macro-groups $(u_1,u_2)$ and $u_3$. This can be the result of two different decompositions:
\begin{itemize}
\item let us consider the natural $2$-decomposition $(0,2,3)$. By Theorem 1.7 in \cite{Soave}, it is know that if $\beta_{12}>C\max\{\beta_{11},\beta_{22}\}$ for a positive constant $C>0$ depending on $\lambda_i$, and $\beta_{13}, \beta_{23} \ll -1$, then the minimum of $J$ on $\mathcal{N}_{(0,2,3)}$ is achieved by a \emph{positive} solution of \eqref{system} (here $\mathcal{N}_{(0,2,3)}$ is the Nehari set determined by the $2$-decomposition of $3$). The same result holds true if $\beta_{13},\beta_{23} < 0$ and $\beta_{12} \gg 1$, as proved in Theorem 0.1 in \cite{SatoWang}. In both cases, Theorem \ref{thm: partial symmetry} applies proving that $(u_1,u_2)$ and $u_3$ are foliated Schwartz symmetric with respect to antipodal points. 
\item Let us now consider the $3$-decomposition $(0,1,2,3)$, denoting by $\mathcal{N}_{(0,1,2,3)}$ the corresponding Nehari set. By Corollary \ref{corol: comp e weak coop}, there exists $K>0$ such that if $\beta_{ij} < K$ for every $i \neq j$, then the minimum of $J$ on $\mathcal{N}_{(0,1,2,3)}$ is achieved by a \emph{positive} solution of \eqref{system}. If we assume further that $0<\beta_{12}<K$ and $\beta_{13},\beta_{23} <0$, then by Theorem \ref{thm: partial symmetry} we obtain that $(u_1,u_2)$ and $u_3$ are foliated Schwartz symmetric with respect to antipodal points. 
\end{itemize}
The second result would not have been obtained if we had considered only the $2$-decomposition $(0,2,3)$ in our symmetry result. 
\end{remark}

Let us now make some comments regarding Theorem \ref{thm: partial symmetry} in the \emph{purely cooperative case}, which correspond to $l=d$ in the assumptions. 

\begin{remark}\label{rem:WangWillem}
In \cite{WangWillem}, Z.-Q. Wang and M. Willem proved partial symmetry results in two situations. For a system with $d$ components, they showed that if the infimum of $J$ on the natural Nehari manifold
\[
\left\{ \mf{u} \in \mathbb{H}\, \middle| \, \mf{u} \neq \mf{0} \text{ and } \sum_{i=1}^d \pa_i J(\mf{u}) u_i = 0\right\}
\]
is achieved (which is suitable only for large cooperation rates), then any positive minimizer is such that all the components $(u_1,\dots,u_d)$ are foliated Schwartz symmetric with respect to the same point. Moreover, for systems of $d=2$ components, they showed that if $0<\beta_{12}<C(\beta_{11},\beta_{22},\lambda_1,\lambda_2)$, then the infimum of $J$ on the Nehari set
\[
\left\{ \mf{u} \in \mathbb{H}\, \middle|\, \begin{array}{l} u_i \neq \mf{0} \text{ and } \pa_i J(\mf{u}) u_i = 0\\ 
\text{for $i=1,2$}\end{array}\right\}
\]
is achieved by a positive solution of \eqref{system}, and any minimizer is such that $u_1,u_2$ are foliated Schwartz symmetric with respect to the same point. The restriction $d=2$ is relevant in their proof. 
\end{remark}

Our result recovers both the ones in \cite{WangWillem}, extend the second one to systems with an arbitrary number of components, and provide partial symmetry also when $m$-decompositions of $d$ with $m \neq 1,d$ are considered. For this reason, we think that Theorem \ref{thm: partial symmetry} in the purely cooperative case deserves a statement on its own.

\begin{corollary}\label{thm: partial sym cooperative}
Let $d \ge 2$, and let $\mf{a}$ be a $m$-decomposition of $d$. Take $K>0$ be as in Theorem \ref{thm:c_B_is_achieved}. If
\[
\beta_{ij} > 0 \quad \forall (i,j)\in \Kcal_1, \qquad 0<\beta_{ij}< K \quad \forall (i,j)\in \Kcal_2,
\]
then any nonnegative minimizer $\tilde{\mf{u}}$ of $J$ constrained on $\mathcal{N}$ is such that all the components $\tilde u_1,\dots,\tilde u_d$ are foliated Schwartz symmetric with respect to the same point.
\end{corollary}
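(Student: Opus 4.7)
The plan is to deduce Corollary \ref{thm: partial sym cooperative} directly from Theorem \ref{thm: partial symmetry} by choosing the splitting index $\bar h$ so that the ``competitive macro-group'' in \eqref{eq:sign assumption} is empty. Concretely, I would take $\bar h = m$, whence $l := a_{\bar h} = a_m = d$; the second tuple $(\tilde u_{l+1},\ldots,\tilde u_d)$ and the cross-index set $\{1,\ldots,l\} \times \{l+1,\ldots,d\}$ are then both empty.

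With this choice, \eqref{eq:sign assumption} collapses to the single requirement $\beta_{ij} > 0$ for every $(i,j) \in \{1,\ldots,d\}^2$, which holds thanks to $\beta_{ii} > 0$ from \eqref{basic assumption} together with the strict positivity assumed on $\Kcal_1 \cup \Kcal_2$ in the corollary; the negativity condition on the empty cross-set is vacuous. The background hypotheses of Theorem \ref{thm: partial symmetry} ($\beta_{ij} \ge 0$ on $\Kcal_1$ and $\beta_{ij} < K$ on $\Kcal_2$) are immediate. The theorem then yields that $(\tilde u_1,\ldots,\tilde u_d)$ and the empty tuple are foliated Schwarz symmetric with respect to antipodal directions $p,-p$; the second assertion being vacuous, what remains is precisely the corollary.

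The substantive content therefore lies in Theorem \ref{thm: partial symmetry}, which I would prove via polarization with respect to halfspaces $H \subset \R^N$ such that $0 \in \partial H$. In the cooperative degeneration at hand, every component of the minimizer is polarized in the \emph{same} halfspace $H$ (in the general theorem, the two macro-groups would be polarized in opposite halfspaces). Since every $\beta_{ij} > 0$, each coupling integral $\int_\Omega \beta_{ij} u_i^2 u_j^2$ does not decrease under polarization, so the polarized vector lies in $\widetilde{\Ncal}$. Lemma \ref{lemma:positive_definite} then permits projecting it back onto $\Ncal$ by a group-wise scaling $(t_1,\ldots,t_m)$ with $0 < t_h \le 1$, and minimality of $\tilde{\mf{u}}$ combined with the representation \eqref{funct on N} of $J|_{\Ncal}$ forces $t_h = 1$ for every $h$, i.e.\ equality in the polarization inequality for every pair $(i,j)$.

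The main obstacle I foresee is turning these pairwise equalities into a \emph{consistent global} choice. The equality case of the polarization inequality for nonnegative $u_i,u_j$ permits either $\{u_i^H = u_i,\ u_j^H = u_j\}$ or $\{u_i^H = u_i \circ \sigma_H,\ u_j^H = u_j \circ \sigma_H\}$ a.e., and one must exclude ``mixed'' outcomes across distinct pairs of indices. Since each $\tilde u_i$ is strictly positive (strong maximum principle applied to \eqref{system} in the cooperative regime), this rigidity propagates the binary choice consistently through all indices $i = 1,\ldots,d$: for every such $H$, either $\tilde u_i^H = \tilde u_i$ for all $i$ or $\tilde u_i^H = \tilde u_i \circ \sigma_H$ for all $i$. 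By the standard characterization of foliated Schwarz symmetry, this is equivalent to the existence of a single $p \in \S^{N-1}$ with respect to which all $\tilde u_i$ are foliated Schwarz symmetric, which is the statement of the corollary.
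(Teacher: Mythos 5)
Your deduction of the corollary from Theorem \ref{thm: partial symmetry} is correct and is precisely what the paper intends: take $\bar h = m$, so $l = a_m = d$, making the second macro-group and the cross-index set in \eqref{eq:sign assumption} empty (you are right to read those index sets with upper bound $d$ rather than $m$; that is a misprint in the statement of the theorem). The sign condition then reduces to $\beta_{ij}>0$ on all of $\{1,\dots,d\}^2$, which follows from \eqref{basic assumption} together with the corollary's hypotheses, and the ``antipodal'' conclusion collapses to a single direction $p$. The paper gives no separate proof of the corollary, so this specialization is exactly its implicit argument.

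Where you diverge from the paper is in the sketch of Theorem \ref{thm: partial symmetry} itself, and that divergence hides a genuine gap. After obtaining $\tilde t_h = 1$ for all $h$, the paper does \emph{not} extract symmetry from the equality case of the polarization inequality. Instead it observes that $\mf{u}^H \in \Ncal$ with $J(\mf{u}^H) = c < 2\bar C$, so by Lemma \ref{lemma:N_is_manifold} the polarized vector $\mf{u}^H$ is itself a solution of \eqref{system}, and then it invokes Proposition \ref{prop: abstract criterion}: a strong maximum principle comparison of $w := u_{1,H} - u_1$ on $\Omega \cap H$ that, in a single stroke, forces $w \equiv 0$ on $\Omega \cap H$ and then propagates to all other components through the right-hand side of \eqref{eq22TaWe}. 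Your route through the equality case of $\int_\Omega u_i^2 u_j^2 \le \int_\Omega (u_i^H)^2 (u_j^H)^2$ is more delicate than you acknowledge: equality only yields the \emph{pointwise} dichotomy $\bigl(u_i(x) - u_i(\sigma_H x)\bigr)\bigl(u_j(x) - u_j(\sigma_H x)\bigr) \ge 0$ for a.e.\ $x \in \Omega \cap H$, which does not by itself force the same branch to hold on all of $\Omega \cap H$; ruling out a mixed partition requires an additional continuity/connectedness argument that you do not supply. Moreover, the strict positivity of \emph{every} component, which you invoke to propagate the choice across indices, is not guaranteed for a merely nonnegative minimizer (the Nehari constraint only ensures one nontrivial component per group). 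So the corollary deduction is fine, but to prove the theorem you should follow the paper's maximum-principle route rather than the equality-case-of-polarization route.
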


\begin{remark}\label{rem: properties minimizers}
Concerning the proof of Theorem \ref{thm: partial symmetry}, in the literature the partial symmetry of solutions of elliptic equations is often obtained through an $\inf \sup$ characterization. Dealing with systems, it turns out to be very complicated to obtain such a variational characterization; for instance, this is why the proof of Theorem 1.2 in  \cite{WangWillem} works only for systems with $2$ components. Thus, we think that it is worth to point out that we will not use any $\inf \sup$ characterization, basing our argument directly on the constrained minimality.
\end{remark}

\subsection{Proof of Theorem \ref{thm: partial symmetry}}
In what follows, without loss of generality, we suppose that $\Omega$ is radial with respect to $0$. We will use polarization techniques, and hence at first we recall some definitions which are by now classic.

Assume $H$ is a closed half-space in $\R^N$. We denote by  $\sigma_{H}: \R^N \to \R^N$  the reflection with respect to the boundary $\partial H$ of $H$. For a measurable function $w : \R^N \to \R$ we define the polarization $w_H$ of $w$ relative to $H$ by
\[
w_H(x) =
\left\{
\begin{array}{l}
\max\{ w(x), w(\sigma_H(x)) \}, \quad x \in H,\\
\min\{ w(x), w(\sigma_H(x))  \}, \quad x \in \R^N \setminus H.
\end{array}
\right.
\]

We consider the set $\mathcal{H}_0$ of all closed half-spaces $H$ in $\R^N$ such that $0 \in \partial H$. Given an unitary vector $p \in \S^{N-1}$, we denote by $\mathcal{H}_{0}(p)$ the set of all closed half-spaces $H \in \mathcal{H}_0$ such that $p \in \textrm{int}(H)$.

Recall that a function $f:\R^N\to \R$ is said to be foliated Schwarz symmetric with respect to a unitary vector $p$ if it is axially symmetric with respect to the axis $\R p$ and nonincreasing in the polar angle $\theta= \arccos( p \cdot  x/|x|) \in [0, \pi]$.  We mention that, up to our knowledge, the link between polarization and foliated Schwarz symmetry appeared firstly in \cite{SmetsWillem, Brock}. We would like to mention also the precursory works \cite{Ahlfors,Baernstein2,BrockSolynin} that brought to light the relation between polarizations and rearrangements in many different settings, and refer to the survey \cite{Weth} for a  detailed history of the subject. The following is a useful alternative characterization of foliated Schwarz symmetry, and we refer to \cite[Lemma 4.2]{Brock} (see also \cite[Proposition 2.7]{Weth}) for the proof.

\begin{lemma}\label{lemma: equivalent charact for Sch symmetry}
Let $\Omega$ a radial set centered at the origin, and let $u:\Omega\to \R$ be a continuous function. Then $u$ is foliated Schwarz symmetric with respect to $p\in \mathbb{S}^{N-1}$ if, and only if, for every $H\in \Hcal_0(p)$ we have $u(x)\geq u(\sigma_H(x))$ whenever $x\in \Omega\cap H$.
\end{lemma}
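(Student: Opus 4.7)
\medskip

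\noindent\textbf{Proof plan for Lemma \ref{lemma: equivalent charact for Sch symmetry}.}

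The plan is to argue each implication separately; the forward direction is a direct computation with reflections, while the converse relies on a limiting polarization argument combined with a geometric selection of half-spaces.

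For the forward implication, assume $u$ is foliated Schwarz symmetric with respect to $p$, so that $u(x)=U(|x|,\theta(x))$ with $\theta(x)=\arccos(p\cdot x/|x|)$ and $U(r,\cdot)$ nonincreasing. Let $H\in \Hcal_0(p)$ and write $H=\{y\in \R^N:\, y\cdot n\geq 0\}$ for some unit normal $n$. Since $p\in \textrm{int}(H)$, one has $p\cdot n>0$. For $x\in \Omega\cap H$, the reflection is $\sigma_H(x)=x-2(x\cdot n)n$, and in particular $|\sigma_H(x)|=|x|$ and
\[
p\cdot x - p\cdot \sigma_H(x)=2(x\cdot n)(p\cdot n)\geq 0.
\]
Since $\arccos$ is decreasing, this yields $\theta(x)\leq \theta(\sigma_H(x))$, and the monotonicity in $\theta$ gives $u(x)\geq u(\sigma_H(x))$.

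For the converse, I would first establish axial symmetry with respect to the axis $\R p$. Given two points $y,z\in \Omega$ on the same rotational orbit around $\R p$ (so $|y|=|z|$ and $\theta(y)=\theta(z)$), consider the hyperplane $\Pi$ through $0$ that is the perpendicular bisector of the segment $[y,z]$. Then $\Pi$ contains the axis $\R p$, so $p\in \partial H$ for the half-space $H$ bounded by $\Pi$ that contains $y$. This $H$ is not in $\Hcal_0(p)$, but it is the limit of two families of half-spaces in $\Hcal_0(p)$: rotating $H$ by a small angle $\pm\eps$ around a fixed axis orthogonal to $p$ yields $H_{\pm\eps}\in \Hcal_0(p)$ with $H_{\pm\eps}\to H$. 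Applying the hypothesis to $H_{+\eps}$ and using continuity of $u$ in the limit $\eps\to 0^+$ gives $u(y)\geq u(z)$; applying it to $H_{-\eps}$ and letting $\eps\to 0^+$ reverses the roles and yields $u(z)\geq u(y)$. Hence $u(y)=u(z)$, which establishes axial symmetry.

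For the monotonicity in $\theta$, take $y,z\in \Omega$ with $|y|=|z|$ and $\theta(y)<\theta(z)$. By the just-established axial symmetry, I may rotate $z$ around the axis $\R p$ into the 2-plane spanned by $y$ and $p$, without changing $u(z)$. In that plane, the perpendicular bisector of $[y,z]$ is a hyperplane through the origin whose associated half-space $H$ containing $y$ satisfies $p\in \textrm{int}(H)$ (precisely because $p$ makes a strictly smaller angle with $y$ than with $z$). Applying the hypothesis to this $H$ yields $u(y)\geq u(\sigma_H(y))=u(z)$, which is the desired monotonicity. Combined with axial symmetry, this gives foliated Schwarz symmetry.

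The only subtle step is the limiting argument used for axial symmetry, because the half-space $H$ with $p\in\partial H$ is precisely a boundary case of $\Hcal_0(p)$; one must approach it from both sides by half-spaces whose interiors contain $p$. The monotonicity step is then a clean geometric verification, and I expect no further obstacles once continuity of $u$ is used to pass to the limit.
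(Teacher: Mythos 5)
Your proof is correct, but note that the paper does not give its own proof of this lemma: it simply refers the reader to \cite[Lemma 4.2]{Brock} and \cite[Proposition 2.7]{Weth}, and your argument is essentially a self-contained reproduction of the standard proof found there (polarization plus a limiting reflection argument). Two small remarks. First, in the axial-symmetry step, when you ``rotate $H$ by $\pm\eps$'' around a fixed axis, exactly one of the two rotated half-spaces contains $p$ in its interior while the other does not; to stay inside $\Hcal_0(p)$ you should, for the second rotation, pass to the complementary half-space $\widehat{H_{-\eps}}$ and apply the hypothesis at the point $z$ rather than $y$ (this is clearly what you intend, but the notation $H_{\pm\eps}\in\Hcal_0(p)$ glosses over it). Second, in the monotonicity step the preliminary rotation of $z$ into the $2$-plane spanned by $y$ and $p$ is unnecessary: if $|y|=|z|$ and $\theta(y)<\theta(z)$, then the perpendicular bisector of $[y,z]$ already passes through the origin (since $|y|=|z|$) and
\[
p\cdot(y-z)=|y|\bigl(\cos\theta(y)-\cos\theta(z)\bigr)>0,
\]
so $p$ lies in the interior of the half-space $H$ bounded by this bisector that contains $y$, and the hypothesis gives $u(y)\ge u(\sigma_H(y))=u(z)$ directly, with no appeal to axial symmetry at this step.
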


For every $H\in \Hcal$ we denote by $\widehat H\in \Hcal_0$ the closure
of the complementary half-space $\R^N\setminus H$. In the spirit of \cite{BartschWethWillem,TavaresWeth,WangWillem}, the proof of Theorem \ref{thm: partial symmetry} is based upon a general criterion (cf. for instance Theorem 2.6 in \cite{BartschWethWillem} or Theorem 4.3 in \cite{TavaresWeth}). 

\begin{proposition}\label{prop: abstract criterion}
Let $\mf{u}$ be a nonnegative solution of \eqref{system}. If for every $H \in \mathcal{H}_0$ the function 
\[
\mf{u}^H:=(\mf{u}_{1,H}, \dots, \mf{u}_{\bar h,H},\mf{u}_{\bar h+1, \widehat{H}},  \dots, \mf{u}_{m, \widehat{H}}) = (u_{1,H}, \dots, u_{l,H},u_{l+1,\widehat{H}}, \dots, u_{d, \widehat{H}})
\]
is still a solution of \eqref{system}, then $(u_1,\dots,u_l)$ and $(u_{l +1},\dots,u_d)$ are foliated Schwartz symmetric with respect to antipodal points. 
\end{proposition}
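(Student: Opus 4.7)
The plan is to verify, at each $H \in \mathcal{H}_0$, the rigid alternative $\mf{u}^H = \mf{u}$ or $\mf{u}^{\widehat H} = \mf{u}$, and to deduce the foliated Schwarz symmetry via a Brock-type polarization criterion. Once the alternative is in hand, I will pick a non-trivial component $u_{\bar i}$ (for some $\bar i \leq l$); the Brock characterization forces $u_{\bar i}$ to be foliated Schwarz symmetric with respect to some $p \in \S^{N-1}$, and a coupling argument will propagate this to all other components: $(u_1,\dots,u_l)$ will be foliated Schwarz symmetric with respect to $p$ and $(u_{l+1},\dots,u_d)$ with respect to $-p$.

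To establish the alternative, I fix $H \in \mathcal{H}_0$ and set $\tilde u_i := u_{i,H}$ for $i \leq l$, $\tilde u_i := u_{i,\widehat H}$ for $i > l$, so that $\mf{\tilde u} = \mf{u}^H$ solves \eqref{system} by hypothesis. Letting $w_i := \tilde u_i - u_i$, the definition of polarization gives $w_i \geq 0$ on $H$ for $i \leq l$, $w_i \leq 0$ on $H$ for $i > l$, and $w_i = 0$ on $\partial(H\cap\Omega)$. Subtracting the equations for $\tilde u_i$ and $u_i$ and using $\tilde u_j^2 \tilde u_i - u_j^2 u_i = \tilde u_j^2 w_i + (\tilde u_j + u_j) u_i w_j$ yields
\begin{equation*}
L_i w_i = \sum_{j=1}^d \beta_{ij} (\tilde u_j + u_j) u_i w_j, \qquad L_i := -\Delta + \Big(\lambda_i - \sum_{j=1}^d \beta_{ij} \tilde u_j^2\Big).
\end{equation*}
Under the sign assumption \eqref{eq:sign assumption}, on $H \cap \Omega$ each product $\beta_{ij} w_j$ has the same sign as $w_i$ (cooperation within a macro-group pairs equal signs with a positive coefficient, competition across macro-groups pairs opposite signs with a negative coefficient), so $L_i \lvert w_i\rvert \geq 0$ weakly on $H \cap \Omega$.

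The operator $L_i$ admits the positive solution $\tilde u_i$ of $L_i \tilde u_i = 0$ on $\Omega$ with zero Dirichlet boundary data, so $\lambda_1(L_i, \Omega) = 0$ and hence, by strict monotonicity of the principal eigenvalue under subdomain, $\lambda_1(L_i, H \cap \Omega) > 0$; Hopf's strong maximum principle thus applies to $|w_i|$, and for each $i$ either $|w_i| \equiv 0$ or $|w_i| > 0$ strictly in the interior of $H \cap \Omega$. The sign structure also rigidly couples the components: if $w_j \equiv 0$ for some $j$ with $u_j \not\equiv 0$, then $L_j w_j = 0$, and the summands $\beta_{jk}(\tilde u_k + u_k) u_j w_k$ (all of the same sign) must vanish individually; strict nonvanishing of $\beta_{jk}$ between any two non-trivial components then forces $w_k \equiv 0$ for every $k$. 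The complementary scenario --- $|w_i| > 0$ strictly for all non-trivial components --- unpacks into $\mf{u}^{\widehat H} = \mf{u}$, since $w_i > 0$ on the interior of $H$ gives $u_i(\sigma_H x) > u_i(x)$ there (hence $u_{i, \widehat H} = u_i$ globally for $i \leq l$), and analogously $u_{i,H} = u_i$ for $i > l$. This yields the alternative $\mf{u}^H = \mf{u}$ or $\mf{u}^{\widehat H} = \mf{u}$ at every $H$.

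To conclude, pick $\bar i \leq l$ with $u_{\bar i} \not\equiv 0$ (otherwise $(u_1,\ldots,u_l) \equiv 0$ is foliated Schwarz symmetric with respect to every direction, and one argues symmetrically on the second macro-group). For every $H$ the alternative specializes to $u_{\bar i, H} = u_{\bar i}$ or $u_{\bar i, \widehat H} = u_{\bar i}$, so by the Brock-type polarization characterization (see \cite{Brock, Weth}) $u_{\bar i}$ is foliated Schwarz symmetric with respect to some $p \in \S^{N-1}$. For each $H \in \mathcal{H}_0(p)$ one then has $w_{\bar i} \equiv 0$ on $H$, and the component-coupling established above upgrades this to $w_k \equiv 0$ for every $k$, i.e., $\mf{u}^H = \mf{u}$. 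Unpacking, this reads $u_i(x) \geq u_i(\sigma_H x)$ on $H$ for $i \leq l$ and $u_i(x) \geq u_i(\sigma_H x)$ on $\widehat H$ for $i > l$, which by Lemma \ref{lemma: equivalent charact for Sch symmetry} is precisely the foliated Schwarz symmetry of $(u_1,\dots,u_l)$ with respect to $p$ and of $(u_{l+1},\dots,u_d)$ with respect to $-p$. The main delicate point is the combined use of the weak formulation of $L_i \lvert w_i \rvert \geq 0$ together with the strict monotonicity of $\lambda_1$ under subdomain in order to invoke Hopf on $H \cap \Omega$, plus the strictness of the signs in \eqref{eq:sign assumption} that is needed to propagate the component-wise dichotomy into a uniform one.
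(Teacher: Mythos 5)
Your proof is correct in substance but follows a genuinely different route than the paper, and the difference is worth noting. The paper selects the symmetry axis $p$ at the outset: taking $u_1\not\equiv 0$ and $p$ a point where $u_1$ is maximal on some sphere $\partial B_r(0)\subset\Omega$, the difference $w:=u_{1,H}-u_1$ is, for each $H\in\Hcal_0(p)$, a nonnegative supersolution of $-\Delta+q$ on $\Omega\cap H$ vanishing at the interior point $rp$, hence $w\equiv 0$ by the strong maximum principle; substituting back into the equation for $w$ forces each (signed) interaction term to vanish and propagates $u_{j,H}=u_j$ and $u_{j,\widehat H}=u_j$ to all other components in one stroke, after which Lemma~\ref{lemma: equivalent charact for Sch symmetry} concludes. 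You instead first establish, for \emph{every} $H\in\Hcal_0$, the dichotomy $\mf{u}^H=\mf{u}$ or $\mf{u}^{\widehat H}=\mf{u}$ (running the maximum-principle argument on each component and then propagating through the cross terms), and then outsource the choice of $p$ to the ``alternative $\Rightarrow$ foliated Schwarz symmetry'' characterization. That is valid, but be aware that this is a strictly stronger black box than Lemma~\ref{lemma: equivalent charact for Sch symmetry}, which is only the one-directional statement; the two-sided version is proved in the literature essentially by the same ``maximize $u$ on a sphere to pick $p$'' device that the paper performs inline, so make sure you can point to an explicit reference (Smets--Willem and the Bartsch--Weth--Willem criterion are the right places, more so than \cite{Brock} alone). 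Two smaller remarks: the detour through $\lambda_1(L_i,\Omega)=0$ and strict domain monotonicity is unnecessary machinery --- since $w_i$ has a fixed sign on $\Omega\cap H$, the classical strong maximum principle for $-\Delta+V$ with $V\in L^\infty_{\loc}$ applies directly to the inequality $-\Delta w_i+Vw_i\geq 0$ (resp.\ $\leq 0$) without any spectral input; and you should note explicitly that $\sigma_H(\Omega)=\Omega$ for $H\in\Hcal_0$ (because $\Omega$ is radial), which is what guarantees that the polarization $\tilde u_i$ is strictly positive on $\Omega$ when $u_i\not\equiv 0$ and hence that $\lambda_1(L_i,\Omega)=0$ is meaningful.
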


\begin{proof}
Since $\mf{u} \in \mathcal{N}$, there exists an index $i \in I_1$ such that $u_i \not \equiv 0$ in $\Omega$. Without loss of generality, we assume $i=1$. Let $r>0$ be such that $\pa B_r(0) \subset \Omega$, and let $p \in \mathbb{S}^{N-1}$ be such that $\max_{\pa B_r(0)} u_1 = u_1(rp)$. By assumption, for any $H \in \mathcal{H}_0(p)$, 
\[
-\Delta u_{1,H}+\lambda_1 u_{1,H} = \sum_{j=1}^l \beta_{1j} u_{1,H} u_{j,H}^2 + \sum_{j=l+1}^d \beta_{1j} u_{1,H} u_{j,\widehat{H}}^2 \qquad \text{in $\Omega$}.
\]
Thus, if we let $w:= u_{1,H}-u_1$, we obtain
\begin{align*}
-\Delta w + \lambda_1 w & = \sum_{j=1}^l \beta_{1j} \left( u_{1,H} u_{j,H}^2 - u_1 u_j^2 \right)  + \sum_{j=l+1}^d \beta_{1j} \left( u_{1,H} u_{j,\widehat{H}}^2-u_1 u_j^2\right) \\
& = \sum_{j=1}^l \beta_{1j} u_{1,H}\left(u_{j,H}^2-u_j^2\right) + \sum_{j=l+1}^d \beta_{1j} u_{1,H} \left(u_{j,\widehat{H}}^2-u_j^2\right) +\sum_{j=1}^d \beta_{1j} u_j^2 (u_{1,H}-u_1),
\end{align*}
in $\Omega \cap H$, that is
\begin{equation}\label{eq22TaWe}
-\Delta w + \underbrace{\left( \lambda_1 -\sum_{j=1}^d \beta_{1j} u_j^2\right)}_{=:q(x) \in L^\infty_{\loc}(\Omega \cap H)} w = \sum_{j=1}^l \beta_{1j} u_{1,H}\left(u_{j,H}^2-u_j^2\right) + \sum_{j=l+1}^d \beta_{1j} u_{1,H} \left(u_{j,\widehat{H}}^2-u_j^2\right) .
\end{equation}
By definition $u_{j,H} \ge u_j$ in $\Omega \cap H$ for every $j=1,\dots,l$, while $u_{j,\widehat{H}} \le u_j$ in $\Omega \cap H$ for every $j=l+1,\dots,d$. Therefore, recalling that $\beta_{1j}>0$ for $j=1,\dots,l$, while $\beta_{1j}<0$ for $j=l+1,\dots,d$, we deduce that $-\Delta w+q(x) w \ge 0$ and $w \ge 0$ in $\Omega \cap H$. The strong maximum principle leads to the alternative $w>0$ in $\Omega \cap H$, or $w \equiv 0$ in $\Omega \cap H$. As $w(rp)=0$, the latter condition holds true, and coming back to equation \eqref{eq22TaWe} we deduce that
\[
0 = -\Delta w + q(x) w = \sum_{j=1}^l \beta_{1j} u_{1,H}\left(u_{j,H}^2-u_j^2\right) + \sum_{j=l+1}^d \beta_{1j} u_{1,H} \left(u_{j,\widehat{H}}^2-u_j^2\right) \ge 0,
\]
which in turn implies (since $u_1 \not \equiv 0 \Rightarrow u_1 > 0$ in $\Omega$) that $u_{j,H} \equiv u_j$ in $\Omega \cap H$ for every $j=1,\dots,l$ and $u_{j,\widehat{H}} \equiv u_j$ in $\Omega \cap H$ for every $j=l+1,\dots,k$. Since $H \in \mathcal{H}_0(p)$ has been arbitrarily chosen, the thesis follows from Lemma \ref{lemma: equivalent charact for Sch symmetry}.
\end{proof}

In the following we will prove that, under the assumptions of Theorem \ref{thm: partial symmetry}, it is possible to apply Proposition \ref{prop: abstract criterion}. First we need some preliminary results.

\begin{lemma}\label{lem: estimate interaction}
For every $u,v \in H^1_0(\Omega)$ and $H \in \mathcal{H}_0(p)$, then also $u_H,v_H\in H^1_0(\Omega)$, and moreover:
\begin{itemize}
\item[(i)] $\displaystyle \int_\Omega |u|^p\, dx=\int_\Omega |u_H|^p$, for every $p\geq 1$.
\item[(ii)]  $\displaystyle \int_\Omega |\nabla u_H|^2=\int_\Omega |\nabla u|^2$.
  \item[(iii)] $\displaystyle \int_{\Omega} u^2 v^2 \le \int_{\Omega} u_{H}^2 v_{H}^2 $ and $\displaystyle \int_{\Omega} u^2 v^2 \ge \int_{\Omega} u_{H}^2 v_{\widehat{H}}^2$.
\end{itemize}
\end{lemma}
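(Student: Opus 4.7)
The plan is to verify the three claims by an elementary pointwise analysis on pairs $\{x,\sigma_H(x)\}$ and a change of variables. The key observation underlying all polarization identities is that $\sigma_H:\Omega\to\Omega$ is an isometry (since $\Omega$ is radial about $0\in\partial H$), so for any measurable $F:\R^2\to\R$ one has
\[
\int_\Omega F(u(x),v(x))\,dx = \int_{\Omega\cap H}\Bigl[F(u(x),v(x))+F(u(\sigma_H x),v(\sigma_H x))\Bigr]\,dx.
\]
The fact that $u_H\in H^1_0(\Omega)$ follows from $\sigma_H(\Omega)=\Omega$, $\sigma_H(\partial\Omega)=\partial\Omega$, together with the standard Sobolev regularity of the pointwise $\max/\min$ of two $H^1_0$ functions.

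For (i), at each $x\in\Omega\cap H$ the unordered multiset $\{|u_H(x)|,\,|u_H(\sigma_H x)|\}$ coincides with $\{|u(x)|,\,|u(\sigma_H x)|\}$, so summing the $p$-th powers and integrating over $\Omega\cap H$ via the identity above yields the equality. For (ii) I split $H$ into $\{x\in H : u(x)\geq u(\sigma_H x)\}$ and its complement in $H$: on the first set $u_H=u$ a.e., while on the second $u_H(x)=u(\sigma_H x)$ and $u_H(\sigma_H x)=u(x)$, i.e.\ a pointwise swap. Combined with the chain rule for Sobolev functions and the fact that $|\nabla (u\circ\sigma_H)|=|\nabla u|\circ\sigma_H$, this gives $\int_\Omega|\nabla u_H|^2=\int_\Omega|\nabla u|^2$.

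The content of the lemma is (iii), which I will reduce to a pointwise four-variable rearrangement inequality. Setting $a=u(x)^2$, $b=u(\sigma_H x)^2$, $c=v(x)^2$, $d=v(\sigma_H x)^2$ for $x\in\Omega\cap H$, I claim that for all nonnegative reals $a,b,c,d$,
\[
ac+bd \;\leq\; \max(a,b)\max(c,d)+\min(a,b)\min(c,d),
\]
\[
ac+bd \;\geq\; \max(a,b)\min(c,d)+\min(a,b)\max(c,d).
\]
Both are immediate from the elementary fact that $(a-b)(c-d)\geq 0$ when $a,b$ and $c,d$ are similarly ordered and $\leq 0$ otherwise, combined with $\max+\min=a+b$. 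Integrating these inequalities over $\Omega\cap H$, and noting that the right-hand sides are exactly $u_H^2v_H^2+u_H^2v_H^2\circ\sigma_H$ (resp.\ $u_H^2v_{\widehat H}^2+u_H^2v_{\widehat H}^2\circ\sigma_H$) expressed at $x$ and $\sigma_H(x)$, yields (iii) via the identity at the start.

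There is no real obstacle: the proof is entirely routine polarization bookkeeping, and the only step requiring mild care is the Sobolev regularity claim $u_H\in H^1_0(\Omega)$, which follows from the radial symmetry of $\Omega$ with respect to a point on $\partial H$.
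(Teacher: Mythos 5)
Your proof is correct in substance and is genuinely more self-contained than the paper's own, which simply cites Lemma 3.1 of Weth's survey \cite{Weth} for (i) and (ii), Proposition 31.7 of Willem's book \cite{WillemBook} for the first inequality in (iii), and Lemma 4.5 of \cite{TavaresWeth} for the second. You instead reduce everything to the pairwise decomposition over $\{x,\sigma_H(x)\}$ together with the elementary four-variable rearrangement inequality for $\max/\min$, which is exactly the mechanism hidden inside those citations. This makes the lemma transparent at essentially no extra cost; the only price is that you must invoke the standard facts on $\nabla\max(f,g)$, $\nabla\min(f,g)$ a.e.\ for $H^1$ functions, which the references bundle in.

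One caveat, which applies equally to the lemma as the paper has stated it: part (iii) requires $u,v\ge 0$, and your argument uses this implicitly. You set $a=u(x)^2$, $b=u(\sigma_H x)^2$ and then identify $\max(a,b)$ with $u_H(x)^2$; that identification is valid only because squaring is monotone on the range of $u$, i.e.\ $u\ge 0$, and similarly for $v$. For sign-changing functions (iii) is simply false: take $u(x)=1$, $u(\sigma_H x)=-2$, $v(x)=1$, $v(\sigma_H x)=2$, so $u_H(x)=1$, $u_H(\sigma_H x)=-2$, $v_H(x)=2$, $v_H(\sigma_H x)=1$; the pair contributes $1+16=17$ to $\int u^2v^2$ but only $4+4=8$ to $\int u_H^2 v_H^2$. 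Parts (i) and (ii) do hold without sign restrictions, as your argument shows. The omission is harmless downstream, since the lemma is applied only to the nonnegative minimizer $\mathbf u$, but the hypothesis $u,v\ge 0$ should be stated explicitly in (iii), both in your write-up and in the lemma itself.
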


\begin{proof}
For the first two items, check for instance \cite[Lemma 3.1]{Weth}). As for (iii), the first inequality follows by Proposition 31.7 in \cite{WillemBook}, while the second one is a particular case of Lemma 4.5 in \cite{TavaresWeth}.
\end{proof}

Assume that $B$ is as in the assumption of Theorem \ref{thm: partial symmetry}, let $c= \inf_{\mathcal{N}} J$, and let $\mf{u}$ be a nonnegative minimizer of $J$ on $\mathcal{N}$. In view of Proposition \ref{prop: abstract criterion}, we aim at proving that $\mf{u}^H:=(u_{1,H},\dots,u_{l,H},u_{l+1,\widehat{H}},\dots,u_{d,\widehat{H}})$ also achieves $c$ for every $H \in \mathcal{H}_0$. In this perspective, the main difficulty consists in showing that $\mf{u}^H\in \mathcal{N}$ (in the literature, this is usually the part of the proof which requires an $\inf \sup$ characterization, see Remark \ref{rem: properties minimizers}). To this aim, we study the function of real variables:
\[
\Psi: (t_1,\dots,t_m) \in \overline{\R_+^m} \mapsto J\left( \sqrt{t_1} \mf{u}_{1,H},\dots, \sqrt{t_{\bar h}} \mf{u}_{\bar h, H}, \sqrt{t_{\bar h+1}} \mf{u}_{\bar h+1, \widehat{H}},\dots, \sqrt{t_m}\mf{u}_{m,\widehat{H}} \right) \in \R,
\]
and claim that, under the considered assumption, it has a unique maximum point $\tilde{\mf{t}}=(\tilde t_1,\ldots, \tilde t_m)$ such that $\tilde t_h >0$ for every $h=1,\ldots, m$. 

By the properties of polarization stated in Lemma \ref{lem: estimate interaction}, and the assumptions made on $B$, we have that
\[
\mf{u}^H\in \widetilde \Ncal,\qquad \sum_{h=1}^m \|\mf{u}_h^H\|^2_h=\sum_{j=1}^d \|u_i\|_i^2=4c\leq 4\bar C
\]
where $\bar C$ is as in \eqref{eq:uniform_upper_estimate}. Hence, thanks to Lemma \ref{lemma:positive_definite}, the matrix $M(\mf{u}^H)$ is positive definite, and observing that 
\[
\Psi(t_1,\dots,t_m) = \frac{1}{2} \sum_{h=1}^m \|\mf{u}^H\|_h^2 t_h - \frac{1}{4} M_B(\mf{u}^H) \mf{t} \cdot \mf{t},
\]
this implies that there exists $\kappa>0$ such that 
\[
\Psi(t_1,\ldots, t_m)\leq \frac{1}{2}\sum_{h=1}^m t_h\|\tilde{\mf{u}}_h^H\|_h^2-\kappa \sum_{h=1}^m t_h^2\to -\infty \qquad \text{ as } |\mf{t}|\to \infty.
\]
Therefore, $\Psi$ admits a global maximum $\tilde{\mf{t}}$ in $\overline{\R_+^m}$. Note also that it is a strictly concave function. Since $\Psi$ is of class $\mathcal{C}^1$ up to the boundary of $\R^m_+$, the maximality of $\tilde{t}$ entails $\pa_h \Psi(\tilde{\mf{t}}) \le 0$ if $\tilde t_h=0$, and $\pa_h \Psi(\tilde{\mf{t}}) = 0$ if $\tilde t_h>0$. In particular, we observe that 
\begin{equation}\label{eq:system_for_the_t}
\|\mf{u}_h^H\|_h^2 = \sum_{h=1}^m M_B(\mf{u}^H)_{hk} \tilde t_k^H \qquad \text{whenever $\tilde t_h>0$}.
\end{equation}

\begin{remark}\label{rem: unique max}
Let us consider the function
\[
\Phi:(t_1,\dots,t_m) \in \R_+^m \mapsto J\left(\sqrt{t_1} \mf{u}_1,\dots, \sqrt{t_m} \mf{u}_m\right).
\]
Under our assumption, by Lemma \ref{lemma:positive_definite} any minimizer for $c$ stays in $\mathcal{E}$. Therefore, as in the previous discussion, we can check that $\Phi$ is strictly concave and has a maximum point in $\overline{\R_+^m}$. Moreover, since $\mf{u} \in \mathcal{N}$, the point $\mf{1}=(1,\dots,1)$ is a critical point for $\Phi$. Hence, by strict concavity, $\mf{1}$ is the unique critical point of $\Phi$, and is a global maximum.
\end{remark}

\begin{lemma}
We have
\[
\sum_{h=1}^m \tilde t_h \|\mf{u}_h^H\|_h^2\leq 4 \bar C.
\]
\end{lemma}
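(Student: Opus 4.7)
The plan is to combine two ingredients: an identity for $\Psi(\tilde{\mathbf{t}})$ coming from the first-order conditions at the maximum, and a pointwise comparison between $\Psi$ and the function $\Phi$ of Remark~\ref{rem: unique max}.

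First, starting from the explicit expression
\[
\Psi(\mathbf{t})=\frac{1}{2}\sum_{h=1}^m t_h\|\mathbf{u}_h^H\|_h^2-\frac{1}{4}M_B(\mathbf{u}^H)\mathbf{t}\cdot\mathbf{t},
\]
the Kuhn--Tucker conditions $\tilde t_h\,\partial_h\Psi(\tilde{\mathbf{t}})=0$ recorded just above the statement (they hold trivially when $\tilde t_h=0$ and by \eqref{eq:system_for_the_t} when $\tilde t_h>0$) imply, after multiplication by $\tilde t_h$, summation over $h$, and using the symmetry of $M_B(\mathbf{u}^H)$, the identity
\[
\sum_{h,k=1}^m M_B(\mathbf{u}^H)_{hk}\tilde t_h\tilde t_k=\sum_{h=1}^m \tilde t_h\|\mathbf{u}_h^H\|_h^2.
\]
Substituting back into $\Psi$ yields $\Psi(\tilde{\mathbf{t}})=\tfrac{1}{4}\sum_{h=1}^m \tilde t_h\|\mathbf{u}_h^H\|_h^2$, so the desired bound reduces to showing $\Psi(\tilde{\mathbf{t}})\le \bar C$.

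For this I would establish the pointwise inequality $\Psi(\mathbf{t})\le \Phi(\mathbf{t})$ on $\overline{\R_+^m}$. By Lemma~\ref{lem: estimate interaction}(i)--(ii), polarization preserves each $\|u_i\|_i$, so the quadratic parts of $\Psi$ and $\Phi$ coincide. For the quartic part, I would treat the coupling terms case by case according to the two macro-groups $\{1,\dots,l\}$ and $\{l+1,\dots,d\}$: when $(i,j)$ lies in a single macro-group the two factors are polarized by the same half-space, so the first inequality in Lemma~\ref{lem: estimate interaction}(iii) gives $\int u_{i,\ast}^2 u_{j,\ast}^2\ge \int u_i^2 u_j^2$, which combined with $\beta_{ij}>0$ from \eqref{eq:sign assumption} yields $\beta_{ij}\int u_{i,\ast}^2 u_{j,\ast}^2\ge \beta_{ij}\int u_i^2 u_j^2$; when $i$ and $j$ lie in different macro-groups the two polarizations are antipodal, so the second inequality in Lemma~\ref{lem: estimate interaction}(iii) reverses the integral inequality, and the sign $\beta_{ij}<0$ from \eqref{eq:sign assumption} flips it back. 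Since $t_h t_k\ge 0$ throughout $\overline{\R_+^m}$, summing these inequalities gives $\Psi(\mathbf{t})\le \Phi(\mathbf{t})$.

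To conclude, Remark~\ref{rem: unique max} identifies the unique global maximum of $\Phi$ on $\overline{\R_+^m}$ as $\mathbf{t}=\mathbf{1}$, with value $\Phi(\mathbf{1})=J(\mathbf{u})=c$. Combining with Lemma~\ref{eq:c_B_universalbound} one obtains
\[
\frac{1}{4}\sum_{h=1}^m \tilde t_h\|\mathbf{u}_h^H\|_h^2=\Psi(\tilde{\mathbf{t}})\le \Phi(\tilde{\mathbf{t}})\le \Phi(\mathbf{1})=c\le \bar C,
\]
which yields the claim upon multiplying by $4$. The only delicate point is the pointwise comparison $\Psi\le \Phi$: it is precisely here that the sign pattern \eqref{eq:sign assumption} is essential, since the favourable direction of each polarization inequality must be matched with the sign of the corresponding $\beta_{ij}$.
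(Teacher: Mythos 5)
Your proposal is correct and follows essentially the same argument as the paper: the first-order (Kuhn--Tucker) conditions give $\Psi(\tilde{\mathbf t})=\tfrac14\sum_h\tilde t_h\|\mathbf u_h^H\|_h^2$, while the pointwise comparison $\Psi\le\Phi$ (using Lemma~\ref{lem: estimate interaction} together with the sign pattern of $B$) plus the identification $\Phi(\mathbf 1)=c\le\bar C$ from Remark~\ref{rem: unique max} and Lemma~\ref{eq:c_B_universalbound} closes the estimate. The only difference is cosmetic: you spell out the sign/polarization case analysis that the paper compresses into the phrase ``combining Lemma~\ref{lem: estimate interaction} with the assumptions on $B$''.
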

\begin{proof}
We claim that 
\[
\|\mf{u}^H\|_h^2 \tilde t_h = \sum_{k=1}^m M_B(\mf{u}^H)_{hk} \tilde t_h \tilde t_k \qquad \forall h=1,\dots,m.
\]
Indeed, if $\tilde t_h=0$ this relation is trivially satisfied. If $\tilde t_h>0$, then it follows by \eqref{eq:system_for_the_t}. Therefore
\[
J\left( \sqrt{\tilde t_1} \mf{u}_1^H,\dots,\sqrt{\tilde t_m} \mf{u}_m^H \right) = \frac{1}{4} \sum_{h=1}^m \|\mf{u}_h^H\|_h^2 \tilde t_h.
\]
On the other hand, combining Lemma \ref{lem: estimate interaction} with the assumptions on $B$, we have also
\begin{align*}
J\left(\sqrt{\tilde t_1} \mf{u}^H_1,\ldots, \sqrt{\tilde t_m} \mf{u}^H_m\right) &\leq J\left(\sqrt{\tilde t_1} \mf{u}_1,
\ldots, \sqrt{\tilde t_m} \mf{u}_m\right)\leq \sup_{t_1,\ldots, t_m\geq 0} J\left(\sqrt{t_1} \mf{u}_1,\ldots, \sqrt{t_m} \mf{u}_m\right)\\
&=J(\mf{u}_1,\ldots, \mf{u}_m)=c\leq \bar C.
\end{align*}
Notice that we have used the fact that $\Phi$ has the unique maximizer $(1,\dots1)$, see the previous remark.
\end{proof}

\begin{lemma}
There holds
\[
\tilde t_1,\dots, \tilde t_h> 0.
\]
\end{lemma}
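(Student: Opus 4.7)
I would argue by contradiction: suppose that $A:=\{h\in\{1,\dots,m\}:\tilde t_h=0\}$ is non-empty and pick $h_0\in A$. The objective is to produce a competitor inside $\mathcal{N}$ whose energy is strictly less than $c$, thereby contradicting the minimality of $\mf{u}$. The three ingredients I plan to exploit are (a) the KKT optimality of the boundary maximizer $\tilde{\mf{t}}$ of the strictly concave $\Psi$, (b) the Nehari identities for $\mf{u}\in\mathcal{N}$, and (c) the polarization monotonicities from Lemma \ref{lem: estimate interaction}.

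Concretely, the KKT conditions at $\tilde{\mf{t}}$ read $\partial_h\Psi(\tilde{\mf{t}})=0$ for $h\in A^c$ and $\partial_h\Psi(\tilde{\mf{t}})\le 0$ for $h\in A$, which translate into
\[
\|\mf{u}_h^H\|_h^2 \;=\; \sum_{k\in A^c} M_B(\mf{u}^H)_{hk}\,\tilde t_k \quad (h\in A^c), \qquad \|\mf{u}_{h_0}^H\|_{h_0}^2 \;\le\; \sum_{k\in A^c} M_B(\mf{u}^H)_{h_0 k}\,\tilde t_k.
\]
These are to be combined with $\|\mf{u}_h^H\|_h=\|\mf{u}_h\|_h$ (Lemma \ref{lem: estimate interaction}(i)--(ii)), the Nehari relation $\|\mf{u}_h\|_h^2=\sum_k M_B(\mf{u})_{hk}$ coming from $\mf{u}\in\mathcal{N}$, and the componentwise inequality $M_B(\mf{u}^H)_{hk}\ge M_B(\mf{u})_{hk}$ that follows from Lemma \ref{lem: estimate interaction}(iii) under the sign structure \eqref{eq:sign assumption}: polarization increases the $\beta_{ij}$-weighted product both when $\beta_{ij}>0$ with the two factors polarized to the same half-space, and when $\beta_{ij}<0$ with the factors polarized to opposite half-spaces.

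To conclude, I would extend the defective vector $(\sqrt{\tilde t_h}\,\mf{u}_h^H)_{h\in A^c}$ by small multiples $\sqrt{\varepsilon}\,\mf{u}_h$ on the vanishing groups $h\in A$, and project the resulting element onto $\mathcal{N}$ by solving the scaling system analogous to \eqref{eq:system_for_the_t}. Strict diagonal dominance (Lemma \ref{lemma:positive_definite}) guarantees that, for $\varepsilon>0$ small enough, the corresponding matrix is invertible with all positive scalings, so the projection is well-defined. A first-order expansion in $\varepsilon$, using the polarization inequality $J(\sqrt{t_h}\,\mf{u}_h^H)\le J(\sqrt{t_h}\,\mf{u}_h)$ already exploited in the previous lemma, together with the uniqueness of $\mf{1}$ as maximizer of $\Phi$ (Remark \ref{rem: unique max}), should yield $J<c$ at the projected vector, contradicting $c=\inf_{\mathcal{N}} J$. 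The main obstacle is justifying the projection step: the off-diagonals of $M_B(\mf{u}^H)$ have mixed signs—positive within each macro-group, negative across them—so the standard Nehari-projection existence argument is not automatic. The strict diagonal dominance supplied by Lemma \ref{lemma:positive_definite} is precisely what allows the mixed-sign off-diagonals to be dominated and the projection to be carried out for small $\varepsilon$.
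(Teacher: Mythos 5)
Your proposal takes a genuinely different and considerably more complicated route than the paper, and it leaves a real gap at the projection step. The paper does not construct a competitor in $\mathcal{N}$ at all; it argues entirely inside the finite-dimensional optimization for $\Psi$. Assuming $\tilde t_1 = 0$, it writes out the increment
\[
\Psi(t_1,\tilde t_2,\dots,\tilde t_m)-\Psi(0,\tilde t_2,\dots,\tilde t_m)
= \tfrac{1}{2} t_1\|\mf{u}_1^H\|_1^2 - \tfrac{1}{2} t_1\sum_{h\ge 2} M_B(\mf{u}^H)_{1h}\tilde t_h - \tfrac{1}{4}M_B(\mf{u}^H)_{11}t_1^2,
\]
and then bounds $\sum_{h\ge 2} M_B(\mf{u}^H)_{1h}\tilde t_h \le \tfrac{1}{4}\|\mf{u}_1^H\|_1^2$ by combining $\beta_{ij}<K$ on $\Kcal_2$, the Sobolev bound, the estimate $\sum_h \tilde t_h\|\mf{u}_h^H\|_h^2\le 4\bar C$ from the preceding lemma, and $K=S^2/(16\bar C)$. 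This makes the increment strictly positive for small $t_1>0$, contradicting directly the maximality of $\tilde{\mf{t}}$ for $\Psi$. No projection onto $\mathcal{N}$ is needed.

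Your proposal instead tries to contradict $c=\inf_{\mathcal{N}}J$, which forces you to produce an element of $\mathcal{N}$, and that is where the argument stalls. You extend the defective vector by $\sqrt{\varepsilon}\,\mf{u}_h$ on the vanishing groups and then invoke ``strict diagonal dominance'' to justify a projection back onto $\mathcal{N}$, but this is not supplied by Lemma \ref{lemma:positive_definite}: that lemma applies to elements of $\widetilde\Ncal\cap\{\sum_i\|u_i\|_i^2\le 8\bar C\}$, and your perturbed vector has no reason a priori to satisfy the $\widetilde\Ncal$ inequalities on the vanishing groups. Moreover, as $\varepsilon\to 0$ the row and column of $M_B$ indexed by a vanishing group scale like $\varepsilon$ and $\varepsilon^2$, so the scaling system degenerates and the claim that the projection exists with all positive scalings is not automatic. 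You acknowledge this yourself as ``the main obstacle,'' which is precisely the step that would need a careful quantitative argument. The paper's route sidesteps all of this by observing that the one-sided derivative $\partial_1\Psi(\tilde{\mf{t}})>0$ already kills the boundary maximizer; it is worth internalizing that the Nehari constraint never needs to be touched here.
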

\begin{proof}
Assume, in view of a contradiction, that $\tilde t_1=0$. We will check that
\[
J\left(\sqrt{ t_1} \mf{u}^H_1,\ldots, \sqrt{\tilde t_m} \mf{u}^H_m\right) >J\left(0,\sqrt{ \tilde t_2} \mf{u}^H_2,\ldots, \sqrt{\tilde t_m} \mf{u}^H_m\right)
\]
for $t_1>0$ sufficiently close to $0$, which contradicts the maximality of $(0,\tilde t_2,\ldots, \tilde t_m)$. The left hand side of the inequality can be rewritten as
\[
\frac{1}{2}t_1 \|\mf{u}^H_1\|_1^2-\frac{1}{2}\sum_{h=2}^m M_B(\mf{u}^H)_{1h} t_1\tilde t_h -\frac{1}{4}M_B(\mf{u}^H)_{11}t_1^2 + \frac{1}{2}\sum_{h=2}^m \tilde t_h\|\mf{u}^H_h\|_h^2 -\frac{1}{4} \sum_{h,k= 2}^m M_B(\mf{u}^H)_{hk}\tilde t_h \tilde t_k.
\]
Observe that, for $\beta_{ij}< K$ in $\Kcal_2$ (and $K$ as in Remark \ref{rem: constant K}), it results 
\begin{align}
\sum_{h=2}^m M_B(\mf{u}^H)_{1h} \tilde t_h &=\sum_{h= 2}^m\sum_{(i,j)\in I_1\times I_h} \int_\Omega \beta_{ij} ( u_i^H  u^H_j)^2 \tilde t_h \leq \frac{K}{S^2} \sum_{h=2}^m \sum_{(i,j)\in I_1\times I_h} \|u_i^H\|_i^2 \| u_j^H\|_j^2 \tilde t_h\\
										   &\leq \frac{K}{S^2} \|\mf{u}_1^H\|_1^2 \sum_{h=2}^m \tilde t_h \|\mf{u}^H_h\|_h^2\leq \frac{4K \bar C}{S^2}\|\mf{u}_1^H\|_1^2\leq \frac{1}{4}\|\mf{u}_1^H\|_1^2,
\end{align}
where we used the estimate of the previous lemma. Thus
\begin{multline*}
\frac{1}{2}t_1 \|\mf{u}^H_1\|_1^2-\frac{1}{2}\sum_{h=2}^m M_B(\mf{u}^H)_{1h} t_1\tilde t_h -\frac{1}{4}M_B(\mf{u}^H)_{11}t_1^2  \\
=\frac{1}{2}t_1\left( \|\mf{u}^H_1\|_1^2-\sum_{h=2}^m M_B(\mf{u}^H)_{1h} \tilde t_h -\frac{1}{2}M_B(\mf{u}^H)_{11} t_1\right) \ge  \frac{1}{2} t_1 \left(\frac{3}{4}\|\mf{u}^H_1\|_1^2 - t_1 \mf{M}(\mf{u}^H)_{11}\right)>0
\end{multline*}
for sufficiently small small $t_1>0$, which yields the desired contradiction.
\end{proof}

\begin{proof}[End of the proof of Theorem \ref{thm: partial symmetry}] Given $\mf{u}$ achieving $c$ and $H\in \Hcal_0$, we have concluded that there exists a maximizer $\tilde t$ for the function $\Psi$ in $\overline{\R_+^m}$, and $\tilde t_1,\ldots, \tilde t_m>0$. By \eqref{eq:system_for_the_t}, we infer that $(\sqrt{\tilde t_1} \mf{u}^H_1,\ldots, \sqrt{\tilde t_m} \mf{u}^H_m)\in \Ncal$; together with Lemma \ref{lem: estimate interaction}, this implies that
\begin{align}
c & \leq J\left(\sqrt{\tilde t_1} \mf{u}^H_1,\ldots, \sqrt{\tilde t_m} \mf{u}^H_m\right) \leq  J\left(\sqrt{\tilde t_1} \mf{u}_1,\ldots, \sqrt{\tilde t_m} \mf{u}_m\right) \\
& \leq \sup_{t_1,\ldots, t_m\geq 0}  J\left(\sqrt{t_1} \mf{u}_1,\ldots, \sqrt{t_m} \mf{u}_m\right) =J(\mf{u})=c,
\end{align}
which is then a chain of equalities. The uniqueness of the maximum for the function $\Phi$ (see Remark \ref{rem: unique max}) entails $\tilde t_h=1$ for every $h$, and thus $\mf{u}^H$ also achieves $c$, being in particular a solution of \eqref{system} (cf. Lemma \ref{lemma:N_is_manifold}). We can now conclude using the criterion of Proposition \ref{prop: abstract criterion}.
\end{proof}

\section{Existence of least energy positive solutions}\label{sec: new ex results}

This section is devoted to the proofs of Theorems \ref{thm: weak coop and strong coop 1} and \ref{thm: weak coop and strong coop 2}. They are inspired by those of Theorem 1.6 and 1.7 in \cite{Soave}.

\subsection{Proof of Theorem \ref{thm: weak coop and strong coop 1}}

Under the considered assumptions, by Theorem \ref{thm:c_B_is_achieved} there exists a nonnegative solution $\mf{u}$ of \eqref{system} which minimizes $J$ in the Nehari set $\mathcal{N}$. We wish to show that 
\[
\inf_{\mathcal{N}} J < \inf_{\mathcal{N} \cap \{w_i=0 \ \text{for some $i$}\}} J.
\] 
If this is true, then by minimality $u_i \neq 0$ for every $i$. By contradiction, let us assume that for some index $l$ there holds $u_l = 0$. Let $\bar h \in \{1,\dots,m\}$ be such that $l \in I_{\bar h}$. By definition of $\mathcal{N}$, there exists $p \in I_{\bar h}$ such that $u_p \neq 0$. By Lemma \ref{lemma:positive_definite} we know that $\mf{u} \in \mathcal{E}$, and hence $\mathcal{N}$ defines, in a neighbourhood of $\mf{u}$, a smooth manifold (actually a $\mathcal{C}^2$-manifold, as it is immediate to verify) of codimension $m$ in $H_0^1(\Omega;\R^d)$. We claim that 
\begin{equation}\label{claim on second diff}
d^2 J(\mf{u})[\mf{v},\mf{v}] \ge 0 \quad \text{for every $\mf{v} \in T_\mf{u}(\Ncal)$},
\end{equation}
where $T_\mf{u}(\Ncal)$ denotes the tangent space to $\mathcal{N}$ at the point $\mf{u}$.  To prove this, we observe that since $\mathcal{N}$ is of class $\mathcal{C}^2$, for any $\mf{v} \in T_\mf{u}(\Ncal)$ there exists a $\mathcal{C}^2$ curve $\gamma: (-\eps,\eps) \to \mathcal{N}$ for some $\eps >0$ such that $\gamma(0) = \mf{u}$ and $\gamma'(0) = \mf{v}$. Now by minimality of $\mf{u}$, and recalling that $dJ(\mf{u})= 0$, we infer that
\[
0 \le \left. \frac{d^2}{dt^2} J( \gamma(t)) \right|_{t=0} = \left. d^2J(\gamma(t))[\gamma'(t),\gamma'(t)] \right|_{t=0} + \left. dJ(\gamma(t))[\gamma''(t)] \right|_{t=0} = d^2 J(\mf{u})[\mf{v},\mf{v}],
\]
which proves the claim \eqref{claim on second diff}. By direct computations, one can easily check that 
\begin{equation}\label{second diff}
d^2J(\mf{u})[\mf{v},\mf{v}] = \sum_{i=1}^d \|v_i\|_i^2 - \sum_{i,j=1}^d \int_{\Omega} \beta_{ij} u_i^2 v_j^2 -2 \sum_{i,j=1}^d \int_{\Omega} \beta_{ij} u_i u_j v_i v_j.
\end{equation}
We consider the variation $\mf{v}$ defined by 
\[
v_i:= \begin{cases} 0 & \text{if $i \neq l$} \\ u_p & \text{if $i = l$} \end{cases}.
\]
Since $\langle \nabla G_h(\mf{u}),\mf{v} \rangle = 0$ for every $h$, we have that $\mf{v} \in T_\mf{u}(\Ncal)$ (for the reader's convenience, we recall that $G_h$ has been defined in \eqref{def G_h}). Plugging this choice of $\mf{v}$ into \eqref{claim on second diff}, we infer that
\begin{equation}\label{eq48so}
\begin{split}
0 &\le \|u_p\|_{l}^2 - \sum_{i \in I_{\bar h}} \int_{\Omega} \beta_{il} u_i^2 u_p^2 - \sum_{i \not \in I_{\bar h}} \int_{\Omega} \beta_{il} u_i^2 u_p^2 \\
& \le \|u_p\|_{p}^2 - \sum_{i \in I_{\bar h} \setminus \{l\}} \int_{\Omega} \beta_{\bar h} u_i^2 u_p^2 - \sum_{i \not \in I_{\bar h}} \int_{\Omega} b u_i^2 u_p^2,
\end{split}
\end{equation}
where we used assumptions ($i$)-($iii$). On the other hand, testing the equation for $u_p$ against $u_p$ itself, and recalling that $u_l \equiv 0$, we deduce that
\begin{equation}\label{eq49so}
\begin{split}
\|u_p\|_{p}^2 &= \sum_{i \in I_{\bar h} \setminus \{l\}} \int_{\Omega} \beta_{ip} u_i^2 u_p^2 + \sum_{i \not \in I_{\bar h}} \int_{\Omega} \beta_{ip} u_i^2 u_j^2 \\
&=\int_{\Omega} \beta_{pp} u_p^4+  \sum_{i \in I_{\bar h} \setminus \{l,p\}} \int_{\Omega} \beta_{\bar h} u_i^2 u_p^2 + \sum_{i \not \in I_{\bar h}} \int_{\Omega} b u_i^2 u_p^2.
\end{split}
\end{equation}
Therefore, coming back to \eqref{eq48so}, we obtain
\[
0  \le \int_{\Omega} \beta_{pp} u_p^4 +  \sum_{i \in I_{\bar h} \setminus \{l,p\}} \int_{\Omega} \beta_{\bar h} u_i^2 u_p^2 - \sum_{i \in I_{\bar h} \setminus \{l\}} \int_{\Omega} \beta_{\bar h} u_i^2 u_p^2 = \int_{\Omega} \left(\beta_{pp}-\beta_{\bar h} \right) u_p^4 <0
\]
whenever $\beta_{\bar h} > \beta_{pp}$, which is guaranteed by our assumptions. $\qedhere$

\subsection{Proof of Theorem \ref{thm: weak coop and strong coop 2}}\label{subsec:second_new_results_positive}
As in the previous subsection, let $l\in I_{\bar h}$ with $u_l=0$. From Lemma \ref{lemma:lower_bounds} and since $\beta_{\bar h}>\max_{i
\in I_{\bar h}}\{\beta_{ii}\}$, there exists $p\in I_{\bar h}$ such that
\[
\beta_{\bar h} |u_p|_{L^4}^2 \geq \frac{\delta}{|I_{\bar h}|}\geq \frac{\delta}{d}.
\]
We have
\[
d^2 J(\mf{u})[\mf{v},\mf{v}] \ge 0 \quad \text{for every $\mf{v} \in T_\mf{u}(\Ncal)$},
\]
and take the admissible variation $\mf{v}$ defined  by 
\[
v_i:= \begin{cases} 0 & \text{if $i \neq l$} \\ u_p & \text{if $i = l$} \end{cases}
\]
Thus, as in \eqref{eq48so} and \eqref{eq49so}, we find
\[
0 \le \|u_p\|_{p}^2 - \sum_{i \in I_{\bar h} \setminus \{l\}} \int_{\Omega} \beta_{\bar h} u_i^2 u_p^2 - \sum_{i \not \in I_{\bar h}} \int_{\Omega} \beta_{il} u_i^2 u_p^2,
\]
and also
\[
\|u_p\|_{p}^2 =  \int_{\Omega} \beta_{pp} u_p^4+  \sum_{i \in I_{\bar h} \setminus \{l,p\}} \int_{\Omega} \beta_{\bar h} u_i^2 u_p^2 + \sum_{i \not \in I_{\bar h}} \int_{\Omega} \beta_{ip} u_i^2 u_p^2.
\]
Therefore, and recalling the explicit shapes $K=S^2/16\bar C$ and $\delta=S/2d$ from Lemma \ref{lemma:lower_bounds} and Remark \ref{rem: constant K}, 
\begin{align*}
0 & \le \int_{\Omega} \beta_{pp} u_p^4 +  \sum_{i \in I_{\bar h} \setminus \{l,p\}} \int_{\Omega} \beta_{\bar h} u_i^2 u_p^2 - \sum_{i \in I_{\bar h} \setminus \{l\}} \int_{\Omega} \beta_{\bar h} u_i^2 u_p^2 + \sum_{i \not \in I_{\bar h}} \int_{\Omega} ( \beta_{ip}-\beta_{il} ) u_i^2 u_p^2 \\
& = \int_{\Omega} \left(\beta_{pp}-\beta_{\bar h} \right) u_p^4 +  \sum_{i \not \in I_{\bar h}} \int_{\Omega} ( \beta_{ip}-\beta_{il} ) u_i^2 u_p^2 \\
& \le \underbrace{\left(\beta_{pp}-\beta_{\bar h} \right)}_{<0} |u_p|_{L^4}^4 + \frac{2K}{\alpha d^2 S}  |u_p|_{L^4}^2 \sum_{i \not \in I_{\bar h}} \|u_i\|_i^2 \\
&\le \frac{\beta_{pp}-\beta_{\bar h}}{\beta_{\bar h}} \frac{\delta}{d} |u_p|_{L^4}^2 + \frac{8K\bar C}{\alpha d^2 S}  |u_p|_{L^4}^2\\
& = \frac{S}{2d^2} \left(\frac{\beta_{pp}}{\beta_{\bar h}}-1+\frac{1}{\alpha}\right) |u_p|_{L^4}^2 = \frac{S}{2d^2} \left(\frac{\beta_{pp}}{\beta_{\bar h}}- \frac{\alpha-1}{\alpha}\right) |u_p|_{L^4}^2 <0,
\end{align*}
where we used the assumption on the coupling parameters, and the estimates of Lemmas \ref{eq:c_B_universalbound} and \ref{lemma:lower_bounds}.

\section{Non existence results in $H^1(\R^N)$}\label{sec: les space}

In this section we prove Theorem \ref{thm: les in space comp}, which illustrates that when working in $\Omega=\R^N$ in presence of simultaneous cooperation and competition, in order to find some kind of least energy solution it is often necessary to work in $H^1_{\rm rad}(\R^N)$ instead that in $H^1(\R^N)$. We choose a $m$-decomposition $\mf{a}$ of $d$, and we assume that the basic assumption \eqref{basic assumption in space} holds true. Throughout this section we assume that $\beta_{ij}>0$ for every $(i,j)\in I_h^2$ (recall the definition \eqref{def Ih} of $I_h$) and, for every $h=1,\dots,m$, we consider the sub-system
\begin{equation}\label{sub-sys}
\begin{cases}
-\Delta v_i + \lambda_i v_i = \sum_{j \in I_h} \beta_{ij} v_i v_j^2 & \text{in $\R^N$} \\
v_i \in H^1(\R^N),
\end{cases} \qquad \forall \ i \in I_h.
\end{equation}
We introduce the functional
\[
E_h(\mf{v}):= \int_{\R^N} \sum_{i \in I_h} \frac{1}{2} \left(|\nabla v_i|^2 + v_i^2\right) - \frac{1}{4}\sum_{(i,j) \in I_h^2} \beta_{ij} v_i^2 v_j^2,
\]
and the Nehari manifold for the system \eqref{sub-sys}, defined by
\[
\mathcal{M}_h := \left\{ \mf{v} \in (H^1(\R^N))^{a_h-a_{h-1}}: \text{$\mf{v} \neq \mf{0}$ and $\langle \nabla E_h(\mf{v}),\mf{v} \rangle = 0$} \right\}.
\]
We set 
\[
l_h:= \inf_{\mathcal{M}_h} E_h.
\]

The strategy consists in showing that $l$, defined in \eqref{def di l}, coincides with the sum of the least energy levels $l_h$ of the uncoupled sub-systems \eqref{sub-sys}. This is inspired by Theorem 1 in \cite{LinWei}, which is a particular case of our Theorem \ref{thm: les in space comp} (for $m=d$, $\Kcal_1=\emptyset$). We point out that our proof present substantial differences with respect to the one in \cite{LinWei}, referring to the forthcoming Remark \ref{rem: on differences in proofs} for more details. 

 Before proceeding, we need the following preliminary result. Although it is essentially known by the community, we present a short proof of it here, as we were not able to find any reference.

\begin{lemma}\label{lemma:exponentialdecay} Let $(u_i)_{i\in I_h}$ be a nonnegative solution of \eqref{sub-sys} with $u_i \in H^1(\R^N)$. Then for each $0<\beta<\min_i\{\lambda_i\}$ there exists $\alpha>0$ such that
\[
|u_i(x)|\leq \alpha e^{-\sqrt{1+\beta |x|^2}},\qquad \forall x\in \R^N, \ i\in I_h.
\]
\end{lemma}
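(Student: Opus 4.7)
The plan is to use a standard supersolution/comparison argument in three steps, once decay at infinity of the $u_i$'s has been established. The main obstacle is actually the a priori decay at infinity $u_i(x)\to 0$, because exponential decay is then obtained by a rather mechanical maximum-principle comparison with an explicit supersolution.

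First I would establish that $u_i\in L^\infty(\R^N)\cap C^{1,\alpha}_{\rm loc}(\R^N)$ and that $u_i(x)\to 0$ as $|x|\to\infty$ for every $i\in I_h$. Boundedness comes from a Brezis--Kato / Moser bootstrap applied to the equation $-\Delta u_i+\lambda_i u_i=\sum_j \beta_{ij}u_i u_j^2$, using that the nonlinearity is Sobolev-subcritical for $N=2,3$ and that $u_j\in H^1(\R^N)\hookrightarrow L^4(\R^N)$; interior elliptic regularity then upgrades the solution to $C^{1,\alpha}_{\rm loc}$ with uniform estimates on balls of fixed radius. The decay at infinity follows from the fact that $u_i\in H^1(\R^N)$ together with uniform $C^{1,\alpha}$-bounds on unit balls: if $|x_n|\to\infty$ with $u_i(x_n)\not\to 0$, then the translates $u_i(\cdot+x_n)$ would converge, along a subsequence and in $C^0_{\rm loc}$, to a nonzero function, contradicting $u_i\in L^2(\R^N)$.

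Next I would build the explicit supersolution. Fix $\beta\in(0,\min_i\lambda_i)$, set $\varphi(x):=\sqrt{1+\beta|x|^2}$ and $w(x):=e^{-\varphi(x)}$. A direct computation gives
\[
-\Delta w+\beta w=w\!\left(-|\nabla\varphi|^2+\Delta\varphi+\beta\right),\qquad |\nabla\varphi|^2=\frac{\beta^2|x|^2}{\varphi^2},\qquad \Delta\varphi=\frac{N\beta}{\varphi}-\frac{\beta^2|x|^2}{\varphi^3}.
\]
Since $\beta-|\nabla\varphi|^2=\beta/\varphi^2$ and $\Delta\varphi\geq 0$ for $N\geq 1$, it follows that $-\Delta w+\beta w\geq 0$ on all of $\R^N$.

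Finally I would compare. By Step 1 there exists $R>0$ such that $\sum_{j\in I_h}\beta_{ij}u_j^2(x)<\lambda_i-\beta$ for $|x|\geq R$, so each $u_i$ satisfies
\[
-\Delta u_i+c_i(x)u_i=0,\qquad c_i(x):=\lambda_i-\sum_{j\in I_h}\beta_{ij}u_j^2(x)\geq\beta>0,\quad |x|\geq R.
\]
The same inequality $-\Delta w+c_i(x)w\geq -\Delta w+\beta w\geq 0$ shows $w$ is a positive supersolution of the operator $-\Delta+c_i(x)$ on $\{|x|\geq R\}$. Choose $\alpha>0$ so large that $\alpha w\geq u_i$ on $\partial B_R$ for every $i\in I_h$ (possible since $w>0$ on $\partial B_R$ and $u_i\in L^\infty$); since both $u_i$ and $\alpha w$ vanish at infinity, the maximum principle in the exterior domain $\{|x|>R\}$ yields $u_i(x)\leq \alpha e^{-\sqrt{1+\beta|x|^2}}$ there. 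Enlarging $\alpha$ further, if necessary, so that $\|u_i\|_{L^\infty(B_R)}\leq\alpha e^{-\sqrt{1+\beta R^2}}$ extends the bound to all of $\R^N$, completing the proof.
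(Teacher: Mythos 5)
Your proof is correct and follows essentially the same strategy as the paper: establish boundedness and decay at infinity (Brezis--Kato plus elliptic estimates plus $u_i\in L^2$), then compare with the explicit supersolution $e^{-\sqrt{1+\beta|x|^2}}$. The only difference is a minor one in the final comparison step: you split $\R^N$ into the exterior domain $\{|x|>R\}$ (where the classical maximum principle applies, using only $-\Delta w+\beta w\geq 0$) and the ball $B_R$ (handled trivially by enlarging $\alpha$), whereas the paper establishes the differential inequality $-\Delta(z-u_i)+\beta(z-u_i)\geq 0$ on all of $\R^N$ for $\alpha$ large, exploiting the explicit lower bound $-\Delta z+\beta z\geq\frac{\alpha\beta}{1+\beta|x|^2}e^{-\sqrt{1+\beta|x|^2}}$ to absorb the nonlinearity inside $B_R$, and then tests against $(z-u_i)^-$. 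Both routes are standard and correct; yours needs only the sign of the supersolution inequality but invokes the classical exterior maximum principle, while the paper's is purely variational and avoids the exterior-domain discussion at the cost of the quantitative lower bound on $-\Delta z+\beta z$.
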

\begin{proof} By a Brezis-Kato type argument, one has that $u_i \in L^\infty(\R^N)$. Thus, by standard gradient estimates for Poisson's equation (see \cite[eq. (3.15)]{Gilbargtrudinger}), we deduce also that $\nabla u_i \in L^\infty(\R^N)$. Since we also assume that $u_i\in L^2(\R^N)$, this clearly implies that $u_i\to 0$ as $|x|\to \infty$.

Defining $z(x)=\alpha \exp(-\sqrt{1+\beta |x|^2})$, a straightforward computation gives
\[
-\Delta z+\beta z\geq \frac{\alpha \beta}{1+\beta|x|^2} e^{-\sqrt{1+\beta |x|^2}}.
\]
The difference $z-u_i$ satisfies:
\[
-\Delta (z-u_i) + \beta (z-u_i)\geq (\lambda_i-\beta) u_i -\sum_{j=1}^k \beta_{ij} u_i u_j^2 + \frac{\alpha \beta}{1+\beta|x|^2} e^{-\sqrt{1+\beta |x|^2}},
\]
where $k=|I_h|$. Fix any $0<\beta<\lambda_i$. Since $\sum_{j=1}^k \beta_{ij} u_i u_j^2\to 0$ as $|x|\to \infty$, we can take $R>0$  such that
\[
\sum_{j=1}^k \beta_{ij} u_i u_j^2\leq (\lambda_i-\beta) u_i, \quad \text{ for } |x|\geq R,
\]
which implies that $-\Delta (z-u_i)+\beta (z-u_i)\geq 0$ for $|x|\geq R$. On the other hand, there exists $C>0$ and a sufficiently large $\alpha>0$, such that
\[
\sum_{j=1}^k \beta_{ij} u_i u_j^2 \leq C \leq \frac{\alpha \beta}{1+\beta R^2} e^{-\sqrt{1+\beta R^2}}\leq \frac{\alpha \beta}{1+\beta|x|^2} e^{-\sqrt{1+\beta |x|^2}} \quad \text{ for } |x| \le R.
\]
To sum up, we show that it is possible to choose $\alpha>0$ in such a way that
\[
-\Delta (z-u_i) + \beta (z-u_i)\geq 0 \qquad \text{ in } \R^N,
\]
and testing the inequality with $(z-u_i)^-$, we deduce that $u_i \leq z$.
\end{proof}

For $h=1,\dots,m$, we introduce 
\begin{align*}
\widetilde E_h(\mf{v}) &:= \frac{1}{4} \sum_{i \in I_h} \int_{\Omega} (|\nabla v_i|^2 + \lambda_i v_i^2) = \frac{1}{4} \|\mf{v}\|_h^2 \\
\widetilde{\mathcal{M}}_h& := \left\{ \mf{v}: \text{$\mf{v} \neq \mf{0}$ and } \|\mf{v}\|_h^2 \le \sum_{(i,j) \in I_h^2} \int_{\R^N} \beta_{ij} v_i^2 v_j^2 \right\} \\
\tilde l_h & := \inf_{\widetilde{\mathcal{M}}_h} \widetilde E_h.
\end{align*}

\begin{lemma}\label{lem: all achieved}
Both $l_h$ and $\tilde l_h$ are achieved, $l_h= \tilde l_h$, and any minimizer for $\tilde l_h$ is a minimizer for $l_h$. 
\end{lemma}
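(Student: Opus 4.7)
The plan splits into two nearly independent tasks: the algebraic identification of the two levels (together with the ``transfer of minimizers'' statement), and the analytic existence of a minimizer, which I would carry out on the convex-looking level $\tilde l_h$.

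\textbf{Reduction of the two levels.} First observe that on $\mathcal{M}_h$ the Nehari constraint $\langle \nabla E_h(\mf{v}),\mf{v}\rangle = 0$ reads $\|\mf{v}\|_h^2 = \sum_{(i,j)\in I_h^2}\int_{\R^N} \beta_{ij} v_i^2 v_j^2$, so $\mathcal{M}_h\subset \widetilde{\mathcal{M}}_h$ and $E_h \equiv \widetilde E_h$ on $\mathcal{M}_h$, giving $\tilde l_h\le l_h$. For the reverse direction, take $\mf{v}\in\widetilde{\mathcal{M}}_h$ and set $A=\|\mf{v}\|_h^2$, $B=\sum_{(i,j)\in I_h^2}\int \beta_{ij} v_i^2 v_j^2$ so that $0<A\le B$. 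Scaling by $t_\star=\sqrt{A/B}\in(0,1]$ places $t_\star\mf{v}$ in $\mathcal{M}_h$ and
\begin{equation}
l_h \;\le\; E_h(t_\star\mf{v}) \;=\; \tfrac14\, t_\star^2\, A \;=\; \frac{A^2}{4B} \;\le\; \frac{A}{4} \;=\; \widetilde E_h(\mf{v}),
\end{equation}
so taking the infimum gives $l_h\le \tilde l_h$, hence $l_h=\tilde l_h$. Moreover, if $\bar{\mf{v}}$ attains $\tilde l_h$, the chain of inequalities above forces $A=B$ and $t_\star=1$, i.e.\ $\bar{\mf{v}}\in\mathcal{M}_h$ with $E_h(\bar{\mf{v}})=l_h$, which yields the transfer claim.

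\textbf{Existence on $\widetilde{\mathcal{M}}_h$.} Pick a minimizing sequence $\mf{v}^n\in\widetilde{\mathcal{M}}_h$. Replacing each $v_i^n$ by $|v_i^n|$ and then by its Schwarz symmetrization $(v_i^n)^\ast$, the P\'olya--Szeg\H o inequality decreases $\|v_i^n\|_i^2$ while the Hardy--Littlewood rearrangement inequality applied to $(v_i^n)^2$ and $(v_j^n)^2$ increases $\int (v_i^n)^2(v_j^n)^2$. Since $\beta_{ij}>0$ for every $(i,j)\in I_h^2$, both the constraint $\|\mf{v}\|_h^2\le B$ and the bound $\widetilde E_h(\mf{v}^n)\le\tilde l_h+o(1)$ are preserved, so we may assume $\mf{v}^n\in H^1_{\rad}(\R^N)^{|I_h|}$ with non-negative radial decreasing components. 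Boundedness of $\|\mf{v}^n\|_h$ yields, up to a subsequence, $\mf{v}^n\rightharpoonup \mf{v}$ weakly in $H^1$ and $v_i^n\to v_i$ strongly in $L^4$ via the compact embedding $H^1_{\rad}(\R^N)\hookrightarrow L^4(\R^N)$, valid for $N=2,3$. Non-triviality of $\mf{v}$ comes from the Sobolev bound $\|\mf{v}^n\|_h^2\le B_n\le C\|\mf{v}^n\|_h^4$, which forces $B_n\ge 1/C>0$ and, by $L^4$-convergence, $\sum\int\beta_{ij} v_i^2 v_j^2\ge 1/C>0$. Weak lower semicontinuity of $\|\cdot\|_h^2$ together with strong convergence of the coupling term gives $\mf{v}\in\widetilde{\mathcal{M}}_h$ and $\widetilde E_h(\mf{v})\le \tilde l_h$, so $\mf{v}$ attains $\tilde l_h$; by the first paragraph it also attains $l_h$.

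\textbf{Expected obstacle.} The only delicate points are the persistence of the constraint under symmetrization and the non-vanishing of the weak limit; both hinge on the strict positivity of every $\beta_{ij}$ with $(i,j)\in I_h^2$, which is what makes Schwarz rearrangement energy-decreasing and the constraint stable. Without this sign assumption one would have to resort to a concentration--compactness argument, but in the present cooperative regime the symmetrization route bypasses that difficulty cleanly.
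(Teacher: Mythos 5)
Your proof is correct and follows essentially the same route as the paper: the trivial inclusion $\mathcal{M}_h\subset\widetilde{\mathcal{M}}_h$ gives $\tilde l_h\le l_h$; the existence of a minimizer for $\tilde l_h$ comes from Schwarz symmetrization together with the compact embedding $H^1_{\rad}(\R^N)\hookrightarrow L^4(\R^N)$; and the quadratic/quartic scaling $t_\star=\sqrt{A/B}$ yields $l_h\le\tilde l_h$ and the transfer of minimizers. The only small difference is presentational: you establish $l_h\le\tilde l_h$ by projecting every $\mf{v}\in\widetilde{\mathcal{M}}_h$ onto $\mathcal{M}_h$ before discussing existence, whereas the paper applies the same scaling only to a minimizer $\tilde{\mf{v}}$ after existence is settled; the two versions are mathematically equivalent.
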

\begin{proof}
Since $\mathcal{M}_h \subset \widetilde{\mathcal{M}}_h$ and 
\[
E_h(\mf{v}) = \frac{1}{4} \|\mf{v}\|_h^2 = \widetilde E_h(\mf{v}) \qquad \forall \mf{v} \in \mathcal{M}_h,
\]
we have $\tilde l_h \le l_h$. As far as $\tilde l_h$ is concerned, we start by observing that, if $\mf{v} \in \widetilde{\mathcal{M}}_h$, then also its Schwarz symmetrization $\mf{v}^* \in \widetilde{\mathcal{M}}_h$, and by the Polya-Szego inequality $\|\mf{v}^*\|_h^2 \le \|\mf{v}\|_h^2$. Therefore, 
\[
\tilde l_h = \inf\left\{ \widetilde E_h(\mf{v}): \mf{v} \in \widetilde{\mathcal{M}}_h, \text{ $\mf{v}$ is radial}\right\}.
\] 
The functional $\widetilde E_h$ is coercive in $H^1_{\rad}(\R^N)$, so that any minimizing sequence is bounded from above. Reasoning exactly as in Lemma \ref{lemma:lower_bounds}, any such sequence is also bounded from below. This permits immediately to obtain the existence of a minimizer for $\tilde l_h$ (in this step it is used the fact that $H^1_{\rad}(\R^N)$ compactly embeds into $L^4(\R^N)$ for $N=2,3$). To complete the proof, we show that if $\tilde{\mf{v}}$ is a minimizer for $\tilde l_h$, then $\tilde{\mf{v}} \in \mathcal{M}_h$ and $\tilde E_h(\tilde{\mf{v}})=l_h$. Let $\Psi(t):= E_h(\sqrt{t} \tilde{\mf{v}})$. By definition we have that 
\[
t>0 \quad \text{and} \quad \Psi'(t) = 0 \quad \Longleftrightarrow \quad \sqrt{t} \tilde{\mf{v}} \in \tilde{\mathcal{M}}_h.
\]
By direct computations, it is easy to check that the unique positive critical point of $\Psi$ is given by
\[
\tilde t= \frac{\|\tilde{\mf{v}}\|_h^2}{ \sum_{(i,j) \in I_h^2}\int_{\R^N} \tilde v_i^2 \tilde v_j^2} \le 1,
\]
where the last estimate follows by the fact that $\tilde{\mf{v}} \in \tilde{\mathcal{M}}_h$. Thus, we have
\[
l_h \le E_h\left(\sqrt{\tilde t} \tilde{\mf{v}}\right) = \frac{1}{4} \|\tilde{\mf{v}}\|_h^2 \tilde t \le \frac{1}{4} \|\tilde{\mf{v}}\|_h^2 = \tilde l_h,
\]
which implies $l_h=\tilde l_h$, and in turn forces $\tilde t=1$, that is $\tilde{\mf{v}} \in \mathcal{M}_h$. 
\end{proof}

From now on, for each $h=1,\ldots, m$, we fix a minimizer $\mf{v}^h$ for $l_h$, hence a nontrivial solution of \eqref{sub-sys}. We have the following decay estimate.

\begin{lemma}\label{lem: decay}
Let $e_1 \neq e_2 \in \mathbb{S}^{N-1}$, and $\sigma_1,\sigma_2>0$. Then, whenever $h_1\neq h_2$,
\[
\lim_{R \to +\infty} \int_{\R^N}   \sum_{(i,j)\in I_{h_1} \times I_{h_2}} \left(v^{h_1}_i(x- R e_1) v^{h_2}_j(x-R e_2)\right)^2 \, dx = 0.
\]
\end{lemma}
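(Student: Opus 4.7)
The plan is to exploit the pointwise exponential decay of the solutions $\mf{v}^h$ furnished by Lemma \ref{lemma:exponentialdecay}, together with the fact that the two translation centers $R e_1$ and $R e_2$ drift apart linearly in $R$ (since $e_1\neq e_2$). No variational argument should be needed; this is a direct integral estimate.

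First, I would fix some $0<\beta<\min_i\{\lambda_i\}$ and apply Lemma \ref{lemma:exponentialdecay} to each of the finitely many nonnegative components $v^{h_k}_i$ (with $k=1,2$ and $i\in I_{h_k}$), obtaining a single constant $\alpha>0$ such that
\[
0\leq v^{h_k}_i(y) \leq \alpha \, e^{-\sqrt{1+\beta|y|^2}} \quad \text{for all } y\in\R^N,\ k=1,2,\ i\in I_{h_k}.
\]
Next, set $\delta:=|e_1-e_2|/2>0$ and decompose $\R^N=A_R\cup B_R$, where
\[
A_R:=\{x\in\R^N:\ |x-Re_1|\geq R\delta\},\qquad B_R:=\{x\in\R^N:\ |x-Re_2|\geq R\delta\}.
\]
This decomposition is valid because if $x$ belonged to neither set, the triangle inequality would give $|Re_1-Re_2|\leq |x-Re_1|+|x-Re_2|<2R\delta=|Re_1-Re_2|$, a contradiction.

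The core estimate is then the following: on $A_R$, the pointwise bound yields
\[
v^{h_1}_i(x-Re_1)^2 \leq \alpha^2 e^{-2\sqrt{1+\beta R^2\delta^2}},
\]
and since $v^{h_2}_j\in L^2(\R^N)$ (being in $H^1(\R^N)$), a change of variables gives
\[
\int_{A_R} v^{h_1}_i(x-Re_1)^2\, v^{h_2}_j(x-Re_2)^2\,dx \leq \alpha^2 e^{-2\sqrt{1+\beta R^2\delta^2}} \|v^{h_2}_j\|_{L^2}^2.
\]
The symmetric bound holds on $B_R$ after swapping the roles of the two factors. Summing these two contributions over the finite index set $(i,j)\in I_{h_1}\times I_{h_2}$ gives
\[
\int_{\R^N}\sum_{(i,j)\in I_{h_1}\times I_{h_2}} \bigl(v^{h_1}_i(x-Re_1)v^{h_2}_j(x-Re_2)\bigr)^2 dx \leq C\, e^{-2\sqrt{1+\beta R^2\delta^2}},
\]
for a constant $C$ independent of $R$, and the right-hand side goes to $0$ as $R\to+\infty$.

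I do not foresee any genuine obstacle here: the only slightly delicate point is the triangle-inequality argument showing $A_R\cup B_R=\R^N$, which is what lets us route the integrand's exponential smallness into one factor while controlling the other by its $L^2$ norm. Everything else is a standard consequence of the decay lemma already in place.
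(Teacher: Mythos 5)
Your proof is correct and follows the same general route as the paper: invoke the exponential decay from Lemma \ref{lemma:exponentialdecay}, use $e_1\neq e_2$ to see that the translation centers separate linearly in $R$, and push the exponential smallness of one factor against the $L^2$ norm of the other. The paper only gives a one-line sketch (``the thesis follows easily''), whereas you supply a clean and complete argument via the covering $\R^N=A_R\cup B_R$, which is exactly the right way to make the sketch rigorous. Two small remarks: the paper's statement lists $\sigma_1,\sigma_2>0$ in the hypotheses but they never appear in the formula (an evident typo, likely a leftover of an earlier version with rescaled profiles); your proof correctly ignores them. Also, since $A_R$ and $B_R$ overlap rather than partition $\R^N$, you are silently using that the integral over $\R^N$ is bounded by the sum of the integrals over $A_R$ and $B_R$, which is of course fine for a nonnegative integrand but could be stated.
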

\begin{proof}
Since $e_1 \neq e_2$, $R|\sigma_1 e_1- \sigma_2e_2| \to +\infty$ as $R \to +\infty$. Recalling from Lemma \ref{lemma:exponentialdecay} that each $v^h_i$ is exponentially decaying as $|x| \to +\infty$, the thesis follows easily.
\end{proof}

In the next lemma we show that the least energy level of the complete $d$ system \eqref{system}, which is denoted by $l$, can be controlled by the sum of the least energy levels $l_h$ of the sub-systems \eqref{sub-sys}.

\begin{lemma}
In the previous notation, we have $l \le \sum_{h=1}^m l_h$.
\end{lemma}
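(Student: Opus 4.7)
The plan is to build an almost-minimizer for $l$ by gluing together translated copies of minimizers of the uncoupled sub-systems \eqref{sub-sys}, exploiting the fact that the negative cross-group interactions become negligible when supports are pushed far apart. For each $h=1,\dots,m$, let $\mf{v}^h=(v^h_i)_{i\in I_h}$ be a nonnegative minimizer of $l_h$ (existing by Lemma~\ref{lem: all achieved}); by Lemma~\ref{lemma:exponentialdecay} each component decays exponentially. Fix distinct unit vectors $e_1,\dots,e_m\in\mathbb{S}^{N-1}$ and, for $R>0$, define the translated test function $\mf{w}^R=(w^R_i)_{i=1}^d\in H^1(\mathbb{R}^N,\mathbb{R}^d)$ by $w^R_i(x):=v^h_i(x-Re_h)$ whenever $i\in I_h$. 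Translation invariance of the norms and interactions inside each block gives $A_h:=\sum_{i\in I_h}\|w^R_i\|_i^2=\sum_{(i,j)\in I_h^2}\int_{\R^N}\beta_{ij}(w^R_i)^2(w^R_j)^2=4l_h$, while Lemma~\ref{lem: decay} yields $\epsilon_{hk}(R):=\sum_{(i,j)\in I_h\times I_k}\int_{\R^N}\beta_{ij}(w^R_i)^2(w^R_j)^2\to 0$ as $R\to\infty$ for every $h\neq k$.

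Next I look for scalars $t_1(R),\dots,t_m(R)>0$ such that $\mf{u}^R$ defined blockwise by $u^R_i:=\sqrt{t_h(R)}\,w^R_i$ for $i\in I_h$ belongs to $\mathcal{M}$. Writing out $\sum_{i\in I_h}\partial_i J(\mf{u}^R)u^R_i=0$ and dividing by $t_h$, the $m$ Nehari constraints become the \emph{linear} system
\begin{equation}
A_h=t_h(R)\,A_h+\sum_{k\neq h}t_k(R)\,\epsilon_{hk}(R),\qquad h=1,\dots,m,
\end{equation}
that is $E(R)\mf{t}=\mf{A}$ with $E(R)_{hh}=A_h$, $E(R)_{hk}=\epsilon_{hk}(R)$ for $h\neq k$, and $\mf{A}=(A_1,\dots,A_m)$. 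Since $E(R)\to\diag(A_1,\dots,A_m)$ (invertible) as $R\to\infty$, the system admits a unique solution $\mf{t}(R)$ for all large $R$, and $\mf{t}(R)\to\mf{1}$; in particular $t_h(R)>0$, so $\mf{u}^R\in\mathcal{M}$ eventually.

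Finally, using the identity $J(\mf{u})=\tfrac14\sum_{i=1}^d\|u_i\|_i^2$ valid on $\mathcal{M}$ (the derivation in \eqref{funct on N} only uses the block Nehari identities, which define $\mathcal{M}$ as well), I conclude
\begin{equation}
l\leq J(\mf{u}^R)=\frac{1}{4}\sum_{h=1}^m t_h(R)\sum_{i\in I_h}\|w^R_i\|_i^2=\frac{1}{4}\sum_{h=1}^m t_h(R)A_h=\sum_{h=1}^m t_h(R)\,l_h\ \xrightarrow[R\to\infty]{}\ \sum_{h=1}^m l_h.
\end{equation}
The only delicate point is ensuring existence of admissible scaling parameters $t_h(R)$, but the specific quartic structure of $J$ combined with the fact that the minimizers $\mf{v}^h$ are already on their respective Nehari manifolds collapses the projection problem to a linear one, which becomes trivially solvable as $\epsilon_{hk}(R)\to 0$. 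Hence the lemma follows, with no hypothesis on the \emph{signs} of the cross-group $\beta_{ij}$'s needed for this upper bound.
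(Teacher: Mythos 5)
Your proof is correct and follows essentially the same strategy as the paper: translate the block minimizers far apart in distinct directions, solve the resulting linear system for the Nehari scalings $t_h(R)$, use the exponential decay (Lemmas \ref{lemma:exponentialdecay} and \ref{lem: decay}) to show the matrix converges to an invertible diagonal one with $\mf{t}(R)\to\mf{1}$, and pass to the limit. The only cosmetic difference is that you evaluate $J(\mf{u}^R)=\sum_h t_h(R)\,l_h$ directly via the constrained identity, whereas the paper splits $J=\sum_h E_h+\text{cross terms}$ and lets the cross terms vanish; your closing remark that no sign hypothesis on the cross-group couplings is used for this upper bound is also accurate.
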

\begin{proof}
First of all, we observe that 
\[
J(\mf{u}) = \sum_{h=1}^m E_h(\mf{u}_h) + \sum_{(i,j) \in \mathcal{K}_2} \int_{\R^N} \beta_{ij} u_i^2 u_j^2.
\]
Let $e_1 \neq e_2 \neq \dots \neq e_m \in \mathbb{S}^{N-1}$ be $m$ different directions in $\R^N$. For $R>0$, we define $\mf{u}^R$ by means of
\[
u_i^R(x) := v_i^h(x-R e_h) \qquad \text{for $i \in I_h$, for $h=1,\dots,m$}.
\] 
Clearly, by a change of variables we have that 
\[
E_h(\mf{u}_h^R)= l_h \qquad \forall R>0.
\]
We aim at solving the linear system in $t_1,\dots,t_m$
\[
\frac{\partial}{\pa t_h} J\left(\sqrt{t_1} \mf{u}_1^R,\dots, \sqrt{t_m} \mf{u}_m^R\right) = 0 \quad \Longleftrightarrow \quad \sum_{k=1}^m M_B(\mf{u}^R)_{hk} t_k = \|\mf{u}^R_h\|_h^2.
\]
We claim that, for every $R \gg 1$ sufficiently large, this system has a solution $(t_1^R,\dots,t_m^R)$ with $0<t_h^R \to 1$ as $R \to +\infty$. Once that this is proved, we deduce that 
\begin{equation}\label{test on nehari}
\left(\sqrt{t_1^R} \mf{u}_1^R,\dots,\sqrt{t_m^R}\mf{u}_m^R\right) \in \mathcal{M}.
\end{equation}
To prove the claim, we observe that by Lemma \ref{lem: decay} $M_B(\mf{u}^R)_{hk} \to 0$ as $R \to +\infty$ for every $h \neq k$. Since on the contrary $M_{B}(\mf{u}^R)_{hh}$ is positive and constant in $R$, we deduce that $M_B(\mf{u}^R)$ is strictly diagonally dominant, and hence invertible, for every $R$ sufficiently large. As a consequence, for any such $R$ we can compute
\[
\left( \begin{array}{c} t_1^R \\ \vdots \\ t_m^R \end{array} \right) = M_B(\mf{u}^R)^{-1} \left( \begin{array}{c} \|\mf{u}_1^R\|_1^2 \\ \vdots \\ \|\mf{u}_m^R\|_m^2  \end{array} \right) = M_B(\mf{u}^R)^{-1} \left( \begin{array}{c} \|\mf{v}^1\|_1^2 \\ \vdots \\ \|\mf{v}^m\|_m^2  \end{array} \right),  
\]
But, as already observed, $M_B(\mf{u}^R)$ is converging to a diagonal matrix, whose diagonal entry in the $h$-th row is equal to 
\[
\sum_{(i,j) \in I_h^2} \int_{\R^N}  \left(v^h_i(x-R e_h) v^h_j(x-R e_h)\right)^2 \, dx = \sum_{(i,j) \in I_h^2} \int_{\R^N} (v_{i}^h v_{j}^h)^2,
\]
so that 
\[
\lim_{R \to +\infty} t_h^R = \frac{\|\mf{v}_h\|_h^2}{\sum_{(i,j) \in I_h^2} \int_{\R^N} (v_{i}^h v_{j}^h)^2} = 1,
\]
where we used the fact that by assumption $\mf{v}^h \in \mathcal{M}_h$. 

To sum up, we have just showed that for any large $R$ there exists $\mf{t}^R \simeq \mf{1}$ such that \eqref{test on nehari} holds. Therefore
\begin{align*}
l &\le \lim_{R \to +\infty} J \left(\sqrt{t_1^R} \mf{u}_1^R,\dots,\sqrt{t_m^R}\mf{u}_m^R\right) \\
&= \lim_{R \to +\infty} \sum_{h=1}^m E_h\left( \sqrt{t_h^R}\mf{u}_h^R\right) + \lim_{R \to +\infty} \sum_{h \neq k} M_B \left(\sqrt{t_1^R} \mf{u}_1^R,\dots,\sqrt{t_m^R}\mf{u}_m^R\right)  \\
&= \sum_{h=1}^m E_h\left( \mf{v}^h\right) = \sum_{h=1}^m l_h. \qedhere
\end{align*}
\end{proof}

Now we have to show that the opposite inequality holds.

\begin{lemma}
There holds $l \ge \sum_{h=1}^m l_h$.
\end{lemma}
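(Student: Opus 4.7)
The plan hinges on two simple observations, both made possible by the careful setup of the auxiliary sets $\widetilde{\mathcal{M}}_h$ and the identity $l_h = \tilde l_h$ from Lemma \ref{lem: all achieved}. For every $\mf{u} \in \mathcal{M}$, the task is to bound $J(\mf{u})$ from below by $\sum_{h=1}^m l_h$; taking the infimum then gives the lemma.

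First I would rewrite $J|_{\mathcal{M}}$ in a form that decouples the $m$ groups. Summing the $m$ constraints $\sum_{i \in I_h} \partial_i J(\mf{u}) u_i = 0$ over $h=1,\ldots,m$ yields
\[
\sum_{i=1}^d \|u_i\|_i^2 = \sum_{i,j=1}^d \int_{\R^N} \beta_{ij} u_i^2 u_j^2,
\]
which inserted back into the definition of $J$ produces the identity
\[
J(\mf{u}) = \frac{1}{4} \sum_{i=1}^d \|u_i\|_i^2 = \sum_{h=1}^m \widetilde E_h(\mf{u}_h).
\]
Notice that the inter-group quartic interactions disappear entirely from this expression, purely by virtue of the Nehari-type identity and without using any sign assumption.

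Second, I would verify that each group $\mf{u}_h$ automatically lies in the relaxed set $\widetilde{\mathcal{M}}_h$. This is precisely where the sign hypothesis $\beta_{ij} \le 0$ for $(i,j) \in \mathcal{K}_2$ enters. Isolating the $h$-th Nehari constraint splits the right-hand side into an intra-group piece and an inter-group piece, and the latter is nonpositive by hypothesis:
\[
\|\mf{u}_h\|_h^2 = \sum_{(i,j)\in I_h^2}\!\int \beta_{ij} u_i^2 u_j^2 + \mathop{\sum_{k=1}^m}_{k\neq h}\sum_{(i,j)\in I_h\times I_k}\!\int \beta_{ij} u_i^2 u_j^2 \le \sum_{(i,j)\in I_h^2}\!\int \beta_{ij} u_i^2 u_j^2.
\]
Together with $\|\mf{u}_h\|_h \neq 0$, this places $\mf{u}_h \in \widetilde{\mathcal{M}}_h$, so by Lemma \ref{lem: all achieved} we have $\widetilde E_h(\mf{u}_h) \ge \tilde l_h = l_h$ for each $h$.

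Summing these inequalities and combining with the decoupling identity gives $J(\mf{u}) \ge \sum_{h=1}^m l_h$, completing the proof. There is no genuine obstacle here: the content has essentially been distilled into the preceding lemmas, and what remains is the bookkeeping that matches the two pieces. The only point worth flagging is that the hypothesis $\beta_{ij}\le 0$ on $\mathcal{K}_2$ is used \emph{only} to push each $\mf{u}_h$ into the relaxed Nehari set (where the inequality $\le$ replaces the equality $=$), which is precisely why the relaxation was introduced in the first place.
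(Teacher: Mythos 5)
Your proof is correct and follows essentially the same route as the paper: both arguments show that the Nehari constraints plus the sign condition $\beta_{ij}\le 0$ on $\mathcal{K}_2$ force each block $\mf{u}_h$ into the relaxed set $\widetilde{\mathcal{M}}_h$, and then use $J(\mf{u}) = \tfrac14\sum_h\|\mf{u}_h\|_h^2 \ge \sum_h \tilde l_h = \sum_h l_h$. The paper phrases the membership in $\widetilde{\mathcal{M}}_h$ compactly via $0<\|\mf{u}_h\|_h^2 = \sum_k M_B(\mf{u})_{hk} \le M_B(\mf{u})_{hh}$, which is exactly your computation in matrix notation.
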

\begin{proof}
Let $\mf{u} \in \mathcal{M}$. Thanks to the assumption $\beta_{ij} \le 0$ for every $(i,j) \in \mathcal{K}_2$, we have
\[
0<\|\mf{u}_h\|_h^2 = \sum_{k=1}^m M_B(\mf{u})_{hk} \le M_B(\mf{u})_{hh},
\]
so that $\mf{u}_h \in \widetilde{\mathcal{M}}_h$ for every $h=1,\dots,m$. As a consequence
\begin{equation}\label{eq1}
\frac{1}{4}\|\mf{u}_h\|_h^2 \ge \inf_{\mf{v} \in \widetilde{\mathcal{M}}_h} \frac{1}{4} \|\mf{v}\|_h^2=\tilde l_h=l_h, \qquad  \text{ and }\qquad  J(\mf{u}) = \sum_{h=1}^m \frac{1}{4}\|\mf{u}_h\|_h^2 \ge \sum_{h=1}^m l_h. \qedhere
\end{equation}
\end{proof}

\begin{proof}[Conclusion of the proof of Theorem \ref{thm: les in space comp}]
By contradiction, we suppose that there exists $\mf{u} \in \mathcal{M}$ such that $J(\mf{u}) =l$. Thanks to the fact that $\mathcal{M}$ is a natural constraint (cf. Lemma \ref{lemma:N_is_manifold}, which holds also in $H^1(\R^N)$), $\mf{u}$ is a solution of \eqref{system}, which we can assume to be nonnegative. By the strong maximum principle and the definition of $\mathcal{M}$, we deduce that for every $h$ there exists $i_h$ such that $u_{i_h}>0$ in $\R^N$. To reach a contradiction, we observe that the first equation in \eqref{eq1}, together with the fact that $\sum_h l_h=l$, imply that necessarily
\begin{equation}\label{eq2}
\frac{1}{4}\|\mf{u}_h\|_h^2 = l_h = \tilde l_h.
\end{equation}
Since $\beta_{ij} \le 0$ for every $(i,j) \in \mathcal{K}_2$, we have $\mf{u}_h \in \widetilde{\mathcal{M}}_h$, so that $\mf{u}_h$ minimizes $\tilde l_h=l_h$. Thus, by the last statement of Lemma \ref{lem: all achieved}, also $\mf{u}_h\in \Mcal_h$, and in particular
\[
\|\mf{u}_h\|_h^2 = M_B(\mf{u})_{hh}\qquad \forall h=1,\ldots, m.
\]
This is in contradiction with the fact that, since $\mf{u}$ was supposed to be in $\mathcal{M}$, we have
\[
\|\mf{u}_{h_1}\|_{h_1}^2 = \sum_{k=1}^m M_B(\mf{u})_{h_1 k} \le M_B(\mf{u})_{h_1 h_1} + \int_{\R^N} \beta_{i_{h_1} i_{h_2}} u_{i_{h_1}}^2 u_{i_{h_2}}^2 < M_B(\mf{u})_{h_1 h_1},
\]
(for the reader's convenience we recall that $h_1$ and $h_2$ have been introduce in the assumptions of Theorem \ref{thm: les in space comp}).
\end{proof}

\begin{remark}\label{rem: on differences in proofs} 
A remarkable fact, which marks a significant difference in our proof with respect to that of Theorem 1 in \cite{LinWei}, is that we do not assume that each $l_h$ admits a unique minimizer, nor that it is achieved by a unique positive solution, of the form $u_i(x)=\alpha_i w(\sigma_i x)$ (with $w$ the unique positive radially decreasing solution of the related single equation problem). This was used in \cite{LinWei}. Instead, our argument is only based upon the decay estimate provided by Lemma \ref{lemma:exponentialdecay}. 

Concerning the uniqueness of ground states, although sufficient conditions that imply uniqueness are already known in the literature, it is still an open problem to completely determine the range of parameters for which completely cooperative systems as \eqref{sub-sys} have a unique solution, corresponding to the least energy positive level. This is known in the $2$ component case with $\lambda_1=\lambda_2$ and $\beta_{12}>\max\{\beta_{11},\beta_{22}\}$, see \cite{WeiYao}. 
For systems of more than $2$ components, we refer to Theorem 1.1 in \cite{ChangLiuLiu}, where it is shown in particular that if $\lambda_i\equiv \lambda$ and $\beta_{ij}$, $i\neq j$ are large and satisfy additional technical assumptions, then any sub-system \eqref{sub-sys} has a unique least energy positive solution. The results in \cite[Section 2]{LiuWang}, in \cite[Section 4]{Sirakov}, and \cite[Proposition 2.1]{Bartsch} suggest that uniqueness should hold also in more general situations.
\end{remark}


\end{document}